\crefname{equation}{}{}
\crefname{enumi}{}{}
\crefname{lem}{Lem\-ma}{Lemmata}
\crefname{section}{Section}{Sections}
\crefname{subsection}{Section}{Sections}
\theoremstyle{plain}
\newtheorem{thm}{Theorem}
\newtheorem{prop}{Proposition}[section]
\newtheorem{lem}[prop]{Lemma}
\newtheorem{cor}[prop]{Corollary}
\newtheorem{defi}[prop]{Definition}
\newtheorem{rmk}[prop]{Remark}
\newtheorem{example}[prop]{Example}
\newtheorem{con}[prop]{Convention}
\newcommand{\R}{\mathbb{R}}
\newcommand{\N}{\mathbb{N}}
\newcommand{\p}{\partial}
\newcommand{\diam}{\textup{diam}}
\newcommand{\A}{\mathcal{A}}
\newcommand{\B}{\mathcal{B}}
\renewcommand{\AA}{\mathbb{A}}
\renewcommand{\S}[1]{\textup{Sym}(\R^d;#1)}
\newcommand{\Ds}{D^{\text{sym}}}
\DeclareMathOperator{\di}{div}
\DeclareMathOperator{\curl}{curl}
\DeclareMathOperator{\dist}{dist}
\DeclareMathOperator{\Per}{Per}
\DeclareMathOperator{\vspan}{span}
\title[Scaling for higher order two-well problems]{On Scaling Properties for a Class of Two-Well Problems for Higher Order Homogeneous Linear Differential Operators}
\author{Bogdan Rai\cb{t}\u a} \address{Department of Mathematics and Statistics, Georgetown University, 3700 O St NW, Washington, DC 20057, United States of America} \email{bogdanraita@gmail.com}
\author{Angkana Rüland} \address{Institute for Applied Mathematics and
Hausdorff Center for Mathematics, University of Bonn, Endenicher Allee
60, 53115 Bonn, Germany} \email{rueland@uni-bonn.de}
\author{Camillo Tissot} \address{Institute for Applied Mathematics,
University of Bonn, Endenicher Allee 60, 53115 Bonn, Germany}
\email{camillo.tissot@uni-bonn.de}
\author{Antonio Tribuzio} \address{Institute for Applied Mathematics,
University of Bonn, Endenicher Allee 60, 53115 Bonn, Germany}
\email{tribuzio@iam.uni-bonn.de}
\begin{document}
	
\begin{abstract}
  We study the scaling behaviour of a class of compatible two-well problems for higher order, homogeneous linear differential operators. To this end, we first deduce general lower scaling
  bounds which are determined by the vanishing order of the symbol of the operator on the unit sphere in direction of the associated element in the wave cone. We complement the lower bound estimates by a
  detailed analysis of the two-well problem for generalized (tensor-valued) symmetrized derivatives with the help of the (tensor-valued) Saint-Venant compatibility conditions. In two spatial dimensions for highly symmetric boundary data (but arbitrary tensor order $m \in \mathbb{N}$) we provide upper bound constructions matching the lower bound estimates. This illustrates that for the two-well problem for higher order operators new scaling laws emerge which are determined by the Fourier symbol in the direction of the wave cone. The scaling for the symmetrized gradient from \cite{CC15} which was also discussed in \cite{RRT23} provides an example of this family of new scaling laws.
\end{abstract}
	
	\maketitle
	
\section{Introduction}
	\label{sec:intro}
	It is the objective of this article to quantitatively study the two-well problem for a class of \emph{higher order}, constant coefficient, linear differential operators generalizing the
	curl and curl curl as the annihilators of the gradient and of the symmetrized gradient, respectively. We seek to illustrate that for this class of operators and a suitable class of wells the
	\emph{maximal vanishing order} of the associated symbols on the unit sphere determines the scaling behaviour of corresponding singular perturbation problems. In particular, we show that such higher order operators may lead to a
	scaling behaviour which does no longer satisfy the typical $\epsilon^{\frac{2}{3}}$ scaling behaviour, first obtained in \cite{KM92,KM94}.
	
	\subsection{The classical two well-problem for the gradient}
	In order to put our results into perspective, let us first recall the classical (compatible) two-well problem for the gradient: The qualitative and quantitative compatible two-well problem for
	the gradient without gauge invariance is a well-studied problem motivated by questions from materials science \cite{BJ92,M1, B}. In its quantitative forms it is a prototypical problem in
	the vector-valued calculus of variations, giving rise to pattern formation problems.
	
	As a first \emph{qualitative} observation, one notes that the only exact solutions to the differential inclusion corresponding to the compatible two-well problem
	\begin{align}
		\label{eq:two_grad}
		\begin{split}
			\nabla v &\in \{A,B\} \mbox{ a.e. in } \Omega, \ v\in W^{1,\infty}_{loc}(\R^d; \R^d),
		\end{split}
	\end{align}
	with $A,B\in \R^{d \times d}$ and $\text{rank}(A-B)=1$, are so-called \emph{simple laminates}
	\cite{BJ92, M1}. These  solutions are locally of the form $v(x)= f(n\cdot x) + \text{affine function}$, where $f: \R \rightarrow \R^d$ and $n\in \mathbb{S}^{d-1}$ is (up to its sign) determined by the relation $A-B = a\otimes n$ for
	some $a\in \R^d$. In particular, for $\lambda \in (0,1)$ and $F_{\lambda}:= \lambda A + (1-\lambda) B$, there are no solutions to \cref{eq:two_grad} with $\nabla v = F_{\lambda}$ in $\R^d\setminus\Omega$ if $\Omega$ is, e.g., bounded.
	
	When viewing the two-well problem energetically, by minimizing elastic energies of the form
	\begin{align*}
		E_{el}( v, \chi):= \int\limits_{\Omega} |\nabla v - \chi_A A - \chi_B B|^2 dx,
	\end{align*}
	where for every $x\in \Omega$ we let $\chi(x) := \chi_A(x) A + \chi_B(x) B \in \{A,B\}$, $\chi_A(x), \chi_B(x)\in\{0,1\}$, among
	\begin{align*}
		\mathcal{D}_{F_{\lambda}} :=\{v \in W^{1,2}_{loc}(\R^d; \R^d): \ \nabla v = F_{\lambda} \mbox{ in } \R^d \setminus \overline{\Omega}\},
	\end{align*}
	a rather different behaviour emerges: Although no exact solutions to \cref{eq:two_grad} exist, due to the lack of lower-semi-continuity it still holds that
	$\inf_{\chi \in L^2(\Omega;\{A,B\})}\inf_{v \in \mathcal{D}_{F_\lambda}}E_{el}(v,\chi)\allowbreak=0$. The boundary conditions enforce oscillations and thus result in infinitely fine-scale structure \cite{BJ92,M1,B}. A
	relaxation leads to the notion of gradient Young measure solutions, a type of generalized solutions (parametrized measures) which describe the oscillatory behaviour of minimizing sequences.
	
	Motivated by the discrepancy between the exact differential inclusion and its energetically quantified version and seeking to study finer properties of the two-well problem, an important  class of models consists of \emph{singular perturbation models}, penalizing fine oscillations. Instead of only minimizing the elastic energy, one here also considers additional (regularizing) surface energies
	\begin{align*}
		E_{surf}(\chi):= \int\limits_{\Omega} |\nabla \chi|,
	\end{align*} 
	and
	\begin{align*}
		E_{\epsilon}(v,\chi) := E_{el}( v, \chi) + \epsilon E_{surf}(\chi).
	\end{align*}
	Here $\int_{\Omega} |\nabla \chi|$ denotes the total variation norm of $\nabla \chi$, the distributional gradient of $\chi \in BV(\Omega;$ $ \{A,B\})$.  Due to the higher order regularization term, the energy does no longer permit arbitrarily
	fine oscillations but introduces a length scale depending on $\epsilon>0$ and thus selects microstructure, e.g., it can distinguish between simple laminate and branching type
	structures \cite{KM92,KM94}. Comparing the regularized singular perturbation problem and the non-regularized ``elastic'' energies, it is particularly interesting to investigate the limit
	$\epsilon \rightarrow 0$ and the scaling behaviour of the singular perturbation problem in $\epsilon>0$ as $\epsilon \rightarrow 0$. In this context, the celebrated results \cite{KM92,KM94}
	assert that the minimal energy does \emph{not} display the scaling behaviour of simple laminates but that of branching type structures (at least in generic, non-degenerate domain geometries),
	see also \cite{CC15}. In \cite{C1} this observation is further strengthened by proving that minimizers are asymptotically self-similar.  The scaling behaviour of these energies thus encodes
	important information on the interaction of the differential constraint (i.e., the condition of dealing with gradients) and the nonlinearity (i.e., the two-well nature of the problem).
	Motivated by problems from materials science, similar $\epsilon^{\frac{2}{3}}$ results have been obtained for generalizations of the differential constraint, including, for instance,
	divergence and symmetrized gradient constraints \cite{CO09, CO12, KW14,CKO99}. However, it was pointed out in the work \cite{CC15} that the scaling behaviour does not \emph{always} have to be
	of the order $\epsilon^{\frac{2}{3}}$: Indeed, in \cite{CC15} in a ``degenerate'' setting (with only one rank-one direction, compared to the generic setting of two rank-one directions) a scaling of the order $\epsilon^{\frac{4}{5}}$ was observed.

	\subsection{The compatible two-well problem for constant coefficient, linear differential operators}
	Motivated by the outlined problems from materials science and the qualitative study of the incompatible two-well problem from \cite{DPPR18} for homogeneous linear differential operators,
	in \cite{RRT23}, we started to systematically study the scaling properties of the compatible two-well problem depending on the class of differential operators at
	hand. To this end, we considered general, homogeneous, constant coefficient, linear differential operators
	\begin{align*}
		\A(D)u:= \sum\limits_{|\alpha|=m} A_{\alpha} \partial^\alpha u,
	\end{align*}
	for $u: \R^d \rightarrow \R^n$, $A_{\alpha} \in \R^{k \times n}$.  The two-well problem from \cref{eq:two_grad} then turns into:  Find $u \in
		L^2_{loc}(\R^d;\R^n)$ such that
	\begin{align}
		\label{eq:two_well_gen}
		\begin{split}
			u& \in \{A,B\} \mbox{ in } \Omega,\\
			\A(D) u &= 0 \mbox{ in } \R^d \ \text{distributionally},\\
			u& = F_{\lambda} \mbox{ in } \R^d \setminus \overline{\Omega},
		\end{split}
	\end{align}
	where $A-B \in \Lambda_{\A} \setminus I_{\A}$ and $F_{\lambda} = \lambda A + (1-\lambda) B$ with $\lambda \in (0,1)$. Here
	\begin{align} \label{eq:wave_cone}
		\Lambda_{\A}:= \bigcup\limits_{\xi \in \mathbb{S}^{d-1}}\ker \AA(\xi),
	\end{align}
	denotes the \emph{wave cone} introduced in \cite{T79,Mur81} and
	\begin{align} \label{eq:super_compatible}
		I_{\A}:= \bigcap\limits_{\xi \in \mathbb{S}^{d-1}} \ker \AA(\xi),
	\end{align}
	is the set of \emph{supercompatible states} introduced in this context in \cite{RRT23}, where we denote the \emph{symbol} of $\A(D)$ by
		\begin{align} \label{eq:symbol}
			\AA(\xi) := \sum_{|\alpha|=m} A_\alpha \xi^\alpha.
                \end{align}
              It is known that the wave cone generalizes the presence of rank-one connections for the curl operator and that it is possible to construct generalized simple laminates from it.
              Indeed, for $h:\R \to \{0,1\}$, $A-B \in \Lambda_{\A}$ and $\xi \in \R^d$ such that $A-B\in \ker \AA(\xi)$, the function
	\begin{align*}
		u(x):= (A-B) h(\xi \cdot x) + B
	\end{align*}
	is a one-dimensional solution to $u \in \{A,B\}$, $\A(D) u = 0$.
	
	Similarly as in the case of the gradient inclusion, it is possible to associate a singular perturbation problem to \cref{eq:two_well_gen}. To this end, we consider
	\begin{align}
		\label{eq:energy_total_gen}
		E_{\epsilon}^{\A}(\chi;F_\lambda):= E_{el}^{\A}(\chi; F_{\lambda}) + \epsilon E_{surf}^{\A}(\chi) :=  \inf\limits_{u\in \mathcal{D}_{F_{\lambda}}^{\A}} \int\limits_{\Omega}|u-\chi|^2 dx+ \epsilon \int\limits_{\Omega} |\nabla \chi|,
	\end{align}
	where $\chi \in BV(\Omega; \{A,B\})$ and
	\begin{align}
		\label{eq:admissible_gen}
		\mathcal{D}_{F_{\lambda}}^{\A} := \{u \in L^2_{loc}(\R^d;\R^n): \  \A(D)u = 0  \mbox{ in } \mathcal{D}'(\R^d), \ u= F_{\lambda} \mbox{ in } \R^d \setminus \overline{\Omega}\}.
	\end{align}
	In the following we will often omit the superscript $\A$ in the notation of the energy $E_\epsilon^\A$ and the set $\mathcal{D}_{F_\lambda}^{\A}$.
	We remark that the outlined setting easily generalizes to operators acting on fields $u:\R^d\to V$ with $V$ being a (real) vector space of dimension $n$.
	In particular, in what follows we will consider the cases $V=\S{m}$ and $V=\R^k\otimes\S{m}$ (see \cref{sec:Saint_venant}).

	Within this setting, in the article \cite{RRT23}, as one of the main results, the first three authors proved that for \emph{first order}, constant coefficient differential operators, the lower
	$\epsilon^{\frac{2}{3}}$ scaling behaviour is generic, provided that the wells are chosen to be \emph{compatible} but not supercompatible.

	\begin{thm}[{\cite[Theorem 1]{RRT23}}] \label{thm:FirstOrderScaling}
          Let $d,n \in \N$. Let $\Omega \subset \R^d$ be open, bounded and Lipschitz.  Let $\A(D)$ be a homogeneous, constant coefficient, linear, \emph{first order} differential operator
          and $A,B \in \R^n$ such that $A-B \in \Lambda_{\A} \setminus I_{\A}$, see \cref{eq:wave_cone} and \cref{eq:super_compatible}. Let $\lambda \in (0,1)$,
            $F_\lambda = \lambda A + (1-\lambda)B$ and let $E^{\A}_{\epsilon}$ be as in \cref{eq:energy_total_gen} above. Then, there exist constants $C=C(\A(D),\Omega,d,\lambda, A, B)>0$ and
          $\epsilon_0 = \epsilon_0(\A(D),\Omega,d,\lambda,A,B)>0$ such that for any $\epsilon \in (0,\epsilon_0)$
		\begin{align*}
			\inf\limits_{\chi \in BV(\Omega;\{A,B\})}  E_{\epsilon}^{\A}(\chi;F_\lambda) \geq C \epsilon^{\frac{2}{3}}.
		\end{align*}
	\end{thm}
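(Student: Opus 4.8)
The plan is to establish the lower bound $\epsilon^{2/3}$ by the now-classical interpolation/commutator strategy pioneered in \cite{KM92,KM94}, adapted to the linear-operator setting of \cite{RRT22}. Write $e := A - B \in \Lambda_{\A}\setminus I_{\A}$ and fix $\xi_0 \in \mathbb{S}^{d-1}$ with $e \in \ker\AA(\xi_0)$; since $\A(D)$ is first order, $\AA(\xi)$ depends linearly on $\xi$. The three competing ingredients are: (i) the elastic energy $E_{el}^{\A}(\chi;F_\lambda) = \inf_{u\in\mathcal D_{F_\lambda}}\|u-\chi\|_{L^2(\Omega)}^2$, which controls how far an $\A$-free field attaining the affine boundary data can be from the two-valued field $\chi$; (ii) the surface energy $\epsilon\int_\Omega|\nabla\chi|$, which penalizes the perimeter of the phase boundary; and (iii) a \emph{rigidity/non-attainment} mechanism — because $e\notin I_{\A}$, there is a direction $\xi_1$ (with $\AA(\xi_1)e\neq 0$) transverse to $\xi_0$ along which the differential constraint forbids oscillation, so $\chi$ cannot simply equal an exact solution. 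First I would decompose $u - \chi$ in Fourier space and split frequencies into those nearly parallel to $\xi_0$ (where $\chi$, seen as a laminate-like object, is cheap for the elastic term but the surface term is activated if the phase actually oscillates there) and the transverse frequencies (where $\AA(\xi)e$ is bounded below, forcing $u$ to deviate from $\chi$ and thereby charging $E_{el}^{\A}$).

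The core estimate is a one-dimensional (or slab) interpolation inequality of the following shape: for any test slab in $\Omega$ of width comparable to a length scale $L$, and any admissible pair $(u,\chi)$,
\begin{align}
\label{eq:interp}
\|u-\chi\|_{L^2}^2 \;\gtrsim\; \frac{c_1(e,\xi_0,\xi_1)\,\mathrm{Vol}}{\big(L^{-1} + \text{(frequency content of }\chi)\big)} \;-\; \text{(boundary-layer corrections)},
\end{align}
balanced against $\epsilon\int|\nabla\chi| \gtrsim \epsilon\,L^{-1}\,\mathrm{Vol}$ when $\chi$ genuinely transitions between the wells inside the slab, while if $\chi$ does \emph{not} transition there, the affine boundary condition $u = F_\lambda = \lambda A + (1-\lambda)B$ on $\R^d\setminus\overline\Omega$ forces $u-\chi$ to be of order $|e|$ on a definite fraction of the slab, again charging $E_{el}^{\A}$ at order $\mathrm{Vol}$. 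Optimizing the resulting bound $E_\epsilon^{\A}\gtrsim \min\{L, \epsilon/L\}$ over $L$ yields $L\sim\epsilon^{1/2}$? — no: one instead gets, after the correct three-term Young-type balance $E_{el}\gtrsim \delta^2/L$ from the transverse part, the elastic cost $\gtrsim L$ from the parallel part where $\delta$ is the transition thickness, and $E_{surf}\gtrsim \epsilon/\delta$, the optimum over $\delta$ and $L$ of $\delta^2/L + L + \epsilon/\delta$, giving the scaling $\epsilon^{2/3}$ with the familiar choice $\delta\sim\epsilon^{1/3}$, $L\sim\epsilon^{1/3}$. I would localize this to a fixed interior cube of $\Omega$ (which exists by openness) so that the affine data enters only through trace/extension estimates, harvesting the constant's dependence on $\Omega$ through the Lipschitz character.

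The main obstacle I anticipate is \emph{handling the non-locality of the constraint $\A(D)u=0$ together with the hard constraint $u\in\{A,B\}$ on $\Omega$}: unlike in the scalar gradient case, there is no potential to pass to, so the lower bound must be extracted purely from the Fourier symbol. Concretely, one needs a quantitative Korn-type / ellipticity estimate: for frequencies $\xi$ with $\mathrm{dist}(\xi/|\xi|,\{\pm\xi_0\})\ge\theta$, the projection of $\hat u(\xi)$ onto $e$ is suppressed by a factor controlled from below in terms of the smallest nonzero singular value of $\AA(\xi)|_{\mathrm{span}(e)}$, uniformly for such $\xi$ — this is where $e\notin I_{\A}$ is used, and it is exactly the place where the argument of \cite[Theorem 1]{RRT22} does the real work. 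I would invoke that structural lemma (it is the engine behind the theorem being restated here, so effectively it is available) and then the remaining steps are the slab decomposition, the elementary three-term Young inequality, and the bookkeeping of boundary layers near $\partial\Omega$, all of which are routine. A secondary, purely technical nuisance is that $\chi\in BV$ rather than smooth, handled by a standard mollification together with lower semicontinuity of the total variation, so that \eqref{eq:interp} may be applied to a smooth approximant without loss.
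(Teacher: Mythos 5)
Your proposal captures the right flavor — Fourier decomposition, symbol coercivity away from the wave cone, balancing elastic against surface energy — but it has two genuine gaps, one structural and one quantitative, and a third softer issue.

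\textbf{Circularity.} You write that you would ``invoke that structural lemma (it is the engine behind the theorem being restated here, so effectively it is available).'' That is not available: the structural lemma you describe \emph{is} the content of \cite[Theorem 1]{RRT22} (or, more precisely, of the multiplier estimate \cite[Corollary~3.2]{RRT22} that the present paper records as \eqref{eq:LowerBoundRRT}, together with the Fourier-regime analysis in Proposition~\ref{prop:key_estimate}). You cannot cite the theorem to prove the theorem. What actually has to be shown is (a) that for any competitor $\chi = f(A-B)+F_\lambda$ with $f$ extended by zero,
\begin{align*}
E^{\A}_{\epsilon}(\chi;F_\lambda) \;\geq\; C\left(\int_{\R^d}\Big|\AA\!\left(\tfrac{\xi}{|\xi|}\right)(A-B)\Big|^2|\hat f|^2\,d\xi \;+\; \epsilon\int_\Omega|\nabla f|\right),
\end{align*}
which relies on an orthogonal projection argument onto $\ran\AA(\xi)^\ast$ (not a Korn-type inequality in physical space), and (b) a low-frequency estimate near the zero set $V=\{\xi:\AA(\xi)(A-B)=0\}$. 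Step (b) is the part you do not address at all: the paper's Lemma~\ref{lem:slicing} shows that $\int_{\dist_V(\xi)\le\alpha}|\hat f|^2$ is a small fraction of $\|\hat f\|_2^2$ precisely because $V$ is a proper linear subspace (here the nullspace of the linear map $\xi\mapsto\AA(\xi)(A-B)$, which is where $A-B\notin I_{\A}$ is used) and $f$ has compact support. Your ``boundary-layer corrections'' gesture at this but do not supply the mechanism; without it the argument does not close, since near $V$ the elastic energy provides no coercivity whatsoever.

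\textbf{The optimization is misformulated.} You propose a three-term balance $\delta^2/L + L + \epsilon/\delta$ with $\delta, L$ ``transition thickness'' and ``slab width'', and claim the optimum is $\epsilon^{2/3}$ at $\delta\sim L\sim\epsilon^{1/3}$. Plugging in gives $\epsilon^{1/3}+\epsilon^{1/3}+\epsilon^{2/3}\sim\epsilon^{1/3}$, not $\epsilon^{2/3}$. More fundamentally, $\delta$ and $L$ are features of the (unknown) competitor, not dials you get to turn in a lower-bound proof — so ``optimizing over $\delta$ and $L$'' is a category error. The correct structure, as in Proposition~\ref{prop:key_estimate}/Proposition~\ref{thm:LowerScalingUnionVS}, has exactly \emph{one} free parameter $\eta$ (a frequency cutoff that you, the prover, choose), and gives an inequality valid for every competitor:
\begin{align*}
\|\hat f\|_{L^2}^2 \;\leq\; C\Big(\eta^{2}\,\tilde E_{el}(f) + \eta^{-1}\big(\tilde E_{surf}(f)+\Per(\Omega)\big)\Big),
\end{align*}
with $L=1$ since the symbol is linear. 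Since $\|\hat f\|_{L^2}^2 = \|f\|_{L^2}^2 \gtrsim \min\{\lambda,1-\lambda\}^2|\Omega|$, choosing $\eta=\epsilon^{-1/3}$ forces $\tilde E_{el}+\epsilon\tilde E_{surf}\gtrsim\epsilon^{2/3}$. Your $\min\{L,\epsilon/L\}$ heuristic (and your self-correction of it) never arrives at this structure.

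\textbf{Physical-space slabs vs.\ Fourier regimes.} Your slab decomposition is closer in spirit to the original \cite{KM92,KM94} gradient argument. In the general $\A$-free setting there is no scalar potential to slice, so the paper works entirely on the Fourier side: low frequencies near $V$ (controlled by the slicing lemma and the finite measure of $\Omega$), intermediate frequencies away from $V$ (controlled by symbol coercivity and $\tilde E_{el}$), and high frequencies (controlled by the $BV$ bound, Proposition~\ref{prop:key_estimate}~\ref{itm:c}). A physical-space slab argument could in principle be made to work, but it would have to reconstruct a Korn-type rigidity estimate for $\A$-free fields on slabs, which is considerably harder than the Fourier route and is not what you've supplied.

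In summary: the proposal is not a proof. The two things to fix are (1) replace the appeal to the theorem-as-lemma with the actual projection estimate $E_{el}\gtrsim\int|\AA(\xi/|\xi|)(A-B)|^2|\hat f|^2$ plus the slicing estimate near $V$, and (2) replace the $(\delta,L)$ balance with a single cutoff $\eta$, used to bound $\|f\|_2^2$ from above and combined with the trivial lower bound $\|f\|_2^2\gtrsim 1$.
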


	While our result illustrates that the lower bounds from the two-well problem for the gradient persist for general \emph{first order} operators, our argument strongly relied on the \emph{linearity} of the associated first order symbols and the linear structure of the null set of
	\begin{align}\label{eq:symbol(A-B)}
		\xi\in\R^{d}\mapsto \AA(\xi)(A-B).
	\end{align}
	 Revisiting the example from \cite{CC15} (which involves a second order differential operator) we could prove
	that the vanishing order of \cref{eq:symbol(A-B)} gives rise to a \emph{different}, non-$\epsilon^{\frac{2}{3}}$-scaling behaviour which had first been identified in \cite{CC15}.  It however remained
	an open problem to investigate the behaviour of \emph{more general, higher order operators}.  In this context, central questions are:
	\begin{itemize}
		\item[(Q1)] What scaling behaviour can emerge for higher order, homogeneous linear operators for their associated singularly perturbed compatible two-well problems?
		\item[(Q2)] Can one identify structural conditions (for $\A(D)$ and $A-B \in \Lambda_{\A}\setminus I_{\A}$) giving rise to certain scaling behaviour?
	\end{itemize}
	In the present article, we begin to systematically investigate these questions by considering a rather general class of homogeneous, linear differential operators and by deducing lower bound
	estimates for these.  These lead to non-standard, non-$\epsilon^{\frac{2}{3}}$-lower bound scaling behaviour which is directly associated with the structure of the symbol \cref{eq:symbol(A-B)}. For a particular class of operators generalizing the symmetrized gradient and a class of particular wells and boundary data we complement these
	lower bounds with matching upper bounds proving their optimality.

	\subsection{Outline of the main results}
	\label{sec:main}
	Continuing our investigation from \cite{RRT23}, it is our objective to investigate the scaling properties of \emph{higher order compatible two-well problems}. To this end, on the one hand, we
	study \emph{specific families} of such operators in detail. On the other hand, we systematically investigate the lower bound scaling behaviour for a rather general class of constant coefficient,
	homogeneous, linear differential operators.  Our results and in particular the specific example classes illustrate that, in general, for higher order, homogeneous, linear differential
	operators \emph{different scaling behaviour} may arise than for first order operators. More precisely, we show that the scaling behaviour for our classes of operators depends in a precise way
	on the \emph{maximal vanishing order} of the associated symbol restricted to the unit sphere. This generalizes and systematizes the example from \cite[Section 3.5]{RRT23} which was based on the results from \cite{CC15} and provides
	further, new scaling laws of higher order.
	
	In order to explain this, let us describe the precise set-up of our problem.  In what follows, we will first focus on a family of homogeneous, higher-order linear differential operators,
	generalizing the curl (and curl curl) operators as well as their associated potentials. 
	In a second step, we will then discuss a rather general class of lower scaling bounds for which the estimates will be determined by the \emph{maximal vanishing order} of the associated symbol \cref{eq:symbol(A-B)} restricted to the unit sphere.

	\subsection{Scaling results for generalized symmetrized gradients}		
	Let us begin by considering the case of the curl (and curl curl) operator and its generalizations. It is the content of Poincar\'e's Lemma that for a simply connected domain a \emph{one-tensor
		field} (i.e., a vector field) is a \emph{gradient}, if and only if its curl vanishes. Similarly, it is well-known and often used in geometrically linearized elasticity that a \emph{symmetric two-tensor
		field} (i.e., a symmetric matrix field) on a simply connected domain is a \emph{symmetrized gradient}, if and only if it satisfies the Saint-Venant compatibility conditions. More precisely, on the
	whole space, a necessary and sufficient condition for a tensor field $u: \R^{d} \to \R^{d \times d}_{\text{sym}}$ to be a symmetrized derivative associated with some $v: \R^d \to \R^d$, i.e.,
	\begin{align*}
		u=e(v):= \Ds v := \frac{1}{2}(\nabla v + (\nabla v)^t),
	\end{align*}
	is given by the vanishing of the $\curl \curl$ operator, or equivalently the validity of the system of differential equations given by
	\begin{align*}
		\p_{kl}^2 u_{ij} + \p_{ij}^2 u_{kl} - \p_{jk}^2 u_{il} - \p_{il}^2 u_{jk} = 0, \ i,j,k,l \in \{1,\dots,d\}.
	\end{align*}
	
	\emph{Higher order symmetric tensors} play a major role in inverse problems and tensor tomography. In the Euclidean setting, the ray transform is given by
	\begin{align*}
          & I_m: C_c^{\infty}(\R^d, \S{m}) \rightarrow C^{\infty}(T \mathbb{S}^{d-1}), \\
          & I_m f(x,\xi):=\int_{\R} \sum_{i_1,\dots,i_m=1}^d  f_{i_1\dotsi i_m}(x + t \xi) \xi^{i_1}\dots \xi^{i_m}  dt.
	\end{align*}
	Seeking to recover the higher rank tensor $f$ from measurements of $I_m f$, a (generalized) Helmholtz type decomposition into a potential and solenoidal part plays a major role (see, for instance, \cite{PSU14,IM19,PSU23, Sha94} dealing with geometric versions of the ray transform). Indeed, it is at best the solenoidal part of $f$ which can be reconstructed from the knowledge of $I_m$, while the potential part is characterized by the generalized Saint-Venant compatibility conditions and cannot be inferred from the measurements of $I_m f$. Hence, additional structural conditions, such as the two-valuedness, of the potential part may be imposed and investigated.
	
	Following \cite[Chapter 2]{Sha94}, in
	this article as a model class of higher order differential operators we will study such generalizations of the Saint-Venant compatibility condition for \emph{higher rank tensors fields}
	\begin{align*}
		u : \R^d \to \S{m}:=\{M \in (\R^d)^{\otimes m}: M \mbox{ is symmetric}\}
	\end{align*}
	and their interaction with the nonlinear constraint given by the two-well problem.
	More precisely, for $u : \R^d \to \S{m}$, we consider the $m$-th order differential operator $\A(D)$ with
	\begin{align}
		\label{eq:compat_op}
		[\A(D)u]_{i_1 j_1 i_2 j_2 \dots i_m j_m} := \alpha_{i_1 j_1} \circ \alpha_{i_2 j_2} \circ \dots \circ \alpha_{i_m j_m} \Big( \p_{j_1 \dots j_m}^m u_{i_1 \dots i_m} \Big).
	\end{align}
	Here, by $\alpha_{i_k i_l}$ we denote the \emph{anti-symmetrization operator} in the indices $i_k, i_l$ by
	\begin{align*}
		\alpha_{i_k i_l}(M_{i_1 \dots i_k \dots i_l \dots i_{2m}}) := \frac{1}{2} \Big( M_{i_1 \dots i_{2m}} - M_{i_1 \dots i_l \dots i_k \dots i_{2m}} \Big).
	\end{align*}
        Now the differential operator $\mathcal{A}(D)$ generalizes the Saint-Venant compatibility conditions in the sense that it holds (for compactly supported maps) that $\A(D)u = 0$ for
	$u: \R^d \to \S{m}$ if and only if $u$ is the symmetrized derivative of a tensor valued map \cite[Theorem 2.2.1]{Sha94}. In this context, we say that a mapping $u: \R^d \to \S{m}$ is a
	\emph{symmetrized derivative}, if there is $v: \R^d \to \S{m-1}$ such that $u = \Ds v$, cf. \cref{eq:potential} in \cref{sec:tensors}.
	We consider some specific examples of this set-up in \cref{ex:d-and-m} in \cref{sec:Saint_venant}.

      Generalizing the $\epsilon^{\frac{2}{3}}$-scaling result for the gradient inclusion from \cite{KM92,KM94} and the observations made in \cite{CC15} that for $\A(D) = \curl \curl$ there are instances of an $\epsilon^{\frac{4}{5}}$- scaling, we prove the following analogous result to \cref{thm:FirstOrderScaling} for this model class of operators.
        
	\begin{thm}[Symmetrized derivative]
		\label{thm:scaling_2D_new}
		Let $d,m \in \N, d \geq 2$ and $l \in \N^d$. Let $\Omega \subset \R^d$ be an open, bounded Lipschitz domain. Let $E_{\epsilon}^{\A}(\chi;F)$ be as above in \cref{eq:energy_total_gen}  with the operator $\A(D)$ given in \cref{eq:compat_op}. Then the following scaling results hold:
		\begin{itemize}
			\item \underline{Sharp scaling bounds for $d=2$, $\lambda=\frac{1}{2}$}. Let $d=2$, $A-B = e_1^{\odot l_1} \odot e_2^{\odot l_2}$ (see \cref{eq:sym_prod} for the symmetric
			tensor notation) such that $l_1 + l_2 =m$ and $F= \frac{1}{2}A+\frac{1}{2}B$.  Moreover let $\Omega = (0,1)^2$. Then there exist constants
			$C>1$ and $\epsilon_0 > 0$ (depending on $m$) such that for $L:= \max\{l_1, l_2\}$ and for any $\epsilon \in (0,\epsilon_0)$
			\begin{align*}
				C^{-1} \epsilon^{\frac{2L}{2L+1}} \leq \inf\limits_{\chi \in BV(\Omega;\{A,B\})} E_{\epsilon}^{\A}(\chi;F) \leq C \epsilon^{\frac{2L}{2L+1}}.
			\end{align*}
			\item \underline{Lower scaling bounds for $d\geq 2$}. Let $d\geq 2$, $A-B = e_1^{\odot l_1} \odot e_2^{\odot l_2} \odot \cdots \odot e_d^{\odot l_d}$ such
			that $\sum_{j = 1}^{d} l_{j} = m$ and $F_{\lambda}:= \lambda A + (1-\lambda ) B$ for some $\lambda \in (0,1)$. Then there exist $C>0$ and $\epsilon_0>0$ (depending on $d, m, \Omega$ and $\epsilon_0$ also depending on $\lambda$) such
			that for $L:=\max\limits_{j\in\{1,2,\dots,d\}}l_j$ and for any $\epsilon \in (0,\epsilon_0)$
			\begin{align*}
				C \min\{1-\lambda,\lambda\}^2\epsilon^{\frac{2L}{2L+1}} \leq \inf\limits_{\chi \in BV(\Omega;\{A,B\})} E_{\epsilon}^{\A}(\chi; F_{\lambda}) .
			\end{align*}
		\end{itemize}
	\end{thm}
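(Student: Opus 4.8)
The plan is to split the argument into the lower and upper bounds, treating the lower bound first since it is needed (at least its structure) in both parts of the statement. For the \emph{lower bound}, the key quantitative input is the vanishing order of the symbol $\AA(\xi)(A-B)$ on the unit sphere. First I would compute, for the Saint-Venant type operator $\A(D)$ in \eqref{eq:compat_op} and the rank-one-type tensor $A-B = e_1^{\odot l_1} \odot \cdots \odot e_d^{\odot l_d}$, the polynomial $\xi \mapsto \AA(\xi)(A-B)$ explicitly: after anti-symmetrization, this should factor (up to nonzero constants) as a product of terms of the form $(\xi_i \xi_j' - \xi_j \xi_i')$-type expressions, and crucially it vanishes to order exactly $L = \max_j l_j$ in a single distinguished direction (namely $\xi = e_k$ where $l_k = L$). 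This is the structural fact that ties the scaling exponent $\frac{2L}{2L+1}$ to the geometry of the well. Once this vanishing order is established, I would run the standard Fourier-space lower bound machinery (as in \cite{KM92,KM94} and as used for \cref{thm:FirstOrderScaling} in \cite{RRT22}): one interpolates between the elastic energy $\|u-\chi\|_{L^2}^2$, which forces $u$ (hence the relevant Fourier modes of $\chi$) to nearly satisfy $\A(D)u = 0$, and the surface energy $\epsilon\|\nabla\chi\|$, which controls high frequencies. The term $\min\{\lambda,1-\lambda\}^2$ arises because the boundary datum $F_\lambda$ forces a nontrivial oscillation of $\chi$ of amplitude proportional to $\lambda(1-\lambda)$ times $|A-B|$, contributing the constant in front. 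The main technical point is to quantify how badly a function can fail to be $\A(D)$-free yet be close to a function with boundary value $F_\lambda$; the degenerate direction with vanishing order $L$ is exactly the bottleneck that changes the exponent from $\frac23$ to $\frac{2L}{2L+1}$.

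For the \emph{upper bound} in $d=2$, I would construct a competitor $\chi$ together with an admissible field $u \in \mathcal{D}_{F_\lambda}^{\A}$ via a branching/self-similar laminate construction refining towards the boundary $\p\Omega = \p(0,1)^2$, adapted to the fact that the operator has order $m$ and the relevant ``rank-one'' direction is degenerate of order $L$. Concretely: on $\Omega = (0,1)^2$ I would lay down a laminate of the two wells in the direction $e_j$ with $l_j < L$ (the ``cheap'' lamination direction — there $A-B$ lies in $\ker\AA(e_j)$ and the laminate is a genuine solution of the inclusion), and then refine the layering near the two faces orthogonal to $e_k$ (where $l_k = L$) so as to match the affine datum $F_\lambda$. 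The construction of the corrector $u$ — the field realizing a given piecewise-constant $\chi$ up to controlled elastic error — requires producing, for each refinement generation, a compactly supported tensor field whose symmetrized-derivative-deviation is small; here one exploits Poincaré-type / potential representations (there exists $v:\R^2 \to \S{m-1}$ with $\Ds v$ close to the target, because in two dimensions the Saint-Venant system has a single scalar ``Airy-type'' potential). Optimizing the refinement rate $\theta$ of the branching against the number of generations yields, by the usual argument, $E_{el} + \epsilon E_{surf} \lesssim \epsilon^{\frac{2L}{2L+1}}$.

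The hard part — and the genuinely new ingredient relative to \cite{RRT22} — is twofold. On the lower bound side, it is establishing the \emph{exact} vanishing order $L$ of $\AA(\xi)(A-B)$ on $\mathbb{S}^{1}$ (and, for $d \ge 2$, that the minimum over directions of the vanishing order is $L$) and feeding this cleanly into the interpolation inequality, since unlike the first-order case the symbol is no longer linear and the null set of \eqref{eq:symbol(A-B)} is a higher-order algebraic variety. On the upper bound side in $d=2$, the obstacle is engineering the branching construction so that each generation's corrector $u$ is \emph{exactly} $\A(D)$-free (not just approximately) and has the right support and energy budget; this is where the two-dimensional Airy-type scalar potential for $m$-tensors, built from the results in \cite[Chapter 2]{Sha94}, is essential, and where the precise bookkeeping of how the order-$L$ degeneracy inflates the interfacial cost per generation must be carried out to land on the matching exponent. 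I would expect the degenerate-direction analysis of the symbol to be the principal conceptual hurdle, with the rest following the now-standard template once that algebraic fact is in hand.
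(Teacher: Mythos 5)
Your lower bound strategy is essentially the one the paper follows: factor the symbol, establish its vanishing order, and run a Fourier-space interpolation. One slip to note: the relevant quantity is the \emph{maximal} vanishing order over the whole zero set $V = \bigcup_{j: l_j \neq 0}\vspan(e_j)$, not the minimum --- the uniform coercivity bound $|\AA(\xi)(A-B)|^2 \gtrsim \dist_V(\xi)^{2L}$ must hold near the most degenerate direction, whence $L = \max_j l_j$ (this is Lemma~\ref{lem:LowerBoundMultipl} and Remark~\ref{rmk:opt}). The paper also needs a slicing estimate (Lemma~\ref{lem:slicing}, Proposition~\ref{prop:key_estimate}) to control low-frequency mass near a finite union of linear subspaces, replacing the single-hyperplane argument from the first-order case; you gesture at this with ``standard machinery,'' but the localization near a union of subspaces is precisely the technical novelty on the lower-bound side.

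The upper bound is where your proposal has a genuine error: you have the lamination direction and the branching direction reversed. You propose laminating with normal $e_j$ where $l_j < L$ and refining towards the faces orthogonal to $e_k$ where $l_k = L$. The paper does the opposite: laminate with normal $e_k$, the direction where $l_k = L$ is \emph{largest}, and refine towards the faces orthogonal to the complementary coordinate. The reason is quantitative. In a unit cell of size $l\times h$, the elastic transition-layer cost scales like $l^{2\ell+1}/h^{2\ell-1}$, where $\ell$ is the vanishing order of $\AA(\xi)(A-B)$ at the lamination normal; a \emph{deeper} degeneracy (larger $\ell$) makes bending the laminate cheaper, and after optimizing the branching one obtains $\epsilon^{2\ell/(2\ell+1)}$. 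Taking $\ell = L = \max_j l_j$ is what matches the lower bound, whereas laminating in a direction with $l_j < L$ yields the strictly worse exponent $\epsilon^{2l_j/(2l_j+1)} > \epsilon^{2L/(2L+1)}$ and does not close the gap. Moreover, for the extremal case $A-B = e_1^{\odot m}$ (so $l_1 = L = m$, $l_2 = 0$) there is no direction $e_j$ at all with $0 < l_j < L$ --- $A-B \in \ker\AA(\xi)$ forces $\xi \in \vspan(e_1)$ --- so your prescription cannot even be started there. The paper instead builds the unit cell directly for $e_1^{\odot m}$ (Lemma~\ref{lem:UnitCell}), laminating in $x_1$ and refining in $x_2$, via a tensor potential $v$ with $u = \Ds v$, iteratively setting $\tilde u_k = 0$ for $k = 1,\dots,m-1$ so that only the component $\tilde u_m$ carries elastic energy; the intermediate cases $e_1^{\odot l}\odot e_2^{\odot(m-l)}$ are then handled by an order-reduction identity (Lemma~\ref{lem:Intermediate}) feeding back into the same $e_1^{\odot L}$ construction, not by switching to a shallower lamination direction.
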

	
	Let us comment on the assertions of the theorem: Firstly, we remark that the scaling laws in \cref{thm:scaling_2D_new} represent a \emph{new} class of scaling laws for the two-well
	problem for higher order, linear differential operators. Secondly, this class of operators systematizes the observations from \cite{CC15} and \cite[Section 3.4]{RRT23} in which a
	non-$\epsilon^{\frac{2}{3}}$-scaling behaviour emerges and which is determined by the \emph{vanishing order of the associated symbols} on the unit sphere.
        We also mention that for this class of differential operators modelled on generalizations of the gradient and symmetrized gradient, our results from \cref{thm:scaling_2D_new} do not
	cover all possible settings of $A,B $ such that $A-B \in \Lambda_{\A}$. Indeed, our choices of $A-B$ form a basis of the wave cone and yield the scaling behaviour on these basis
	vectors. This however \emph{does not} yield the scaling behaviour of a general element in the wave cone. We seek to return to this in future work. 
	Finally, let us comment on the constraints in the upper bound constructions in \cref{thm:scaling_2D_new}. The condition that $\lambda = \frac{1}{2}$ provides strong symmetry properties. In particular, it allows for both odd and even reflections of certain building block constructions (see \cref{lem:UnitCell}). For general $\lambda \in (0,1)\setminus \{\frac{1}{2}\}$ only weaker replacements (of odd reflections) are available which do not allow for an immediate generalization to an arbitrary choice of $\lambda \in (0,1)$ and general tensor order. We further remark that providing matching upper bound constructions in higher dimensions $d>2$ leads to new technical difficulties even in the case of the model operators. Indeed, in this case one would need to ensure the validity of the prescribed Dirichlet data on \emph{all} sides of the cube. The construction given in this article only achieves the boundary datum on four sides, and would thus have to be modified correspondingly to achieve an admissible deformation in higher dimensions. To account for this, ``rotation-type'' arguments have been introduced in \cite{RRT23,RTrib22} to match the Dirichlet data on all faces. It is expected that -- at the expense of additional technicalities -- similar ideas could also be of relevance in our context for the model operators under consideration. Finding matching upper bound constructions under the given strong Dirichlet conditions for general operators \emph{beyond} model settings however remains a major challenge.

        We show in \cref{lem:LowerScalingDiv} that a similar result as stated in \cref{thm:scaling_2D_new} holds for the $m$-th order divergence, which is defined in \cref{eq:DivergenceOp}.

	\subsection{Lower bounds for a class of linear, homogeneous differential operators}
	Building on the specific example of the higher order curl and its potential, the generalized symmetrized derivative, we seek to study the scaling behaviour of more general two-well problems for homogeneous, constant coefficient,
	linear differential operators. To this end, we systematically deduce lower scaling bounds for a rather large class of linear operators. As in the previous section, we consider singular perturbation problems as in \cref{eq:energy_total_gen},
	\cref{eq:admissible_gen}. A crucial role to determine lower bounds is played by the polynomial $p(\xi) = |\AA(\xi)(A-B)|^2$, cf. \cite[Corollary 3.2]{RRT23}.	
	To that end, we introduce the \emph{maximal vanishing order on the unit sphere} of the non-negative, homogeneous polynomial $p:\R^d \to \R$.
	
	\begin{defi}[Maximal vanishing order on the unit sphere]
		\label{defi:index}
		Let $p \in \R[\xi]$ be a non-negative, 2m homogeneous polynomial. Let $V$ denote the zero set of $p$. We then define the \emph{maximal vanishing order} $L[p]$ of $p$ as
		\begin{align*}
			L[p]:= \min\left\{\ell \in \N: \ \inf\limits_{\xi \in \mathbb{S}^{d-1} \setminus V} \frac{p(\xi)}{\dist_V(\xi)^{2\ell}}>0 \right\},
		\end{align*}
                where we denote the distance function to $V$ by
                  \begin{align}
                  \label{eq:dist}
                    \dist_V(\xi) := \inf \{|\xi- \zeta|: \zeta \in V\}.
                  \end{align}                  
	\end{defi}
	
	With this notion in hand, we prove corresponding lower bound scaling estimates which hold for a large class of homogeneous, constant coefficient, linear differential operators $\A(D)$.
	
	\begin{thm}\label{thm:lower_bound_p}
		Let $d, m \in \N, d \geq 2$. Let $\Omega \subset \R^d$ be an open and bounded Lipschitz domain.  Let $\A(D)$ be a homogeneous, constant coefficient, linear differential operator and $A,B \in \R^n$ such that
		$A-B \in \Lambda_{\A} \setminus I_{\A}$. Let $p(\xi)=|\AA(\xi)(A-B)|^2$ have the maximal vanishing order equal to $L \leq m$, cf. \cref{defi:index} and further assume $V =  p^{-1}(0)$ to be a finite union of vector spaces. For $\lambda \in (0,1)$ consider $F_\lambda = \lambda A + (1-\lambda)B$ and let $E^{\A}_{\epsilon}$ be as in
		\cref{eq:energy_total_gen} above. Then, there exist constants $C=C(\A(D), A, B, d, m, \Omega, V)>0$ and $\epsilon_0 = \epsilon_0(\A(D),\lambda,A,B,d,m,\Omega,V) > 0$ such that for any $\epsilon \in (0,\epsilon_0)$
		\begin{align*}
			\inf\limits_{\chi \in BV(\Omega;\{A,B\})}  E_{\epsilon}^{\A}(\chi;F_\lambda) \geq C \min\{1-\lambda,\lambda\}^2 \epsilon^{\frac{2L}{2L+1}}.
		\end{align*}
	\end{thm}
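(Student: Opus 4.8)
The plan is to reduce the lower bound to a Fourier-space estimate exploiting the vanishing structure of $p(\xi) = |\AA(\xi)(A-B)|^2$ near its zero set $V$, following the strategy used for the first-order case in \cite{RRT22} (specifically their Corollary 3.2) but now replacing the linear control of $\AA(\xi)(A-B)$ near $V$ by the quantitative bound $p(\xi) \gtrsim \dist_V(\xi)^{2L}$ guaranteed by \cref{defi:index}. First I would fix $\chi \in BV(\Omega;\{A,B\})$ and $u \in \mathcal{D}_{F_\lambda}^{\A}$ and normalize so that $A-B$ plays the role of the "amplitude direction": writing $\chi = \theta A + (1-\theta)B$ for $\theta\in BV(\Omega;\{0,1\})$ and $w := u - F_\lambda$, one has $\A(D)w = 0$ in $\mathcal D'(\R^d)$, $w$ is supported in $\overline\Omega$, and $w - (\theta - \lambda)(A-B) = u - \chi$ on $\Omega$. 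The elastic energy then controls $\|w - (\theta-\lambda)(A-B)\|_{L^2(\Omega)}^2$ from below, while the surface energy controls $|D\theta|(\Omega)$.

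The core estimate is an interpolation/bootstrap on the $(A-B)$-component of $w$ in frequency. Let $g := \langle w, \tfrac{A-B}{|A-B|^2}\rangle$ be the scalar field extracting the $(A-B)$-amplitude (so $g = \theta-\lambda$ on $\Omega$ up to the $L^2$ error from the elastic term, and $g$ is supported in $\overline\Omega$). The constraint $\AA(\xi)\hat w(\xi) = 0$ forces, roughly, that $\hat g(\xi)$ is small in the directions where $\AA(\xi)(A-B)$ is large: precisely, one derives $\int_{\R^d} p(\xi)|\hat g(\xi)|^2\,d\xi \lesssim E_{el}$ (the elastic error feeds the mismatch between $w$ and its $(A-B)$-multiple), hence $\int_{\R^d}\dist_V(\xi)^{2L}|\hat g(\xi)|^2\,d\xi \lesssim E_{el}$ using the maximal vanishing order. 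On the other hand, $g$ is close to a nonzero-mean characteristic-type function on $\Omega$ (its average is $\theta_\Omega - \lambda$, and the boundary datum $F_\lambda$ being a genuine convex combination, i.e. $\lambda\in(0,1)$, forces a nontrivial contribution of size $\gtrsim\min\{\lambda,1-\lambda\}$ in $L^2$ — this is where the $\min\{\lambda,1-\lambda\}^2$ factor enters and where the hypothesis $A-B\notin I_{\A}$ is used to guarantee $V\neq\R^d$). Combining: decomposing frequency space into $\{\dist_V(\xi)\le\rho\}$ and its complement, the low-frequency (near-$V$) part of $\|g\|_{L^2}^2$ is controlled by $\rho^{2L}\cdot(\text{BV energy of }g)$-type quantity via the structure of $V$ as a finite union of subspaces (a tubular-neighbourhood Plancherel estimate: mass of $\hat g$ in a $\rho$-tube around a linear subspace is controlled by $\rho^{\dim V^\perp}\|g\|_{L^1}^2$ plus, after one integration by parts along $V^\perp$, by $\rho^{2}|Dg|$-type terms — the finite-union-of-subspaces hypothesis makes this a finite sum of such model estimates), while the high-frequency part is controlled by $\rho^{-2L}E_{el}$. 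Optimizing over $\rho$ balances $\rho^{2L}\epsilon^{-1}E_{surf}$-type and $\rho^{-2L}E_{el}$-type terms against the fixed lower bound $\min\{\lambda,1-\lambda\}^2$, and after also bounding $E_{surf}$ trivially (or absorbing), yields $E_{el} + \epsilon E_{surf} \gtrsim \min\{\lambda,1-\lambda\}^2\epsilon^{\frac{2L}{2L+1}}$ once $\epsilon<\epsilon_0$.

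The main obstacle I expect is the precise form of the tubular-neighbourhood estimate around $V$: one needs that for a function $g$ supported in a bounded set, the $L^2$ mass of $\hat g$ in the $\rho$-neighbourhood of a linear subspace $W$ is bounded by $C\big(\rho^{\operatorname{codim} W}\|g\|_{L^1(\Omega)}^2 + \rho^{\operatorname{codim}W}\,\rho^2\,|Dg|(\Omega)^2\big)$ or some interpolated variant — the correct powers and the way the BV norm of $\chi$ (hence $\epsilon E_{surf}$) enters require care, and handling a \emph{finite union} of subspaces rather than a single one means controlling cross terms / overlaps of the tubes (a covering argument suffices since the union is finite and the tubes near the intersection stratum have negligible measure for small $\rho$). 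A secondary technical point is making rigorous the replacement of $g|_\Omega$ by $\theta-\lambda$ modulo the elastic error, i.e. splitting $\|g\|_{L^2(\Omega)}^2 \gtrsim \min\{\lambda,1-\lambda\}^2 - C E_{el}$ cleanly; this is routine via $\|u-\chi\|_{L^2}^2 = E_{el}$ and the triangle inequality, together with the observation that $\chi$ takes the two values $A,B$ with the constraint $\fint_\Omega$-compatible boundary datum forcing $|\{\chi=A\}|$ and $|\{\chi=B\}|$ both comparable to $|\Omega|$ unless one of the volume fractions degenerates — and in the degenerate case the elastic term alone is already $\gtrsim\min\{\lambda,1-\lambda\}^2|\Omega|$, which is $\gtrsim\epsilon^{\frac{2L}{2L+1}}$ for small $\epsilon$.
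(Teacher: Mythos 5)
Your overall reduction is the same as the paper's: invoke the Fourier lower bound from \cite{RRT22} to pass from $E_\epsilon^\A$ to $\int p(\xi/|\xi|)|\hat f|^2 + \epsilon\int|\nabla f|$, upgrade to $\dist_V(\xi/|\xi|)^{2L}$ via the maximal vanishing order, and then observe that Plancherel gives $\int|\hat f|^2 = \|f\|_{L^2}^2 \geq \min\{\lambda,1-\lambda\}^2|\Omega|$ to feed the lower bound. However, the central frequency-space estimate in your sketch has a real gap, and the shape of the decomposition is wrong.

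You propose a \emph{two}-region split into $\{\dist_V(\xi)\le\rho\}$ and its complement, with the near-$V$ part bounded by $\rho^{2L}\cdot(\text{BV of }g)$ and the far part by $\rho^{-2L}E_{el}$, then optimize in $\rho$. Neither piece holds as stated. The multiplier that the elastic energy actually controls is $\dist_V(\xi)^{2L}/|\xi|^{2L}$, which is small whenever $|\xi|$ is large regardless of $\dist_V(\xi)$; so $\int_{\dist_V>\rho}|\hat g|^2 \not\le \rho^{-2L}E_{el}$ unless you also truncate $|\xi|\le\eta$, which forces a third region $|\xi|\ge\eta$ to be handled separately. (If instead your $\rho$ is an angular threshold, the near-$V$ region is a full cone and the slicing argument you want to run there fails — a $\rho$-cone around a subspace has infinite, not small, measure.) And the claimed ``tubular Plancherel'' bound $\int_{\dist_V\le\rho}|\hat g|^2\lesssim \rho^{2L}\cdot\text{BV}$ has the wrong exponent and the wrong right-hand side: what is true is a pure $L^2\to L^2$ concentration estimate, $\int_{\dist_V\le\alpha}|\hat f|^2\le\delta\int|\hat f|^2$ for small $\alpha$ depending only on $\delta,\Omega,V$, proved by slicing in the $V^\perp$-directions (using that $\hat f$ restricted to each slice is an $L^\infty$-controlled compactly-supported Fourier transform). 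The exponent there is $\rho^{\operatorname{codim}V}$, unrelated to $L$, and no BV norm enters. If your $\rho^{2L}\cdot\text{BV}$ bound were true, the optimization you describe would yield $E_\epsilon\gtrsim\sqrt\epsilon$, which for $L\ge 1$ is strictly stronger than $\epsilon^{2L/(2L+1)}$ and is false already for the symmetrized gradient.

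The correct architecture — and what the paper uses (its Proposition on key estimates) — is a \emph{three}-region decomposition with \emph{two} independent parameters: fix $\alpha$ once to make $\int_{\dist_V\le\alpha}|\hat f|^2\le\tfrac12\int|\hat f|^2$ (concentration via slicing over each of the finitely many subspaces of $V$); bound $\int_{\dist_V\ge\alpha,|\xi|\le\eta}|\hat f|^2\le(\eta/\alpha)^{2L}\int(\dist_V(\xi)/|\xi|)^{2L}|\hat f|^2$ (here the $|\xi|\le\eta$ cutoff is essential); bound $\int_{|\xi|\ge\eta}|\hat f|^2\le C\eta^{-1}\|f\|_\infty(\int|\nabla f|+\|f\|_\infty\Per\Omega)$ (this is where the BV energy and the perimeter term enter). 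Summing and choosing $\eta\sim\epsilon^{-1/(2L+1)}$ gives the rate; $\alpha$ is not optimized. You have the right ingredients but have conflated the two parameters' roles and miscounted the near-$V$ exponent, and you are missing the high-$|\xi|$ region entirely.
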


	Let us comment on this: Firstly, we highlight that for the special case of the higher order curl and its potential, the generalized symmetrized gradient, the lower bound from \cref{thm:lower_bound_p} coincides with the lower bounds from \cref{thm:scaling_2D_new}. Given the
	matching upper bounds for $d=2$, $\lambda = \frac{1}{2}$ and arbitrary tensor order $m\in \N$ these are indeed optimal. The estimates from \cref{thm:lower_bound_p} are obtained by a combination of general high frequency bounds, quantitative coercivity estimates away from the zero
	set of $p$ and low frequency bounds for which we use careful localization arguments. 
Apart from the setting of the higher order curl and $d=2$, $\lambda = \frac{1}{2}$, we do not know whether
	the bounds from \cref{thm:lower_bound_p} are optimal. We plan to study associated upper bound constructions in future work.
	Secondly, we point out that the assumption on $V$ is such that the theorem is applicable for the model operators $\curl$, $\curl \curl$, $\di$ (and their higher order generalizations). From a technical point of view, it allows for an easier splitting argument in Fourier space. Without the assumption that $V$ is a finite union of linear spaces, more complicated structures may arise. It is feasible that covering arguments can be used to reduce these to similar settings as for the vector space case. For the clarity of presentation, we however do not discuss this in the present article.

	\subsection{Relation to the literature}
	The two-well problem and more generally the $N$-well problem are intensively studied questions in the non-convex calculus of variations \cite{M1, DaM12, K1,KMS03, CK00,Ri18}. They are closely
	related to questions on pattern formation in various materials, including, for instance, shape-memory alloys. By now there is a large literature also on quantitative results for associated
	singular perturbation problems which build on the seminal works of \cite{KM92,KM94}, including, for instance, the articles \cite{AKKR22,CC15,CDMZ20,CZ16,CT05, CKZ17, CDPRZZ20, CO12, CO09,
		GZ23, KK11, KKO13,R16, RTZ19, RT22,RTrib22, RT23}.  While generalizations of the differential inclusions from materials science to more general linear differential operators were already
	studied in the context of compensated compactness \cite{T79,T93,T05,Mur81,FM99,Dac82}, renewed interest in the associated problems in the calculus of variations has recently arisen in the context of investigating
	structure conditions, e.g., in the context of Korn-type inequalities \cite{GLN22}, compensated compactness \cite{GR22, GRS22, KR22, Rai19}, regularity results \cite{CG22} and also in the context of $N$-well
	problems \cite{DPPR18,RRT23,ST23}.

	\subsection{Outline of the article}
	The remainder of the article is structured as follows: After briefly recalling and fixing notation in \cref{sec:notation}, in \cref{sec:lower} we turn to the derivation of lower
	scaling bounds. To this end, we first deduce lower bounds for the symbol and then translate these into scaling behaviour, identifying the maximal vanishing order of the associated symbol on the unit sphere as the determining ingredient
	for these estimates. In \cref{sec:upper} we complement these bounds with upper scaling bounds in the case of the generalized symmetrized gradient in two dimensions and specific boundary data but with general
	tensor order.

	\section{Notation and Preliminaries}
	\label{sec:notation}
	
	In this section we collect some background on the tensors under consideration. In particular, we recall a characterization for the higher order curl by the Saint-Venant
	conditions and compute the wave cone for the higher order curl and divergence.
	
	\subsection{Tensor notation}
	\label{sec:tensors}
	
	We denote the space of rank $m$-tensors by $T^m(\R^d) = (\R^d)^{\otimes m}$, the elements are thus multilinear maps $M:\prod_{j=1}^m \R^d \to \R$ 
	with components given by $M_{i_1 i_2 \dots i_m} = M(e_{i_1},e_{i_2},\dots,e_{i_m})$.  The subspace of symmetric $m$-tensors is denoted by $\S{m} \subset T^m(\R^d)$, i.e., they satisfy
	$M(v_1,\dots,v_m) = M(v_{\tau(1)},\dots,v_{\tau(m)})$ for any permutation $\tau \in \mathfrak{S}_m$.
	
	By $\sigma_{i_1 \dots i_m}$ we denote the \emph{symmetrization operator} in the indices $i_1,\dots,i_m$ which is defined as
	\begin{align*}
		\sigma_{i_1 \dots i_m}(M_{i_1 \dots i_m}) = \frac{1}{m!} \sum_{\tau \in \mathfrak{S}_m} M_{i_{\tau(1)} \dots i_{\tau(m)}}, \ M \in T^m(\R^d).
	\end{align*}
	Similarly, we define the \emph{alternation operators} $\alpha_{i_k i_l}$ as
	\begin{align}
	\label{eq:antisym}
		\alpha_{i_k i_l} (M_{i_1 \dots i_m}) = \frac{1}{2} \Big(M_{i_1 \dots i_m} - M_{i_1 \dots i_{k-1} i_l i_{k+1} \dots i_{l-1} i_k i_{l+1} \dots i_m}\Big),
	\end{align}
	where, without loss of generality, we have assumed that $i_k < i_l$.
	
	We say a mapping $u: \R^d \to \S{m}$ is a symmetrized derivative, if there is $v: \R^d \to \S{m-1}$ such that
	\begin{align}\label{eq:potential}
		u_{i_1\dots i_m} = [D^{sym} v]_{i_1 \dots i_m}:= \sigma_{i_1 \dots i_m}( \p_{i_1} v_{i_2 \dots i_m}) = \frac{1}{m!} \sum_{\tau \in \mathfrak{S}_m} \p_{i_{\tau(1)}} v_{i_{\tau(2)} \dots i_{\tau(m)}}.
	\end{align}
	We refer to $u$ as the \emph{symmetrized derivative} of $v$.

	Using the tensor product of vectors given by $[v^1 \otimes \dots \otimes v^m]_{i_1 \dots i_m} = \prod_{k=1}^m v^k_{i_k}$ for $v^1,\dots,v^m \in \R^d$, $i_1, \dots, i_m \in \{1,\dots,d\}$, we introduce the \emph{symmetric tensor product} of vectors by setting
	\begin{align}
		\label{eq:sym_prod}
		v^1 \odot \dots \odot v^m = \sigma_{1 \dots m} ( v^1 \otimes \dots \otimes v^m)
	\end{align}
	for $v^1,\dots, v^m \in \R^d$.
        Both definitions can be adapted for tensor products of tensors instead of vectors, as these elementary tensors form a spanning set, thus for $M,N \in T^m(\R^d)$, we can write
          \begin{align*}
            M = \sum_{i_1,\dots,i_m=1}^d M_{i_1 \dots i_m} e_{i_1} \otimes \dots \otimes e_{i_m}, \ N = \sum_{j_1, \dots, j_m = 1}^d N_{j_1 \dots j_m} e_{j_1} \otimes \dots \otimes e_{j_m}
          \end{align*}
          and therefore we have
          \begin{align*}
            M \otimes N = \sum_{i_1,\dots,i_m,j_1,\dots,j_m=1}^d M_{i_1 \dots i_m} N_{j_1 \dots j_m} e_{i_1} \otimes \dots \otimes e_{i_m} \otimes e_{j_1} \otimes \dots \otimes e_{j_m}.
          \end{align*}
          An analogous operation is defined for symmetric tensors (for which an additional symmetrization is necessary).
          Moreover this can also be defined for tensors of different order.
        For convenience of notation, for $e_k \in \R^d$ and $j \in \N$ we also set
        \begin{align*}
        e_k^{\odot j}:= e_k \odot \dots \odot  e_k,
        \end{align*}
        where the symmetrized product on the right hand side is $j$ times with itself.

          In order to simplify the notation, we will use standard notation for multiinidices $l \in \N^d$.
          For given $l =(l_1,\dots,l_d)\in \N^d$ the absolute value is given by $|l| = \sum_{j=1}^d l_j$, the factorial by $l! = \prod_{j=1}^d l_j!$ and the multinomial coefficient by $\binom{|l|}{l} = \frac{|l|!}{l!}$.
          Moreover, for a given vector $\xi \in \R^d$ we use the convention that $\xi^l: = \prod_{j=1}^d \xi_j^{l_j}$ and $\p^l = \p_1^{l_1} \dots \p_d^{l_d}$.
          
	\subsection{Example operators}
	\label{sec:Saint_venant}

	Using the alternation operators $\alpha_{i j}$ from \cref{eq:antisym}, we consider the generalized Saint-Venant compatibility operator as the first example of an $m$-th order operator $\A(D): C^\infty(\R^d;\S{m}) \to C^\infty(\R^d;T^{2m}(\R^d))$ defined by
	\begin{align} \label{eq:Operator} [\A(D) u]_{i_1j_1 \dots i_m j_m} = \alpha_{i_1 j_1} \circ \dots \circ \alpha_{i_m j_m} (\p^m_{j_1 \dots j_m} u_{i_1 \dots i_m}).
	\end{align}

We provide the explicit formulas for this operator in the case $m\in \{1,2,3\}$ and $d=2$.	
	
	\begin{example}[$d=2$]\label{ex:d-and-m}
		Fixing the spatial dimension $d=2$, the compatibility conditions, which are given by a system of equations for general dimension $d$, become particularly transparent. More precisely, due to
		symmetry (see \cref{eq:sym} below), they simplify to the single equation $[\A(D)u]_{12\dots12} = 0$. Considering tensors of order $m=1,2,3$, we obtain the following compatibility conditions.
		\begin{itemize}
			\item \underline{$m=1$}: In this case we compute that $[\A(D)u]_{ij} =\frac{1}{2}( \p_j u_i - \p_i u_j)$ and thus
			\begin{align*}
				\A(D)u = 0 \text{ if and only if } \p_1 u_2 - \p_2 u_1 = 0.
			\end{align*}
			This is the well-known case of the characterization of gradients by means of Poincar\'e's lemma.
			\item \underline{$m=2$}: In the case of second order tensors we observe that
			\begin{align*} [\A(D)u]_{ikjl} = \frac{1}{4} \big( \p^2_{kl} u_{ij} + \p^2_{ij} u_{kl} - \p^2_{il} u_{kj} - \p^2_{kj} u_{il} \big).
			\end{align*}
			Hence,
			\begin{align*}
				\A(D)u = 0 \text{ if and only if } \p^2_{11} u_{22} + \p^2_{22} u_{11} - 2 \p^2_{12}u_{12} = \curl \curl u = 0.
			\end{align*}
			This corresponds to the classical characterization of the symmetrized gradient by means of the Saint-Venant conditions.
			\item \underline{$m=3$}: For third order tensors, also only one independent equation remains:
			\begin{align*}
				&\A(D)u = 0 \text{ if and only if } \\
				&[\A(D)u]_{121212} = \frac{1}{8} \big( \p^3_{222} u_{111} - \p^3_{111} u_{222} + 3 \p^3_{211} u_{122} - 3 \p^3_{221} u_{112}\big) = 0.
			\end{align*}
			This is the characterization of being a symmetrized derivative. We emphasize that analogous characterizations can be obtained for tensors of arbitrary order and dimension.
		\end{itemize}
	\end{example}
	
	Also beyond the case $d=2$, the Saint-Venant operator characterizes symmetrized derivatives:
	A function $f \in C^\infty_c(\R^d;\S{m})$ fulfills $\A(D)f = 0$ if and only if $f$ is a symmetrized derivative \cite[Thm. 2.2.1, Eq. (2.4.6), (2.4.7)]{Sha94}.
In what follows, we will therefore also refer to the Saint-Venant operator as a higher order curl operator.
        
          By definition of $\A(D)$, we have the following (anti)symmetries:
          \begin{align}
          \label{eq:sym}
          \begin{split}
            [\A(D)u]_{i_{\tau(1)} j_{\tau(1)} i_{\tau(2)} j_{\tau(2)} \dots i_{\tau(m)} j_{\tau(m)}}& = [\A(D) u]_{i_1 j_1 \dots i_m j_m} \text{ for all } \tau \in \mathfrak{S}_m, \\
            [\A(D)u]_{i_1j_1\dots i_mj_m} & = - [\A(D)u]_{j_1 i_1 i_2 j_2 \dots i_m j_m}.
            \end{split}
          \end{align}

          In what follows, we will make use of these symmetries to further determine the symbol of $\A(D)$, as defined in \cref{eq:symbol}.
          For $\xi \in \R^d$, componentwise, it is given by
	\begin{align} \label{eq:Symbol_SVO}
		\begin{split}
			[\AA(\xi) M]_{i_1 j_1 \dots i_m j_m} & = \alpha_{i_1 j_1} \circ \dots \circ \alpha_{i_m j_m} (\xi_{j_1} \dots \xi_{j_m} M_{i_1 \dots i_m}) \\
			& = \alpha_{i_1 j_1} \circ \dots \circ \alpha_{i_m j_m} (M(\xi_{j_1} e_{i_1},\dots, \xi_{j_m} e_{i_m})) \\
			& = 2^{-m}  M(\xi_{j_1} e_{i_1} - \xi_{i_1} e_{j_1}, \dots, \xi_{j_m} e_{i_m} - \xi_{i_m} e_{j_m})
		\end{split}
	\end{align}
	for $M \in \S{m}$.  Moreover, by multilinearity for any orthonormal basis $v_1,\dots,v_d$ it holds
	\begin{align} \label{eq:InvarianceBasis}
          \begin{split}
            [\AA(\xi)M]&(v_{i_1},v_{j_1},\dots,v_{i_m},v_{j_m}) \\
          & = 2^{-m} M\Big((\xi \cdot v_{j_1}) v_{i_1} - (\xi \cdot v_{i_1}) v_{j_1},\dots, (\xi \cdot v_{j_m}) v_{i_m} - (\xi \cdot
		v_{i_m}) v_{j_m}\Big).
          \end{split}
	\end{align}
	Using these observations, we rewrite the symbol of the higher order curl in a concise way.
	
		\begin{lem}\label{lem:SVsymb}
			Let $M = a_1 \odot \dots \odot a_m$ for $a_1,\dots,a_m \in \R^d$.  For $\xi \in \R^d$ and $\AA$ as in \cref{eq:Symbol_SVO}, we have
			\begin{align} \label{eq:AlternativeSymbol}
                          \AA(\xi)M = (a_1 \circleddash \xi) \odot \dots \odot (a_m \circleddash \xi).
			\end{align}
			Here we use the symbol $\circleddash$ to denote the antisymmetric tensor product of two vectors, i.e.~$a \circleddash b = \frac{1}{2}( a \otimes b - b \otimes a)$ for $a,b \in \R^d$, and consider the symmetrized tensor product of tensors:
                          \begin{align*}
                            (a_1 \circleddash \xi) & \odot \dots \odot (a_m \circleddash \xi) := \frac{1}{m!} \sum_{\tau \in \mathfrak{S}_m} (a_{\tau(1)} \circleddash \xi) \otimes \dots \otimes (a_{\tau(m)} \circleddash \xi) \\
                            & = \frac{1}{m!} 2^{-m} \sum_{\tau \in \mathfrak{S}_m} (a_{\tau(1)} \otimes \xi - \xi \otimes a_{\tau(1)}) \otimes \dots \otimes (a_{\tau(m)} \otimes \xi - \xi \otimes a_{\tau(m)}).
                        \end{align*}

		\end{lem}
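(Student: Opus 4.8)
The plan is to reduce the identity to the explicit componentwise expression of the symbol in \eqref{eq:Symbol_SVO} and to expand the symmetric product $M = a_1 \odot \dots \odot a_m$ slot by slot. Since $M \in \S{m}$, formula \eqref{eq:Symbol_SVO} yields
\begin{align*}
  [\AA(\xi)M]_{i_1 j_1 \dots i_m j_m} = 2^{-m}\, M\big(\xi_{j_1}e_{i_1} - \xi_{i_1}e_{j_1}, \dots, \xi_{j_m}e_{i_m} - \xi_{i_m}e_{j_m}\big).
\end{align*}
First I would record the elementary identity that, for any $a \in \R^d$ and any indices $i,j$, the Euclidean inner product of $a$ with the building block $\xi_j e_i - \xi_i e_j$ is
\begin{align*}
  a \cdot (\xi_j e_i - \xi_i e_j) = a_i \xi_j - a_j \xi_i = 2\,[a \circleddash \xi]_{ij},
\end{align*}
the last equality holding by the definition $a\circleddash\xi = \tfrac12(a\otimes\xi - \xi\otimes a)$. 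Thus each of the $m$ slots in the formula above, once contracted against one of the vectors $a_k$, produces a factor $2\,[a_{(\cdot)}\circleddash\xi]_{i_k j_k}$.

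Next I would expand $M = \sigma_{1\dots m}(a_1 \otimes \dots \otimes a_m) = \tfrac{1}{m!}\sum_{\tau \in \mathfrak{S}_m} a_{\tau(1)} \otimes \dots \otimes a_{\tau(m)}$, view it as a multilinear form, and insert $\xi_{j_k}e_{i_k} - \xi_{i_k}e_{j_k}$ in the $k$-th slot:
\begin{align*}
  M\big(\xi_{j_1}e_{i_1} - \xi_{i_1}e_{j_1}, \dots, \xi_{j_m}e_{i_m} - \xi_{i_m}e_{j_m}\big) = \frac{1}{m!}\sum_{\tau \in \mathfrak{S}_m} \prod_{k=1}^m \big((a_{\tau(k)})_{i_k}\xi_{j_k} - (a_{\tau(k)})_{j_k}\xi_{i_k}\big) = \frac{2^m}{m!}\sum_{\tau \in \mathfrak{S}_m}\prod_{k=1}^m [a_{\tau(k)}\circleddash\xi]_{i_k j_k}.
\end{align*}
The factor $2^m$ cancels the prefactor $2^{-m}$ of the symbol, leaving
\begin{align*}
  [\AA(\xi)M]_{i_1 j_1 \dots i_m j_m} = \frac{1}{m!}\sum_{\tau \in \mathfrak{S}_m}\prod_{k=1}^m [a_{\tau(k)}\circleddash\xi]_{i_k j_k},
\end{align*}
and the right-hand side is, by the definition of the symmetrized tensor product of the two-tensors $a_k \circleddash \xi$ recorded in the statement, exactly the $(i_1 j_1 \dots i_m j_m)$-component of $(a_1 \circleddash \xi) \odot \dots \odot (a_m \circleddash \xi)$, which proves \eqref{eq:AlternativeSymbol}.

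The only point that requires care is the bookkeeping of the normalizations $2^{\pm m}$ and $\tfrac{1}{m!}$ together with the observation that symmetrizing $M$ over $\mathfrak{S}_m$ before performing the slotwise contractions gives the same result as forming the symmetrized tensor product of the already-contracted blocks $a_k\circleddash\xi$; this holds because the $k$-th contraction only sees $a_{\tau(k)}$ and hence commutes with the block permutation. There is no genuine obstacle here: the statement is a direct consequence of \eqref{eq:Symbol_SVO}. Alternatively, one may first note that the right-hand side of \eqref{eq:Symbol_SVO} extends verbatim to all of $T^m(\R^d)$, verify the non-symmetrized identity $\AA(\xi)(a_1\otimes\dots\otimes a_m) = (a_1\circleddash\xi)\otimes\dots\otimes(a_m\circleddash\xi)$ by the same one-line computation in each slot, and then apply the symmetrization $\sigma_{1\dots m}$, using that $\AA(\xi)$ intertwines slot permutations of the argument with block permutations of the image (cf. \eqref{eq:sym}); the direct computation above is, however, the shortest route.
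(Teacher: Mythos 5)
Your proof is correct and follows essentially the same route as the paper: both start from the componentwise form of \eqref{eq:Symbol_SVO}, expand $M = \tfrac{1}{m!}\sum_{\tau\in\mathfrak{S}_m} a_{\tau(1)}\otimes\dots\otimes a_{\tau(m)}$ as a multilinear form, identify each slotwise contraction $\tfrac12 a_{\tau(k)}\cdot(\xi_{j_k}e_{i_k}-\xi_{i_k}e_{j_k})$ with $[a_{\tau(k)}\circleddash\xi]_{i_k j_k}$, and read off the symmetrized tensor product. The bookkeeping of the $2^{\pm m}$ and $\tfrac{1}{m!}$ factors that you flag is handled the same way, and the alternative non-symmetrized route you sketch at the end is a valid but not-needed variant.
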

		
		\begin{proof}
			By \cref{eq:Symbol_SVO}, we have
			\begin{align*}
				[\AA(\xi)M]_{i_1j_1\dots i_m j_m} & = \frac{1}{m!} \sum_{\tau \in \mathfrak{S}_m} \Big( \prod_{k=1}^m \frac{1}{2} a_{\tau(k)}\cdot (\xi_{j_k} e_{i_k} - \xi_{i_k} e_{j_k}) \Big) \\
				&= \frac{1}{m!} \sum_{\tau \in \mathfrak{S}_m} \Big( \prod_{k=1}^m \frac{1}{2} ([a_{\tau(k)} \otimes \xi]_{i_k j_k} - [\xi \otimes a_{\tau(k)}]_{i_k j_k} ) \Big) \\
				& = \frac{1}{m!} \sum_{\tau \in \mathfrak{S}_m} \Big( \prod_{k=1}^m [a_{\tau(k)} \circleddash \xi]_{i_k j_k} \Big) \\
				&= \frac{1}{m!} \sum_{\tau \in
					\mathfrak{S}_m} \Big[ (a_{\tau(1)} \circleddash \xi) \otimes \dots \otimes (a_{\tau(m)} \circleddash \xi) \Big]_{i_1 j_1 \dots i_m j_m}.
			\end{align*}
			This shows the claim.
		\end{proof}
		          
            Motivated by \cite[Appendix B]{RRT23}, i.e., by the fact that lower bounds on $E_{\epsilon}^{\A}$ for any $m$-th order homogeneous, constant coefficient, linear differential operator $\A(D)$ can be deduced by lower bounds for the $m$-th order divergence, as a second model example of an $m$-th order operator we consider the $m$-th order divergence
            \begin{gather} \label{eq:DivergenceOp}
              \begin{aligned}
                \mathcal{B}(D) = \di^m &: C^\infty(\R^d;\R^k \otimes \S{m}) \to C^\infty(\R^d;\R^k), \\
                [\di^m u]_j & := \sum_{1\leq i_1 \leq \dots \leq i_m \leq d} \partial_{i_1 \dots i_m}^m u_{j i_1 \dots i_m}, \ j \in \{1,\dots,k\},
              \end{aligned}
            \end{gather}
            for some integer $k\ge1$.
            It is straightforward to extend this definition (and all the results obtained in the sequel for $\di^m$) to $m$-th order divergence-type operators acting on fields $u:\R^d\to W\otimes\S{m}$, where $W$ is a (real) $k$-dimensional vector space. We denote the symbol of $\mathcal{B}(D)$ by $\mathbb{B}$.

            Similarly as in \cref{lem:SVsymb}, we give a precise formulation for the symbol of $\mathcal{B}(D)$ on a basis of $\R^k \otimes \S{m}$.
            \begin{lem} \label{lem:SymbolDiv}
              Let $v\in\R^k$ and let $M = v \otimes e_1^{\odot l_1} \odot \dots \odot e_d^{\odot l_d}$ for a partition $\sum_{j=1}^d l_j = m$. The symbol of $\mathcal{B}(D)$ is given by
              \begin{align*}
                \mathbb{B}(\xi)M = \frac{l_1! l_2! \dots l_d!}{m!} \prod_{j=1}^d \xi_j^{l_j} v = \binom{m}{l}^{-1} \xi^l v,
              \end{align*}
              with $l = (l_1,l_2,\dots,l_d) \in \mathbb{N}^d$, $\xi \in \R^d$.
            \end{lem}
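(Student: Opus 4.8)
\textit{Proof strategy.}
The plan is to compute the symbol directly from the definition \eqref{eq:DivergenceOp} and then evaluate it on the basis tensor $M = v \otimes e_1^{\odot l_1} \odot \dots \odot e_d^{\odot l_d}$, reducing the whole statement to a short combinatorial count of the components of a symmetrized elementary tensor.

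First I would record that, since $\mathcal{B}(D) = \di^m$ is homogeneous of order $m$ with constant coefficients, its symbol acts componentwise by replacing each $\partial_{i_k}$ with $\xi_{i_k}$, so that for $u\colon\R^d\to\R^k\otimes\S{m}$ one has
\[
  [\mathbb{B}(\xi) u]_j = \sum_{1\le i_1\le\dots\le i_m\le d} \xi_{i_1}\cdots\xi_{i_m}\, u_{j\, i_1\dots i_m}, \qquad j\in\{1,\dots,k\}.
\]
Writing $M_{j\, i_1\dots i_m} = v_j\, T_{i_1\dots i_m}$ with $T := e_1^{\odot l_1}\odot\dots\odot e_d^{\odot l_d}\in\S{m}$, it then suffices to identify the components of $T$.

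The key step is the combinatorial identity that $T_{i_1\dots i_m} = \tfrac{l!}{m!}$ whenever the multi-index $(i_1,\dots,i_m)$ contains exactly $l_j$ entries equal to $j$ for each $j\in\{1,\dots,d\}$, and $T_{i_1\dots i_m}=0$ otherwise. To see this I would set $P := (1^{l_1},2^{l_2},\dots,d^{l_d})$ for the sorted multi-index associated with $l$, note that $e_1^{\otimes l_1}\otimes\dots\otimes e_d^{\otimes l_d}$ has components $\prod_{a=1}^m\delta_{i_a P_a}$, and apply the symmetrization operator appearing in \eqref{eq:sym_prod}, which gives $T_{i_1\dots i_m} = \tfrac1{m!}\,\#\{\tau\in\mathfrak{S}_m\colon\ i_{\tau(a)}=P_a\ \text{for all }a\}$. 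Such a $\tau$ exists if and only if the multiset $\{i_1,\dots,i_m\}$ equals $\{1^{l_1},\dots,d^{l_d}\}$, and in that case the number of admissible $\tau$ equals $\prod_{j=1}^d l_j! = l!$, since $\tau$ must biject, for each value $j$, the $l_j$ positions of $P$ carrying $j$ onto the $l_j$ positions of $(i_1,\dots,i_m)$ carrying $j$.

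Finally I would substitute this into the symbol formula: in the ordered sum $1\le i_1\le\dots\le i_m\le d$ each multiset of values occurs exactly once as its sorted representative, so the only surviving term is the one with $(i_1,\dots,i_m)=P$, contributing $\xi_{P_1}\cdots\xi_{P_m}\, v_j\, \tfrac{l!}{m!} = \xi^l v_j\,\tfrac{l!}{m!}$ because $\prod_{a=1}^m\xi_{P_a} = \prod_{j=1}^d\xi_j^{l_j}=\xi^l$. Hence $\mathbb{B}(\xi)M = \tfrac{l!}{m!}\,\xi^l v = \binom{m}{l}^{-1}\xi^l v$, as claimed. The only point requiring care is keeping the two layers of bookkeeping straight — the ordered summation in the definition of $\di^m$, which makes each symmetric component appear exactly once, versus the permutation count hidden inside the symmetrization defining $\odot$ — but this is purely a bookkeeping matter and presents no genuine obstacle.
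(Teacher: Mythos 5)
Your proof is correct and follows essentially the same route as the paper: identify the components of the elementary symmetric tensor $T = e_1^{\odot l_1}\odot\dots\odot e_d^{\odot l_d}$, observe that in the ordered sum defining $\di^m$ only the sorted multi-index $P=(1^{l_1},\dots,d^{l_d})$ contributes $v_j\,l!/m!$, and read off $\xi^l\binom{m}{l}^{-1}v$. The only difference is that you spell out the permutation count showing $T_P = l!/m!$, which the paper simply asserts, so your write-up is if anything slightly more complete.
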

            \begin{proof}
              Let $M = v \otimes e_1^{\odot l_1} \odot \dots \odot e_d^{\odot l_d}$ for some $v \in \R^k$. Using that for $1 \leq i_1 \leq i_2 \leq \dots \leq i_m \leq d$
              \begin{align*}
                [M]_{j i_1 \dots i_m} = \begin{cases}
                                          v_j \frac{l_1! l_2! \dots l_{d}!}{m!} & \begin{aligned} &i_1,\dots,i_{l_1} = 1, i_{l_1+1}, \dots, i_{l_1+l_2} = 2, \dots, \\ &i_{m-l_d+1}, \dots, i_{m} = d, \end{aligned} \vspace{5pt}\\
                                          0 & \text{else,}
                                        \end{cases}
              \end{align*}
              we obtain
                        \begin{align*}
                          [\mathbb{B}(\xi)M]_j & = \sum_{1 \leq i_1 \leq i_2 \leq \dots \leq i_m \leq d} \xi_{i_1} \xi_{i_2} \dots \xi_{i_m} M_{ji_1 \dots i_m} = \xi_1^{l_1} \xi_{2}^{l_2} \dots \xi_{d}^{l_d} v_j \frac{l_1! \dots l_d!}{m!}\\
                          & = v_j \binom{m}{l}^{-1} \xi^l.
                          \end{align*}
            \end{proof}

          In what follows, we will consider the extensions of $\A(D), \mathcal{B}(D)$ to distributional derivatives by duality.

	\subsection{Computation of the wave cones for symmetrized derivatives and the higher order divergence}
	\label{sec:symm_deriv}
	With the above discussion in mind, in what follows, we consider the differential operators given by \cref{eq:Operator,eq:DivergenceOp}.
		 We next identify the associated wave cones.
	
	\begin{lem}[Higher order curl] 
	\label{lem:WaveCone} 
	For $\xi \in \R^d$ let $\AA(\xi)$ be given by \cref{eq:Symbol_SVO}, where the operator $\A(D)$ is given in \cref{eq:Operator}.  The kernel of $\AA(\xi)$ is given by
		\begin{align*}
			\ker \AA(\xi) & = \vspan \Big\{ \sigma_{1 \dots m} (v_{1} \otimes \dots \otimes v_{m}): v_{i} \in \vspan(\xi) \text{ for some } i \in \{1,\dots,m\}\Big\} \\
			& = \vspan\Big\{ \xi \odot a_2 \odot \dots \odot a_m: a_2,\dots,a_m \in \R^d \Big\}.
		\end{align*}
	\end{lem}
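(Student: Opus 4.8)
The plan is to read off the kernel from the concise form of the symbol stated just before the lemma, using Lemma~\ref{lem:SVsymb} for one inclusion and an adapted orthonormal basis for the other. As a preliminary, the two right-hand sides coincide: since $\sigma_{1\dots m}$ is invariant under permuting its arguments, in $\sigma_{1\dots m}(v_1\otimes\dots\otimes v_m)$ with $v_i\in\vspan(\xi)$ for some $i$ we may relabel so that $i=1$; and since the elementary symmetric products $a_2\odot\dots\odot a_m$ span $\S{m-1}$ and $\odot$ is multilinear, both sets equal
\begin{align*}
 W_\xi:=\vspan\{\xi\odot N:\ N\in\S{m-1}\}=\xi\odot\S{m-1}.
\end{align*}
The inclusion $W_\xi\subseteq\ker\AA(\xi)$ is then immediate from Lemma~\ref{lem:SVsymb}: for $M=\xi\odot a_2\odot\dots\odot a_m$ we get $\AA(\xi)M=(\xi\circleddash\xi)\odot(a_2\circleddash\xi)\odot\dots\odot(a_m\circleddash\xi)=0$ because $\xi\circleddash\xi=0$, and by linearity $\AA(\xi)$ annihilates all of $W_\xi$.

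For the converse inclusion I would evaluate the symbol in a basis adapted to $\xi$. Fix $\xi\neq0$, set $w_1:=\xi/|\xi|$ and complete it to an orthonormal basis $w_1,\dots,w_d$ of $\R^d$. Using the basis-free formula for $[\AA(\xi)M]$ recorded before the lemma and $\xi\cdot w_j=|\xi|\,\delta_{1j}$, the $k$-th slot vector $(\xi\cdot w_{j_k})w_{i_k}-(\xi\cdot w_{i_k})w_{j_k}$ equals $|\xi|\,(\delta_{1j_k}w_{i_k}-\delta_{1i_k}w_{j_k})$, which vanishes unless exactly one of $i_k,j_k$ is $1$. Taking $i_k=1$ and $j_k=a_k$ with $a_k\in\{2,\dots,d\}$ for every $k$, the slot vectors become $-|\xi|\,w_{a_k}$, so
\begin{align*}
 [\AA(\xi)M](w_1,w_{a_1},\dots,w_1,w_{a_m})=(-1)^m(|\xi|/2)^m\,M(w_{a_1},\dots,w_{a_m}).
\end{align*}
Hence $\AA(\xi)M=0$ forces $M(w_{a_1},\dots,w_{a_m})=0$ for all $a_1,\dots,a_m\in\{2,\dots,d\}$; equivalently, every coordinate of $M$ in the $w$-basis whose index contains no $1$ vanishes.

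It then remains to translate this into membership in $W_\xi$. Expanding $M=\sum_{|l|=m}c_l\,w_1^{\odot l_1}\odot\dots\odot w_d^{\odot l_d}$ in the basis of elementary symmetric products of the $w_j$, the coordinate of $M$ at an index whose multiset has exponent vector $l$ equals $\binom{m}{l}^{-1}c_l$ (cf.\ the computation of $\mathbb{B}$). The previous step thus gives $c_l=0$ whenever $l_1=0$, so, using commutativity and associativity of $\odot$,
\begin{align*}
 M=w_1\odot\Big(\sum_{l_1\geq1}c_l\,w_1^{\odot(l_1-1)}\odot w_2^{\odot l_2}\odot\dots\odot w_d^{\odot l_d}\Big)\in w_1\odot\S{m-1}=\xi\odot\S{m-1}=W_\xi.
\end{align*}
This proves $\ker\AA(\xi)\subseteq W_\xi$, and with the first paragraph the lemma follows. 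The argument is mostly bookkeeping; the one place where care is needed is the claim that, after specializing to the $w$-basis, the slot vectors force the vanishing of precisely the coordinates of $M$ with no index equal to $1$ — after that the conclusion is elementary linear algebra in the symmetric algebra.
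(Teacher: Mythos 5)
Your proposal is correct and follows essentially the same route as the paper: both establish $W_\xi\subseteq\ker\AA(\xi)$ via the factorization of Lemma~\ref{lem:SVsymb} and $\xi\circleddash\xi=0$, and both establish the converse by passing to an orthonormal basis adapted to $\xi$, extracting the components of $\AA(\xi)M$ in which each pair of indices is $\{1,a_k\}$ with $a_k\neq1$ to conclude that the coordinates of $M$ with no index equal to $1$ vanish, and then reading off membership in $\xi\odot\S{m-1}$ from the expansion of $M$ in the basis of elementary symmetric products. The only cosmetic differences are that the paper first reduces to $\xi=e_1$ while you carry $|\xi|$ through, and that you add a short preliminary paragraph checking that the two descriptions of the right-hand side coincide, which the paper leaves implicit.
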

	
	\begin{proof}
		We prove the claim of the lemma by showing that being in the span of $\{\xi \odot a_2 \odot \dots \odot a_m: a_2,\dots,a_m \in \R^d \}$ is both necessary and sufficient for being an element
		of the kernel.
		
		For simplicity we may assume that $\xi = e_1$ as by the homogeneity of $\AA$ we have $\ker \AA(\xi) = \ker \AA(\frac{\xi}{|\xi|})$ and further we can choose an orthonormal basis
		$v_1,\dots,v_d$ such that $v_1 = \frac{\xi}{|\xi|}$, and thus
		\begin{align*} [\AA(\xi)M](v_{i_1},v_{j_1},\dots,v_{i_m},v_{j_m}) = |\xi|^m[\AA(v_1)M](v_{i_1},v_{j_1},\dots,v_{i_m},v_{j_m}).
		\end{align*}
		This change of basis can be seen in \cref{eq:InvarianceBasis}.
		
		Having fixed this, we seek to show that
		\begin{align*}
			\ker \AA(e_1) = \vspan\Big\{ e_1 \odot a_2 \odot \dots \odot a_m: a_2,\dots,a_m \in \R^d\Big\}.
		\end{align*}
		
		First, let $M = e_1 \odot a_2 \odot \dots \odot a_m$. The inclusion $\vspan\Big\{ e_1 \odot a_2 \odot \dots \odot a_m: a_2,\dots,a_m \in \R^d\Big\} \subset \ker \AA(e_1) $ is
		then immediate: Indeed, by \cref{eq:AlternativeSymbol}, we see that $\AA(\xi)M = 0$, since $e_1 \circleddash e_1 = 0$. 
		
		For the converse inclusion, we assume that $M \in \ker \AA(e_1)$. Using \cref{eq:Symbol_SVO}, we consider the components given by $i_1,\dots,i_m \neq 1$:
			\begin{align*} [\AA(e_1)M]_{i_1 1 \dots i_m 1} = M(e_{i_1}, \dots, e_{i_m}) = M_{i_1 \dots i_m} = 0.
			\end{align*}
			Furthermore, as $e_{k_1} \odot \dots \odot e_{k_m}$ for $1 \leq k_1 \leq k_2 \leq \dots \leq k_m \leq d$ forms a basis of $\S{m}$ and as we have seen, the only non-vanishing components of $M$ are those with at least one $1$ in the index, we can write 
			\begin{align*}
				M & = \sum_{i_1,i_2,\dots,i_m = 1}^d M_{i_1 i_2 \dots i_m} e_{i_1} \otimes e_{i_2} \otimes \dots \otimes e_{i_m} \\
				& = \sum_{1 \leq i_1 \leq i_2 \leq \dots \leq i_m \leq d} \binom{m}{\sum_{p=1}^m e_{i_p}} M_{i_1 \dots i_m} e_{i_1} \odot \dots \odot e_{i_m}\\
				& = \sum_{1 \leq i_2 \leq \dots \leq i_m \leq d} \binom{m}{e_1 + \sum_{p=2}^m e_{i_p}} M_{1 i_2 \dots i_m} e_1 \odot e_{i_2} \odot \dots \odot e_{i_m}.
			\end{align*}
			This shows that indeed $M \in \vspan\{e_1 \odot a_2 \odot \dots \odot a_m: a_2,\dots,a_m \in \R^d\}$.
		\end{proof}
		
		In concluding this section, we also consider the $m$-th order divergence and compute the structure of its wave
		cone.
		
		\begin{lem}[Higher order divergence] \label{rmk:Divergence} Let $\mathcal{B}(D)$ be the $m$-th order divergence given by \cref{eq:DivergenceOp}.			
                  Then the wave cone of $\mathcal{B}(D)$ is given by ($\xi \in \R^d$)
			\begin{align*}
				\ker \mathbb{B}(\xi) = \vspan \Big\{ v \otimes (a_1 \odot \dots \odot a_m): v \in \R^k, \ a_j \cdot \xi = 0 \text{ for some } j \in \{1,\dots,m\} \Big\}.
			\end{align*}
                      \end{lem}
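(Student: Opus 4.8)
The plan is to follow the proof of \cref{lem:WaveCone}. By homogeneity of the symbol and equivariance of $\mathcal B(D)$ under orthogonal changes of the spatial basis (arguing exactly as in that proof: replace $\xi$ by $\xi/|\xi|$ and pick an orthonormal basis with first vector $\xi/|\xi|$), it suffices to treat $\xi=e_1$ and to show
\[
\ker\mathbb B(e_1)=\vspan\bigl\{v\otimes(a_1\odot\dots\odot a_m):\ v\in\R^k,\ a_1,\dots,a_m\in\R^d,\ a_j\cdot e_1=0\ \text{for some }j\bigr\}.
\]

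The left-hand side I would read off directly from the formula for $\mathbb B$ established above: for $v\in\R^k$ and a partition $\sum_{j=1}^d l_j=m$ one has $\mathbb B(\xi)\bigl(v\otimes e_1^{\odot l_1}\odot\dots\odot e_d^{\odot l_d}\bigr)=\binom{m}{l}^{-1}\xi^l\,v$, and for $\xi=e_1$ the monomial $\xi^l=\prod_{j=1}^d\xi_j^{l_j}$ vanishes unless $l=(m,0,\dots,0)$, in which case it equals $1$ and $\binom{m}{l}=1$. Hence $v\otimes e_1^{\odot m}\mapsto v$, while every other basis vector $v\otimes e_1^{\odot l_1}\odot\dots\odot e_d^{\odot l_d}$ — that is, every one with $l_1<m$ — lies in $\ker\mathbb B(e_1)$. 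Since these vectors together with the $v\otimes e_1^{\odot m}$ form a basis of $\R^k\otimes\S{m}$,
\[
\ker\mathbb B(e_1)=\vspan\bigl\{v\otimes e_1^{\odot l_1}\odot\dots\odot e_d^{\odot l_d}:\ v\in\R^k,\ l_1<m\bigr\}.
\]

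It then remains to match this with the span in the statement. For the inclusion $\subseteq$, a monomial $e_1^{\odot l_1}\odot\dots\odot e_d^{\odot l_d}$ with $l_1<m$ is a symmetric product of standard basis vectors at least one of which is some $e_i$ with $i\ge 2$, and $e_i\cdot e_1=0$. For $\supseteq$, suppose $a_{j_0}\cdot e_1=0$, so $a_{j_0}\in\vspan\{e_2,\dots,e_d\}$; expanding each $a_j$ in the standard basis and using multilinearity of $\odot$, the product $a_1\odot\dots\odot a_m$ is a linear combination of elementary symmetric products $e_{i_1}\odot\dots\odot e_{i_m}$ in which the factor at position $j_0$ is always some $e_{i_{j_0}}$ with $i_{j_0}\ge 2$; hence in each such term the index $1$ appears at most $m-1$ times, i.e.\ every term is an $e_1^{\odot l_1}\odot\dots\odot e_d^{\odot l_d}$ with $l_1<m$. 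Tensoring with arbitrary $v\in\R^k$ and passing to spans gives $\supseteq$. This proves the displayed identity at $\xi=e_1$; undoing the reduction yields it for every $\xi\in\R^d$.

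None of this is deep. The two points that warrant care are the initial reduction to $\xi=e_1$ (done, as in \cref{lem:WaveCone}, via homogeneity and a basis adapted to $\xi$) and the inclusion $\supseteq$ of the final span identity, namely checking that symmetric products of vectors one of which is orthogonal to $\xi$ exhaust the symmetric tensors with no $\xi^{\odot m}$-component; the computation of $\ker\mathbb B(e_1)$ itself is an immediate consequence of the symbol formula for $\mathcal B(D)$.
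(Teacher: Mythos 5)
Your proof proceeds by a different route than the paper's. The paper computes the adjoint $\mathbb{B}(\xi)^\ast w = w \otimes \xi^{\odot m}$ with respect to the ordered-index inner product $S \cdot T = \sum_{1\leq i_1 \leq \dots \leq i_m \leq d} S_{i_1 \dots i_m} T_{i_1 \dots i_m}$ and reads off $\ker \mathbb{B}(\xi) = (\ran \mathbb{B}(\xi)^\ast)^\perp$ directly, for every $\xi$, with no reduction to a normal form. You instead reduce to $\xi = e_1$ and then compute the kernel on the monomial basis and match it to the stated span; those last two steps are correct and in fact more explicit than what the paper writes down.

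The gap is the reduction to $\xi = e_1$. In \cref{lem:WaveCone} that reduction was not just ``homogeneity plus a basis change'': it was licensed by the explicit frame-independent formula for $\AA(\xi)$ in \eqref{eq:Symbol_SVO} together with the display immediately below it, which expresses $[\AA(\xi)M](v_{i_1},v_{j_1},\dots)$ in the same form for any orthonormal frame $\{v_i\}$. No analogue is stated for $\di^m$, and with the definition \eqref{eq:DivergenceOp}---which sums over ordered multi-indices $1 \leq i_1 \leq \dots \leq i_m \leq d$ with unit weight, i.e. without the multinomial multiplicities---the symbol $\mathbb{B}(\xi)$ is not covariant under orthonormal changes of the spatial frame. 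One can already see this for $d=m=2$: $\mathbb{B}(e_1)(v\otimes S) = v S_{11}$, but passing to a $45^\circ$-rotated frame $\{v_1,v_2\}$, writing $\xi' = v_1$, $S' = S$ in the new frame, and evaluating the same ordered-sum formula gives $\tfrac{v}{4}(S_{11}+3S_{22})$, which is not $vS_{11}$. So ``arguing exactly as in that proof'' does not apply; to make your route rigorous you would either need to verify equivariance for the precise normalisation of $\di^m$ in use (or change that normalisation), whereas the paper's adjoint computation avoids any normal form for $\xi$ and so sidesteps the question entirely.
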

                      
                      \begin{proof}
                        We show the claim by using that $\ker \mathbb{B}(\xi) = (\operatorname{ran} \mathbb{B}(\xi)^\ast)^\perp$, as the adjoint has a simple structure.
                        
                  Indeed, considering the scalar product on symmetric
				tensors $S,T \in \S{m}$ given by $S \cdot T = \sum_{1\leq i_1 \leq \dots \leq i_m \leq d} S_{i_1 \dots i_m} T_{i_1 \dots i_m}$,
                  the adjoint is given by 
			\begin{align*}
				\mathbb{B}(\xi)^\ast w = w \otimes \xi \otimes \xi \otimes \dots \otimes \xi = w \otimes \xi^{\odot m} \in \R^k \otimes \S{m}, \  w \in \R^k.
			\end{align*}
			Thus the kernel is given by
			\begin{align*}
                          \ker \mathbb{B}(\xi) & = \{w \otimes \xi^{\odot m}: w \in \R^k\}^\perp  \\
                          & = \vspan\Big\{v \otimes (a_1 \odot \dots \odot a_m): v \in \R^k, a_j \cdot \xi = 0 \mbox{ for some } j\Big\}.
			\end{align*}
                        This shows the statement.
		\end{proof}

		\section{Lower Bound Scaling Results}
		\label{sec:lower}
		
		With the characterization of the wave cones for the higher order curl and the higher order divergence in hand, in this section we turn to the proof of (general) lower scaling
		bounds. The core of this consists of an adaptation of the lower bound argument from \cite{RRT23} allowing to deal with rather general zero sets consisting of a union of linear subspaces, see \cref{sec:scaling}.
                
                  To be more precise, we have the following lower bound of the energy for a general homogeneous, constant coefficient, linear operator $\A(D)$ and two wells $A,B \in \R^n$ with $F_\lambda = \lambda A + (1-\lambda) B$:
                Writing $\chi = f(A-B) + F_\lambda \in BV(\Omega;\{A,B\})$ with $f \in BV(\Omega;\{1-\lambda,-\lambda\})$ (extended to $\R^d$ by zero), \cite[Corollary 3.2]{RRT23} states
                \begin{align} \label{eq:LowerBoundRRT}
                  E^{\A}_{\epsilon}(\chi;F_\lambda) \geq C \Big(\int_{\R^d} \Big|\AA(\frac{\xi}{|\xi|})(A-B)\Big|^2 |\hat{f}|^2 d\xi + \epsilon \int_\Omega |\nabla f| \Big).
                \end{align}
                Here and in the following, for every $f\in L^2(\R^d)$ we denote its Fourier transform by
                \begin{align*}
                \hat f(\xi) := (2\pi)^{-\frac{d}{2}}\int_{\R^d} e^{-i \xi \cdot x} f(x)dx.
                \end{align*}
                Moreover, we recall the definition of the distance function (cf. \cref{eq:dist} in \cref{defi:index}), and note that $\dist_V$ is positively $1$-homogeneous for $V$ being a finite union of vector spaces.
                Now for the polynomial $p(\xi) = |\AA(\xi)(A-B)|^2$ having the maximal vanishing order $L$ (cf. \cref{defi:index}), we can further bound
                \begin{align*}
                  E^{\A}_\epsilon(\chi;F_\lambda) \geq C \Big( \int_{\R^d} \frac{\dist_V(\xi)^{2L}}{|\xi|^{2L}} |\hat{f}|^2 d\xi + \epsilon \int_\Omega |\nabla f| \Big).
                \end{align*}
                Thus, to control the lack of coercivity near the zero set $V$ of $p$, we can consider the multiplier to be given by $\dist_V(\frac{\xi}{|\xi|})^{2L}$.

		\subsection{Scaling results}
		\label{sec:scaling}
		
                In this subsection, we provide the central estimates for our lower scaling bounds in the case that the symbol of the operator vanishes on a union of vector spaces. Let $\Omega\subset\R^d$ be a bounded set. We will work with functions $f\in L^2(\Omega)$ which we
		identify with their extensions by zero to the full space without mention.

		As a central result in this section, we prove the following bounds.
		\begin{prop}\label{prop:key_estimate}
			Let $d \in \N$, $d \geq 2$. Let $L$ be a positive integer and let $\Omega \subset \R^d$ be an open, bounded Lipschitz domain. Suppose that $V\subset\R^d, V \neq 0$ is a union of finitely many linear spaces of dimension at most $d-1$. Then the following estimates hold for every $\eta > 1$:
			\renewcommand{\labelenumi}{\textbf{\theenumi}} \renewcommand{\theenumi}{(\roman{enumi})}
			\begin{enumerate}
				\item\label{itm:a} For any $\delta \in (0,1)$ there exists $\alpha  = \alpha(\delta,\Omega,V) \in (0,1)$ such that
				\begin{align*}
				\int_{\dist_V(\xi)\leq \alpha} |\hat f|^2 d\xi \leq \delta \int_{\R^d} |\hat f|^2d\xi \quad\text{for }f\in L^2(\Omega).
                                \end{align*}
                              \item\label{itm:b} For all $\alpha>0$
				\begin{align*}
				\int_{\dist_V(\xi)\geq \alpha,|\xi|\leq \eta}|\hat f|^2 d\xi \leq \left(\frac{\eta}{\alpha}\right)^{2L}\int_{\R^d}\frac{\dist_V(\xi)^{2L}}{{|\xi|}^{2L}}|\hat f|^2d\xi\quad\text{for
				}f\in L^2(\Omega).
                                \end{align*}
                              \item\label{itm:c} We have that
                                \begin{align*}
				\hspace{-19pt} \int_{|\xi|\geq\eta}|\hat f|^2 d\xi\leq C(d) \Vert f \Vert_\infty \eta^{-1} \big(\int_\Omega|\nabla f|+\Vert f \Vert_{\infty} \Per(\Omega)\big) \text{ for }f\in L^{\infty}(\Omega)\cap BV(\Omega).
                                \end{align*}
                              \end{enumerate}
			Summing the three estimates, with the constant $\alpha$ from \cref{itm:a}, and absorbing the right hand side of \cref{itm:a}, we obtain that for any  $f\in L^{\infty}(\Omega)\cap BV(\Omega)$:
			\begin{align*}
			\int_{\R^d}|\hat f|^2 d\xi\leq C \left(\frac{\eta}{\alpha}\right)^{2L}\int_{\R^d}\frac{\dist_V(\xi)^{2L}}{{|\xi|}^{2L}}|\hat f|^2d\xi+C\eta^{-1} \Vert f \Vert_\infty\big(\int_\Omega|\nabla
			f|+\Vert f \Vert_\infty \Per(\Omega)\big),
			\end{align*}
			where the constant $C>0$ depends on $\delta,d$.
		\end{prop}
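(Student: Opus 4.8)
The plan is to establish the three estimates \ref{itm:a}, \ref{itm:b}, \ref{itm:c} separately and then add them, as indicated.

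\textbf{The elementary bound \ref{itm:b}.} On the set $\{\dist_V(\xi)\ge\alpha\}\cap\{|\xi|\le\eta\}$ one has $\dist_V(\xi)/|\xi|\ge\alpha/\eta$, hence the pointwise inequality $1\le(\eta/\alpha)^{2L}\,\dist_V(\xi)^{2L}/|\xi|^{2L}$ holds there. Multiplying by $|\hat f|^2$, integrating over that set, and enlarging the domain of integration on the right-hand side to all of $\R^d$ gives \ref{itm:b}.

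\textbf{The high-frequency bound \ref{itm:c}.} Extending $f\in L^\infty(\Omega)\cap BV(\Omega)$ by $0$, one has $f\in BV(\R^d)\cap L^\infty$ with $\int_{\R^d}|Df|\le\int_\Omega|\nabla f|+\|f\|_\infty\Per(\Omega)$ (the interior part plus the jump across $\partial\Omega$, using the trace bound and $\Per(\Omega)=\mathcal H^{d-1}(\partial\Omega)$ for Lipschitz $\Omega$). For $h\in\R^d$ one has the standard BV difference-quotient bound $\|f(\cdot-h)-f\|_{L^1}\le|h|\int_{\R^d}|Df|$ and $\|f(\cdot-h)-f\|_{L^\infty}\le2\|f\|_\infty$, hence $\|f(\cdot-h)-f\|_{L^2}^2\le2\|f\|_\infty|h|\int_{\R^d}|Df|$; on the Fourier side, by Plancherel, $\int_{\R^d}|\hat f(\xi)|^2|e^{ih\cdot\xi}-1|^2\,d\xi=\|f(\cdot-h)-f\|_{L^2}^2$. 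Averaging this identity over $h$ in the ball $B_{1/\eta}=\{|h|\le1/\eta\}$ and using that $|B_{1/\eta}|^{-1}\int_{B_{1/\eta}}|e^{ih\cdot\xi}-1|^2\,dh=2-2|B_{1/\eta}|^{-1}\int_{B_{1/\eta}}\cos(h\cdot\xi)\,dh\ge c(d)>0$ for all $|\xi|\ge\eta$ — the radial average of $\cos(h\cdot\xi)$ over $B_{1/\eta}$ depending only on $|\xi|/\eta$, being continuous, tending to $0$ at infinity, and being $<1$ away from the origin — yields $c(d)\int_{|\xi|\ge\eta}|\hat f|^2\,d\xi\le2\|f\|_\infty\eta^{-1}\int_{\R^d}|Df|$, which is \ref{itm:c}.

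\textbf{The low-frequency bound \ref{itm:a} (the main point).} Here the trivial bound fails, so one must exploit that $f$ is supported in the fixed bounded set $\Omega$. Since $V$ is a finite union of subspaces, by a union bound it suffices to treat a single linear subspace $W\subset V$ of dimension $\le d-1$; after an orthogonal change of variables we may take $W=\R^{d-k}\times\{0\}$ with $1\le k\le d-1$, so that $\dist_W(\xi)=|\xi''|$ for $\xi=(\xi',\xi'')\in\R^{d-k}\times\R^k$ and $\{\dist_W(\xi)\le\alpha\}$ is the slab $\{|\xi''|\le\alpha\}$. Writing $\hat f(\xi',\xi'')$ as the partial Fourier transform in $x''$ of the function $h(\xi',x''):=(2\pi)^{-(d-k)/2}\int_{\R^{d-k}}f(x',x'')e^{-i\xi'\cdot x'}\,dx'$, and noting that $x''\mapsto h(\xi',x'')$ is supported in the projection $P''\Omega\subset B_R^k$ of $\Omega$ onto the last $k$ coordinates (with $R$ depending only on $\Omega$), Minkowski's integral inequality followed by Cauchy--Schwarz and Plancherel in $x'$ give, uniformly in $\xi''$,
\[
\int_{\R^{d-k}}|\hat f(\xi',\xi'')|^2\,d\xi'\le(2\pi)^{-k}|B_R^k|\,\|f\|_{L^2}^2 .
\]
Integrating over $|\xi''|\le\alpha$ gives $\int_{\dist_W(\xi)\le\alpha}|\hat f|^2\,d\xi\le C(k,R)\,\alpha^k\|f\|_{L^2}^2$; summing over the finitely many subspaces of $V$ and using Plancherel, $\|f\|_{L^2}^2=\int_{\R^d}|\hat f|^2$, one then chooses $\alpha=\alpha(\delta,\Omega,V)>0$ small enough that the resulting total constant is $\le\delta$. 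This is the estimate \ref{itm:a}, and it is the step requiring genuine input (compact support); the remaining two are soft.

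\textbf{Summation.} Fixing $\delta\in(0,1)$ and the corresponding $\alpha$ from \ref{itm:a}, we get $(1-\delta)\int_{\R^d}|\hat f|^2\le\int_{\dist_V(\xi)\ge\alpha}|\hat f|^2\le\int_{\dist_V(\xi)\ge\alpha,\,|\xi|\le\eta}|\hat f|^2+\int_{|\xi|\ge\eta}|\hat f|^2$. Applying \ref{itm:b} to the first summand and \ref{itm:c} to the second, and dividing through by $1-\delta$, yields the claimed inequality with $C$ depending only on $\delta$ and $d$.
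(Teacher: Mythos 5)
Your proposal is correct and, for the nontrivial part \ref{itm:a}, follows essentially the same strategy as the paper's proof: reduce to a single linear subspace, choose coordinates adapted to $V$ and $V^\perp$, and exploit the compact support of $f$ in the $V^\perp$-direction via a Cauchy--Schwarz (equivalently $L^\infty$--$L^2$) bound for the partial Fourier transform, then integrate over the thin slab to produce the small volume factor $\alpha^k$ that allows one to absorb the near-$V$ region by taking $\alpha$ small. The paper packages this as a sliced $\mathrm{ess\,sup}$ bound (its Lemma on slicing); you instead establish a uniform-in-$\xi''$ bound on $\int_{\R^{d-k}}|\hat f(\xi',\xi'')|^2\,d\xi'$. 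These are two organizations of the same estimate. Parts \ref{itm:b} and the final summation are identical to the paper.

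The one genuine difference is item \ref{itm:c}. The paper does not prove it; it is cited from \cite{KKO13}. You supply a self-contained argument: extend $f$ by zero, control $\int_{\R^d}|Df|$ by the interior variation plus the boundary jump $\|f\|_\infty\Per(\Omega)$, interpolate the $L^1$-difference-quotient bound for $BV$ against the trivial $L^\infty$ bound to get $\|f(\cdot-h)-f\|_{L^2}^2\leq 2\|f\|_\infty|h|\int|Df|$, pass to Fourier side by Plancherel, and average in $h$ over $B_{1/\eta}$, using that the radial average of $2-2\cos(h\cdot\xi)$ over that ball is bounded below by a dimensional constant once $|\xi|\geq\eta$. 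This is a correct and complete derivation and has the advantage of making the constant $C(d)$ explicit in principle; the paper's route has the advantage of brevity by deferring to the reference. Everything else you wrote checks out, including the reduction to a single subspace by a union bound and the final reassembly of \ref{itm:a}--\ref{itm:c} into the stated inequality with a constant depending on $\delta$ and $d$.
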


		The proof of \cref{itm:b} is immediate from the definition of the domain of integration on the left hand side. The proof of \cref{itm:c} is known from \cite{KKO13}. The proof of
		\cref{itm:a} requires more attention, so we extract a relevant slicing lemma.
		\begin{lem}\label{lem:slicing}
			Let $f\in L^2(\Omega)$, $1\leq s\leq d$ be a positive integer. Write $\xi=(\xi',\xi'')$ for $\xi'\in\R^s, \xi'' \in \R^{d-s}$. We have that for $\mathcal H^{d-s}$-a.e. $\xi''\in\R^{d-s}$
			\begin{align} \label{eq:slicing}
                          {\mathrm{ess\,sup}_{\xi'\in\R^s}}|\hat f(\xi',\xi'')|^2\leq \Big(\frac{\diam\,\Omega}{2\pi}\Big)^{s}\int_{\R^s}|\hat f(\xi',\xi'')|^2 d\xi'.
			\end{align}
		\end{lem}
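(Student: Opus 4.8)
The plan is to freeze the last $d-s$ frequency variables. After a partial Fourier transform in the remaining $d-s$ spatial variables, the estimate \eqref{eq:slicing} reduces to an $s$-dimensional inequality for a function with \emph{bounded support}, which is then an immediate consequence of Hausdorff--Young, the Cauchy--Schwarz inequality and Plancherel's theorem. So the whole argument is elementary; the only thing to watch is the almost-everywhere bookkeeping.

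Concretely, for $\xi''\in\R^{d-s}$ I would set
\[
  g_{\xi''}(x') := (2\pi)^{-\frac{d-s}{2}} \int_{\R^{d-s}} e^{-i \xi'' \cdot x''} f(x',x'')\,dx'', \qquad x'\in\R^s,
\]
i.e. the $(d-s)$-dimensional Fourier transform of $f(x',\cdot)$ evaluated at $\xi''$. Since $\Omega$ is bounded, $f\in L^1(\Omega)\cap L^2(\Omega)$; by Tonelli, $f(x',\cdot)\in L^1(\R^{d-s})\cap L^2(\R^{d-s})$ for a.e. $x'$, so $g_{\xi''}$ is well defined, and applying Plancherel in the $x''$ variable and integrating in $x'$ gives
\[
  \int_{\R^{d-s}} \|g_{\xi''}\|_{L^2(\R^s)}^2 \, d\xi'' = \int_{\R^s}\!\! \int_{\R^{d-s}} |f(x',x'')|^2 \, dx'' \, dx' = \|f\|_{L^2}^2 < \infty,
\]
hence $g_{\xi''}\in L^2(\R^s)$ for $\mathcal H^{d-s}$-a.e. $\xi''$. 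With the normalisation used in the paper, Fubini (justified by $f\in L^1$) yields the factorisation $\hat f(\xi',\xi'') = \widehat{g_{\xi''}}(\xi')$, where the hat on the right denotes the $s$-dimensional Fourier transform.

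Next I would exploit the support constraint. Writing $\pi\colon\R^d\to\R^s$, $\pi(x',x'')=x'$, the function $g_{\xi''}$ vanishes a.e. outside $\pi(\overline\Omega)$, which is compact with $\diam\,\pi(\overline\Omega)\le\diam\,\Omega$; projecting onto each coordinate axis shows that any set of diameter $D$ in $\R^s$ lies in a product of $s$ intervals of length $D$, so $|\pi(\overline\Omega)|\le(\diam\,\Omega)^s$. For every $\xi''$ with $g_{\xi''}\in L^2(\R^s)$, Cauchy--Schwarz on the bounded support gives $g_{\xi''}\in L^1(\R^s)$ with $\|g_{\xi''}\|_{L^1}\le|\pi(\overline\Omega)|^{1/2}\|g_{\xi''}\|_{L^2}$, so $\widehat{g_{\xi''}}$ is continuous and
\[
  \sup_{\xi'\in\R^s} |\widehat{g_{\xi''}}(\xi')|^2 \le (2\pi)^{-s}\|g_{\xi''}\|_{L^1(\R^s)}^2 \le (2\pi)^{-s}|\pi(\overline\Omega)|\,\|g_{\xi''}\|_{L^2(\R^s)}^2 \le \Big(\tfrac{\diam\,\Omega}{2\pi}\Big)^{s}\|g_{\xi''}\|_{L^2(\R^s)}^2.
\]
Finally, Plancherel in $\R^s$ gives $\|g_{\xi''}\|_{L^2(\R^s)}^2=\int_{\R^s}|\widehat{g_{\xi''}}(\xi')|^2\,d\xi'=\int_{\R^s}|\hat f(\xi',\xi'')|^2\,d\xi'$, and combining this with $\hat f(\cdot,\xi'')=\widehat{g_{\xi''}}$ and $\mathrm{ess\,sup}\le\sup$ produces \eqref{eq:slicing} for $\mathcal H^{d-s}$-a.e. $\xi''$.

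I do not expect a genuine obstacle. The only delicate point is the measure-theoretic bookkeeping behind the two displays above: that $g_{\xi''}$ is well defined, lies in $L^1(\R^s)\cap L^2(\R^s)$, and satisfies the factorisation $\hat f(\cdot,\xi'')=\widehat{g_{\xi''}}$ for almost every $\xi''$ — all of which is handled by Tonelli/Fubini together with Plancherel as indicated.
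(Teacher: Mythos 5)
Your proof is correct and follows essentially the same route as the paper: you define the partial Fourier transform $g_{\xi''}=\mathcal F_{x''}f(\cdot,\xi'')$, observe that it is supported in the projection of $\Omega$ and hence has diameter at most $\diam\,\Omega$, and then run the standard chain $\|\hat g\|_{L^\infty}\le(2\pi)^{-s/2}\|g\|_{L^1}\le(2\pi)^{-s/2}|\supp g|^{1/2}\|g\|_{L^2}$ together with Plancherel, exactly as the paper does in \eqref{eq:Linfty-L2} and its reduction to the slice. The only cosmetic difference is that the paper first states the case $s=d$ and then applies it to the slice, whereas you treat the general $s$ directly and spell out the Tonelli/Plancherel bookkeeping that shows $g_{\xi''}\in L^2(\R^s)$ for a.e.\ $\xi''$; the substance is identical.
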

		\begin{proof}
			It is instructive to first cover the case $s=d$. Then we have that
			\begin{align}\label{eq:Linfty-L2}
                          \begin{split}
				\|\hat f\|_{L^\infty} &\leq (2\pi)^{-\frac{d}{2}}\|f\|_{L^1} \leq (2\pi)^{-\frac{d}{2}} \mathcal L^d( \Omega)^{1/2}\|f\|_{L^2} \\
                                                      & \leq (2\pi)^{-\frac{d}{2}} (\diam \,\Omega)^{d/2}\|f\|_{L^2} = \left(\frac{\diam \,\Omega}{2 \pi}\right)^{d/2}\|\hat f\|_{L^2}.
                                                        \end{split}
			\end{align}
			
			To show the claim for $s \leq d$, we first notice that by Fubini's theorem, the right hand side in \cref{eq:slicing} is finite for $\mathcal H^{d-s}$ a.e. $\xi''$. The required estimate follows from
			\cref{eq:Linfty-L2} applied to $g=\mathcal F_{\xi''}f(\,\cdot\,,\xi'')$, provided we show that $g$ has compact support in $\R^s$ of diameter at most $\diam\, \Omega$. Let $\omega$ be the
			projection of $\Omega$ on $\R^s$, so that $\diam\,\omega\leq \diam\,\Omega$. Moreover, $f(x',x'')=0$ for all $x'\in\R^s\setminus\omega$. Therefore,
			\begin{align*}
			g(x')=(2\pi)^{-\frac{d-s}{2}}\int_{\R^{d-s}}f(x',x'')e^{-ix''\cdot\xi''}dx''=0\quad \text{for } x'\in\R^s\setminus\omega
                        \end{align*}
			so $g$ is supported inside $\omega$. The proof is complete.
		\end{proof}
		We can now return to the proof of \cref{prop:key_estimate} above.
		\begin{proof}[Proof of \cref{prop:key_estimate}]
			It remains to prove \cref{itm:a}. To do this, we first assume that $V$ is a linear space, which we identify with $\R^{d-s}$ for some $1\leq s\leq d-1$. Writing coordinates
			$\R^d\ni\xi=(\xi',\xi'')$ with $\xi'\in\R^s = V^\perp, \xi'' \in \R^{d-s}$, we control, with $\alpha>0$ to be determined,
			\begin{align*}
                          \int_{|\xi'|\leq \alpha}|\hat f|^2d\xi&=\int_{\R^{d-s}}\int_{|\xi'|\leq \alpha}|\hat f(\xi',\xi'')|^2 d\xi'd\xi''\\
                          & \leq C(s)\alpha^s\int_{\R^{d-s}}\mathrm{ess\,sup}_{\xi' \in\R^s}|\hat f(\xi',\xi'')|^2 d\xi''\\
				&\leq C(s)(\frac{1}{2\pi} \alpha\diam\,\Omega)^s\int_{\R^{d-s}}\int_{\R^s}|\hat f(\xi',\xi'')|^2d\xi' d\xi'',
			\end{align*}
			where to obtain the last inequality we use \cref{lem:slicing}; here $C(s)$ denotes the area of the $s-1$ dimensional unit sphere.  This is enough to conclude the proof of this case by
			taking $\alpha = \alpha(\delta,\Omega,V)$ small enough.
			
			In the general case of a finite union of linear spaces, i.e., $V=\bigcup_{j=1}^NV_j$, where each $V_j$ is a linear space, by the previous step, there exists $\alpha = \alpha(\delta,\Omega,V)>0$ such that
			\begin{align*}
			\int_{\dist_{V_j}(\xi)\leq \alpha}|\hat f|^2d\xi\leq \frac{\delta}{N}\int_{\R^d}|\hat f|^2d\xi.
                        \end{align*}
			It follows that
			\begin{align*}
			\int_{\dist_V(\xi)\leq \alpha}|\hat f|^2d\xi\leq \sum_{j=1}^N \int_{\dist_{V_j}(\xi)\leq \alpha}|\hat f|^2d\xi\leq \sum_{j=1}^N \frac{\delta}{N}\int_{\R^d}|\hat f|^2d\xi,
                        \end{align*}
			which concludes the proof.
		\end{proof}
	
With the results of \cref{prop:key_estimate} in hand, we turn to the first lower scaling bounds.		
		
			\begin{prop}[Lower scaling bounds] 
			\label{thm:LowerScalingUnionVS} 
			Let $d,L \in \N, d \geq 2$. Let $\Omega \subset \R^d$ be an open, bounded Lipschitz domain.  Let $V \subset \R^d, V \neq 0$ be a union of
				finitely many linear spaces of dimension at most $d-1$.  For $f \in BV(\R^d;\{-\lambda,0,1-\lambda\})$ for $\lambda \in (0,1)$ with $f = 0$ in $\R^d \setminus \overline{ \Omega}$, $f \in \{1-\lambda,-\lambda\}$ in $\Omega$, we consider the energies given by
				\begin{align*}
					\tilde{E}_{el}(f) := \int_{\R^d} \frac{\dist_V(\xi)^{2L}}{|\xi|^{2L}} |\hat{f}|^2 d\xi, \ \tilde{E}_{surf}(f) := \int_{\Omega} |\nabla f|.
				\end{align*}
				Then there exist $\epsilon_0 = \epsilon_0(d,\lambda,L,\Omega, V) > 0, C = C(d,L,\Omega, V) > 0$ such that for $\epsilon \in (0,\epsilon_0)$ we have the following lower bound
				\begin{align*}
					\tilde{E}_{\epsilon}(f) := \tilde{E}_{el}(f) + \epsilon \tilde{E}_{surf}(f) \geq C \min\{1-\lambda,\lambda\}^2 \epsilon^{\frac{2L}{2L+1}}.
				\end{align*}
			\end{prop}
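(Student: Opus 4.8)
The plan is to combine the interpolation inequality furnished by \cref{prop:key_estimate} with the pointwise lower bound on $f$ coming from its two‑valued structure, and then to optimise in the free parameter $\eta$; since all of the analytic work is already contained in \cref{prop:key_estimate}, what remains is essentially bookkeeping. First I would fix $\delta=\tfrac12$ in \cref{prop:key_estimate}, which pins down $\alpha=\alpha(\Omega,V)\in(0,1)$ and the constant $C=C(d)$ in its concluding estimate. Since $f\in BV(\R^d;\{-\lambda,0,1-\lambda\})$ vanishes outside $\Omega$, its restriction to $\Omega$ lies in $L^\infty(\Omega)\cap BV(\Omega)$ with $\|f\|_\infty\le\max\{\lambda,1-\lambda\}\le 1$, so the concluding estimate of \cref{prop:key_estimate} reads, for every $\eta>1$,
\begin{align*}
\int_{\R^d}|\hat f|^2\,d\xi \;\le\; C\alpha^{-2L}\eta^{2L}\,\tilde E_{el}(f)\;+\;C\eta^{-1}\big(\tilde E_{surf}(f)+\Per(\Omega)\big).
\end{align*}
By Plancherel (with the Fourier normalisation of the excerpt) the left‑hand side equals $\int_\Omega f^2\,dx$, and because $\lambda\in(0,1)$ one has $|f|\ge\min\{1-\lambda,\lambda\}$ a.e.\ in $\Omega$; hence the left‑hand side is bounded below by $c_0:=\min\{1-\lambda,\lambda\}^2|\Omega|>0$.

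Next I would absorb the non‑scaling perimeter term: choosing $\eta\ge\max\{1,\eta_1\}$ with $\eta_1:=2C\Per(\Omega)/c_0$ forces $C\eta^{-1}\Per(\Omega)\le c_0/2$, leaving $C\alpha^{-2L}\eta^{2L}\tilde E_{el}(f)+C\eta^{-1}\tilde E_{surf}(f)\ge c_0/2$. Therefore at least one of the two summands is $\ge c_0/4$, and in either case
\begin{align*}
\tilde E_\epsilon(f)=\tilde E_{el}(f)+\epsilon\,\tilde E_{surf}(f)\;\ge\;\frac{c_0}{4C}\,\min\!\Big\{\alpha^{2L}\eta^{-2L},\ \epsilon\,\eta\Big\}.
\end{align*}
Balancing the two terms inside the minimum suggests the choice $\eta=\alpha^{2L/(2L+1)}\epsilon^{-1/(2L+1)}$, for which $\epsilon\eta=\alpha^{2L/(2L+1)}\epsilon^{2L/(2L+1)}$, and this yields
\begin{align*}
\tilde E_\epsilon(f)\;\ge\;\frac{|\Omega|}{4C}\,\alpha^{\frac{2L}{2L+1}}\min\{1-\lambda,\lambda\}^2\,\epsilon^{\frac{2L}{2L+1}},
\end{align*}
i.e.\ the asserted estimate with $C=C(d,m,\Omega,V)$.

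The one point requiring care — and the reason $\epsilon_0$ depends on $\lambda$ — is that the optimal $\eta$ must be admissible, i.e.\ must satisfy $\eta\ge\max\{1,\eta_1\}$ and $\eta>1$. Since $\eta=\alpha^{2L/(2L+1)}\epsilon^{-1/(2L+1)}\to\infty$ as $\epsilon\to0$, this holds precisely for $\epsilon<\epsilon_0$ with $\epsilon_0$ of order $\alpha^{2L}\max\{1,\eta_1\}^{-(2L+1)}$; as $\eta_1$ scales like $\min\{1-\lambda,\lambda\}^{-2}$ this gives $\epsilon_0=\epsilon_0(d,\lambda,m,\Omega,V)$, as stated. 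I do not expect any serious obstacle beyond this bookkeeping: the substantive estimates — the low‑frequency slicing bound \ref{itm:a}, the elementary mid‑frequency bound \ref{itm:b}, and the high‑frequency $BV$‑bound \ref{itm:c} from \cite{KKO13} — are already packaged in \cref{prop:key_estimate}.
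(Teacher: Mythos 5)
Your proof is correct and follows essentially the same route as the paper's: apply \cref{prop:key_estimate}, use Plancherel together with the two-valuedness of $f$ on $\Omega$ to bound $\int_{\R^d}|\hat f|^2\,d\xi$ from below by $\min\{1-\lambda,\lambda\}^2|\Omega|$, absorb the non-scaling perimeter term by restricting $\epsilon$, and choose $\eta\sim\epsilon^{-1/(2L+1)}$ to balance. The only cosmetic difference is your intermediate ``at least one of the two remaining terms must dominate'' step, whereas the paper substitutes $\eta=\epsilon^{-1/(2L+1)}$ directly and then fixes $\epsilon_0$ at the very end to absorb the perimeter contribution (also using the slightly cruder bound $\alpha^{-2L}\le\alpha^{-2m}$, which is why the constant is allowed to depend on $m$).
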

    			\begin{proof}
				As $V$ is a union of finitely many linear spaces of dimension at most $d-1$, we can apply \cref{prop:key_estimate} for any $\eta >1$. Thus, as $\Vert f \Vert_\infty \leq 1$, there exist a constant $C = C(d)>0$
				independent of $\eta$ and $\alpha = \alpha(\Omega,V) \in (0,1)$ such that
				\begin{align*}
					\int_{\R^d} |\hat{f}|^2 d\xi & \leq C \Big( \left(\frac{\eta}{\alpha}\right)^{2L} \tilde{E}_{el}(f) + \eta^{-1} \tilde{E}_{surf}(f) + \eta^{-1} \Per(\Omega) \Big) \\
					& \leq C \alpha^{-2L} \Big( \eta^{2L} \tilde{E}_{el}(f) + (\eta \epsilon)^{-1} \epsilon \tilde{E}_{surf}(f) + \eta^{-1} \Per(\Omega) \Big).
				\end{align*}
				Taking now $\eta = \epsilon^{-\frac{1}{2L+1}} >1$, it follows that
				\begin{align*}
					\int_{\R^d} |\hat{f}|^2 d\xi & \leq C \epsilon^{-\frac{2L}{2L+1}} \tilde{E}_\epsilon(f) + C \epsilon^{\frac{1}{2L+1}} \Per(\Omega),
				\end{align*}
				for some constant $C = C(d,L,\Omega,V)>0$.
				As $f \in L^2(\Omega;\{1-\lambda,\lambda\})$ we can bound the $L^2$ norm from below, thus by Plancherel's identity we infer
				\begin{align*}
					\int_{\R^d} |\hat{f}|^2 d\xi = \int_{\R^d} |f|^2 dx \geq \min\{1-\lambda,\lambda\}^2 |\Omega|.
				\end{align*}
				Choosing now $\epsilon_0 = \epsilon_0(d,\lambda,L, \Omega, V)$ such that $C \epsilon_0^{\frac{1}{2L+1}} \Per(\Omega) \leq \frac{1}{2} \min\{1-\lambda,\lambda\}^2 |\Omega|$, we obtain
				\begin{align*}
					C^{-1} \frac{1}{2} \min\{1-\lambda,\lambda\}^2 |\Omega|  \epsilon^{\frac{2L}{2L+1}} \leq \tilde{E}_\epsilon(f),
				\end{align*}
				which is the desired inequality.
                              \end{proof}

  \cref{thm:LowerScalingUnionVS} directly leads to the proof of \cref{thm:lower_bound_p}.

                              \begin{proof}[Proof of \cref{thm:lower_bound_p}]
                                By definition of the maximal vanishing order and from \cref{eq:LowerBoundRRT}, there exists a constant $C = C(\A(D),A,B)>0$ such that
                                \begin{align*}
                                  E_{\epsilon}(\chi;F_\lambda) \geq C \Big( \int_{\R^d} \frac{\dist_V(\xi)^{2L}}{|\xi|^{2L}} |\hat{f}|^2 d\xi + \epsilon \int_\Omega |\nabla f|\Big).
                                \end{align*}
                                Here $f \in BV(\Omega;\{1-\lambda,-\lambda\})$ is determined by $\chi = (A-B)f + F_\lambda$ and extended to $\R^d$ by zero.
                                For this we can apply \cref{thm:LowerScalingUnionVS} as by assumption $V$ is a finite union of vector spaces and $V \neq 0$ as $A-B \in \Lambda_\A$.
                                This shows the desired claim.
                              \end{proof}

		\subsection{Applications} 
		\label{sec:Applications} 
		As consequences of the estimates from the previous section, we can deduce lower scaling bounds for the higher order curl and the higher order divergence operators. Both fall into the class of operators for which \cref{prop:key_estimate} is applicable. In order to infer this, we begin by providing lower bound estimates for the associated multipliers.
                
		\begin{lem}[The higher order curl]
			\label{lem:LowerBoundMultipl}
			Let $d,m \in \N, d \geq 2$. Let $\A(D):C^\infty(\R^d; \allowbreak \S{m}) \to C^\infty(\R^d;T^{2m}(\R^d))$ be the operator from \cref{eq:Operator} with its symbol $\AA(\xi)$ given in \cref{eq:Symbol_SVO} and for $\sum_{j=1}^d l_j = m$ let $M = e_1^{\odot l_1} \odot \dots \odot e_d^{\odot l_d}$, $V := \{\xi \in \R^d: \AA(\xi)M = 0\} $, and $L = \max_{j=1,\dots,d} l_j  \leq m$.
			Then,
\begin{align}
\label{eq:zero_set}
V = \bigcup_{j: l_j \neq 0} \vspan(e_j),
\end{align}			
			and there exists a constant $C = C(d,m) > 0$ such that
			\begin{align}
			\label{eq:LowerSymbol1}
				|\AA(\frac{\xi}{|\xi|})M|^2 \geq C \frac{\dist_V(\xi)^{2L}}{|\xi|^{2L}}.
			\end{align}
		\end{lem}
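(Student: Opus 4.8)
The plan is to bound $|\AA(\xi)M|^2$ from below by the squared norm of a single element of a symmetric algebra, written with an explicit factorization, and then to exploit that symmetric algebras are integral domains. For $\xi\neq 0$ fix an orthonormal basis $v_1=\xi/|\xi|,v_2,\dots,v_d$. From \eqref{eq:Symbol_SVO}, evaluating $\AA(\xi)M$ on the slots $(v_{i_1},v_1,\dots,v_{i_m},v_1)$ with $i_1,\dots,i_m\in\{2,\dots,d\}$ gives, after a short computation, $[\AA(\xi)M](v_{i_1},v_1,\dots,v_{i_m},v_1)=2^{-m}|\xi|^m\,M(v_{i_1},\dots,v_{i_m})$. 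On the other hand, writing $M=c_1\odot\dots\odot c_m$ where the tuple $(c_1,\dots,c_m)$ lists $e_j$ with multiplicity $l_j$, and using $\langle c_k,w\rangle=\langle\pi_\xi c_k,w\rangle$ for $w\in\xi^\perp$, where $\pi_\xi$ is the orthogonal projection onto $\xi^\perp$, the restriction of the multilinear form $M$ to $\xi^\perp$ equals
\[
M_\xi:=(\pi_\xi e_1)^{\odot l_1}\odot\dots\odot(\pi_\xi e_d)^{\odot l_d},
\]
a symmetric $m$-tensor supported on the $(d-1)$-dimensional space $\xi^\perp$, with $\|M_\xi\|^2=\sum_{i_1,\dots,i_m=2}^{d}|M(v_{i_1},\dots,v_{i_m})|^2$. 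Since the components above are part of an orthonormal expansion of $\AA(\xi)M$, summing the squares of only those components yields
\[
|\AA(\xi)M|^2\ \geq\ 2^{-2m}\,|\xi|^{2m}\,\|M_\xi\|^2 .
\]

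From this I would first deduce \eqref{eq:zero_set}. If $\xi\in\vspan(e_j)$ with $l_j\geq 1$, then $e_j\circleddash\xi=0$, so $\AA(\xi)M=0$ by \cref{lem:SVsymb}; hence $\bigcup_{j:\,l_j\neq 0}\vspan(e_j)\subseteq V$. Conversely, if $\AA(\xi)M=0$ with $\xi\neq 0$, the previous display forces $M_\xi=0$. The symmetric algebra over $\xi^\perp$ is isomorphic to a polynomial ring in $d-1$ variables, hence is an integral domain, so one of the factors $(\pi_\xi e_j)^{\odot l_j}$ with $l_j\geq 1$ must vanish; this forces $\pi_\xi e_j=0$, i.e. $\xi\in\vspan(e_j)$. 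Thus $V=\bigcup_{j:\,l_j\neq 0}\vspan(e_j)$, which in particular is nonzero since $m\geq 1$.

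For the quantitative estimate \eqref{eq:LowerSymbol1}, by $1$-homogeneity of $\dist_V$ and $m$-homogeneity of $\AA$ it suffices to show $|\AA(\xi)M|^2\geq C\,\dist_V(\xi)^{2L}$ on $\mathbb{S}^{d-1}$. Fix $j_0$ with $l_{j_0}\neq 0$; for $\xi$ near $\pm e_{j_0}$ write $\xi=\pm(\cos\theta\,e_{j_0}+\sin\theta\,\omega)$ with $\omega\in\mathbb{S}^{d-1}\cap e_{j_0}^\perp$ and $\theta\in[0,\theta_0)$ for a geometric constant $\theta_0<\pi/4$; since the lines $\vspan(e_j)$ are mutually orthogonal, $\dist_V(\xi)=\sin\theta$ in this range. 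A direct computation gives $\pi_\xi e_{j_0}=\sin\theta\,(\sin\theta\,e_{j_0}-\cos\theta\,\omega)$ and $\pi_\xi e_i=e_i-\sin\theta\,\langle e_i,\omega\rangle\,\xi$ for $i\neq j_0$, so that
\[
M_\xi=(\sin\theta)^{l_{j_0}}\,g(\theta,\omega),\qquad g(\theta,\omega):=(\sin\theta\,e_{j_0}-\cos\theta\,\omega)^{\odot l_{j_0}}\odot\bigodot_{i\neq j_0}(\pi_\xi e_i)^{\odot l_i}.
\]
The map $g$ is continuous on the compact set $[0,\theta_0]\times(\mathbb{S}^{d-1}\cap e_{j_0}^\perp)$, and $g(0,\omega)=(-1)^{l_{j_0}}\,\omega^{\odot l_{j_0}}\odot\bigodot_{i\neq j_0}e_i^{\odot l_i}$ is a product of nonzero vectors in the integral-domain symmetric algebra over $e_{j_0}^\perp$, hence nonzero; by compactness $|g(\theta,\omega)|\geq c_{j_0}>0$ for $\theta$ small, uniformly in $\omega$. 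Combining with the first step and using $l_{j_0}\leq L$, $\sin\theta\leq 1$,
\[
|\AA(\xi)M|^2\ \geq\ 2^{-2m}\|M_\xi\|^2\ \geq\ 2^{-2m}c_{j_0}^2(\sin\theta)^{2l_{j_0}}\ \geq\ c\,(\sin\theta)^{2L}\ =\ c\,\dist_V(\xi)^{2L}
\]
in a neighbourhood of $\pm e_{j_0}$. On the complement of these finitely many neighbourhoods — a compact subset of $\mathbb{S}^{d-1}$ on which $\AA(\cdot)M$ does not vanish by \eqref{eq:zero_set} — we have $|\AA(\xi)M|^2\geq c_0>0$, while $\dist_V\leq 1$ on $\mathbb{S}^{d-1}$, so $|\AA(\xi)M|^2\geq c_0\,\dist_V(\xi)^{2L}$ there. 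Taking the smallest of the finitely many constants proves \eqref{eq:LowerSymbol1}, with a constant depending only on $d$ and $m$.

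The crux is the first step: the lower bound $|\AA(\xi)M|^2\geq 2^{-2m}|\xi|^{2m}\|M_\xi\|^2$ together with the transparent factorization of $M_\xi$. Once this is in place, the integral-domain property of the symmetric algebra immediately identifies the zero set, and the continuity/compactness argument near each $e_{j_0}$ extracts exactly the maximal vanishing order $L=\max_j l_j$; the remaining bookkeeping — the precise form of the projections $\pi_\xi e_j$ and the resulting power of $\sin\theta$ — is routine.
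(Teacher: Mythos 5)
Your proof is correct and takes a genuinely different, though closely related, route from the paper. The paper's argument rests on the exact factorization $\AA(\xi)M=(a_1\circleddash\xi)\odot\dots\odot(a_m\circleddash\xi)$ from \cref{lem:SVsymb}: near $e_j$ it writes $\tilde\xi=e_j+\rho w$, pulls out the factor $\rho^{l_j}$ using $e_j\circleddash\tilde\xi=\rho\,(e_j\circleddash w)$, and invokes \cref{lem:WaveCone} for the identification of $V$. You instead replace the exact factorization by the one-sided estimate $|\AA(\xi)M|^2\ge 2^{-2m}|\xi|^{2m}\|M_\xi\|^2$, where $M_\xi$ is the restriction of the form $M$ to $\xi^\perp$, and then work entirely with the symmetric algebra over $\xi^\perp$: the integral-domain property yields \eqref{eq:zero_set} without appealing to \cref{lem:WaveCone}, and the factor $(\sin\theta)^{l_{j_0}}$ is extracted from $\pi_\xi e_{j_0}$. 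The two are in fact algebraically parallel, since $|a\circleddash\xi|=\tfrac{1}{\sqrt2}|\xi|\,|\pi_\xi a|$, so both routes extract the same power at $V$; the paper keeps an equality for $\AA(\xi)M$ and only lower-bounds once (through the explicit remainder in \eqref{eq:OptimalL_A}), whereas you lower-bound earlier by discarding all components of $\AA(\xi)M$ except those with the second slot of each pair equal to $\xi/|\xi|$. Your version buys a self-contained re-derivation of the zero set and a cleaner conceptual picture (restriction of $M$ to $\xi^\perp$, integral domain, compactness), at the mild cost of computing the projections $\pi_\xi e_i$ by hand rather than using the prepared symbol formula. Both arguments close with the same near/far compactness decomposition on $\mathbb{S}^{d-1}$ and the same use of $l_{j_0}\le L$ and $\sin\theta\le 1$.

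Two small points worth making explicit in your write-up: (i) the Frobenius norm on $T^{2m}(\R^d)$ is basis-independent, which is what justifies computing it with the adapted orthonormal basis $v_1=\xi/|\xi|,v_2,\dots,v_d$ and dropping the components with some $j_k\ne 1$; (ii) when you conclude $|g(\theta,\omega)|\ge c_{j_0}>0$ for small $\theta$, the argument is: $|g(0,\cdot)|$ is continuous and strictly positive on the compact set $\mathbb{S}^{d-1}\cap e_{j_0}^\perp$, hence bounded below by some $c_1>0$, and by uniform continuity of $g$ on $[0,\theta_0]\times(\mathbb{S}^{d-1}\cap e_{j_0}^\perp)$ one gets $|g|\ge c_1/2$ for $\theta$ small. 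Both are exactly as you indicate, but spelling them out avoids any impression that you are claiming $g\ne 0$ on the whole parameter rectangle.
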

		
		\begin{proof}

We note that the equality in \cref{eq:zero_set} follows from the characterization of the kernel of $\AA(\xi)$ in \cref{lem:WaveCone}. Indeed, by the assertion of \cref{lem:WaveCone} for $M = e_1^{\odot l_1} \odot \dots \odot e_d^{l_d}$ the roots of $\AA(\xi)M=0$ are given by $\xi \in \bigcup_{j: l_j \neq 0}\vspan(e_j)$.

In order to deduce \cref{eq:LowerSymbol1} it suffices to prove that
				\begin{align*}
					\inf_{\xi \in \mathbb{S}^{d-1} \setminus V} \frac{\Big| \AA(\xi) M \Big|^2}{\dist_V(\xi)^{2L}} > C \geq 0
				\end{align*}
				for some constant $C > 0$.  
				
				To show this claim, we fix $\delta \in (0,\frac{1}{2})$
				such that $\dist_V(\xi) \geq \delta$ for
				$\xi \in \mathbb{S}^{d-1}$ implies $|\AA(\xi)M|^2 \geq C$ for some constant $C = C(\delta, \AA) > 0$.  We consider two cases for $\xi \in \mathbb{S}^{d-1} \setminus V$.\\
				Firstly, if $\dist_V(\xi) \geq \delta$, then it holds
					\begin{align*}
						\frac{\Big| \AA(\xi) M \Big|^2}{\dist_V(\xi)^{2L}} \geq \Big| \AA(\xi)M \Big|^2 \geq C > 0.
					\end{align*}
					It thus remains to consider the case that $\dist_V(\xi) < \delta$. In this case there exists $j \in \{1,\dots,m\}$ such that $l_j \neq 0$ with $\dist_V(\xi)^2 = 1 - \xi_j^2$ and therefore also $\min\{|\xi \pm e_j|\} < \varepsilon(\delta)$ with $\varepsilon(\delta) \rightarrow 0$ as $\delta \rightarrow 0$.  Without loss of generality $|\xi - e_j|^2 < \varepsilon$.  Thus,
					there exists $w \perp e_j, |w|=1$ and $\rho \in (0,\varepsilon)$ such that $\xi = \frac{e_{j} + \rho w}{\sqrt{1+\rho^2}}$.  In the following we
					write $\tilde{\xi} = e_j + \rho w$.
					
					Using the structure of $M = e_1^{\odot l_1} \odot \dots \odot e_d^{\odot l_d}$, \cref{eq:AlternativeSymbol}, and that $e_j \circleddash \tilde{\xi} = \rho (e_j \circleddash w)$, we calculate
					\begin{align*}
						\AA(\tilde{\xi})M = \rho^{l_j} (e_1 \circleddash \tilde{\xi})^{\odot l_1} \odot \dots \odot (e_j \circleddash w)^{\odot l_j} \odot \dots \odot (e_d \circleddash \tilde{\xi})^{\odot l_d}.
					\end{align*}
					Moreover, as $\rho < 1$ and as $l_j \neq 0$, we know $\dist_V(\xi) = \rho$ and therefore
					\begin{align} \label{eq:OptimalL_A}
						\frac{\Big|\AA(\xi)M\Big|^2}{\dist_V(\xi)^{2L}} & = (1+\rho^2)^{L-m} \rho^{2l_j - 2L} \Big| (e_1 \circleddash \tilde{\xi})^{\odot l_1} \odot \dots \odot (e_j \circleddash w)^{\odot l_j} \odot \dots (e_d \circleddash \tilde{\xi})^{\odot l_d} \Big|^2.
					\end{align}
					Due to the convergence of $1+\rho^2$ and
					$\Big| (e_1 \circleddash \tilde{\xi})^{\odot l_1} \odot \dots \odot (e_j \circleddash w)^{\odot l_j} \odot \dots \odot (e_d \circleddash \tilde{\xi})^{\odot l_d} \Big|^2$ to non-zero numbers
					for $\delta \to 0$ (and thus $\varepsilon, \rho \to 0$), these are uniformly bounded from below for $\delta > 0$ sufficiently small.  Furthermore, $l_j - L < 0$ and therefore
					$\rho^{2l_j - 2L} \to \infty$ if $l_j \neq L$ or $\rho^{2l_j - 2L} \to 1$ if $l_j = L$, hence also this factor is bounded from below.  In conclusion, also in the second case
					we have the existence of some constant $C = C(\delta,\AA) > 0$ such that
					\begin{align*}
						\frac{\Big|\AA(\xi)M \Big|^2}{\dist_V(\xi)^{2L}} \geq C > 0.
					\end{align*}
					
				Combining both cases for small enough $\delta>0$ shows the claim.  
\end{proof}

		Similarly as for the higher order curl, we deduce a lower order bound for the higher order divergence.
		
		\begin{lem}[The higher order divergence]
			\label{lem:LowerBoundMultipl_div}
			Let $\mathcal{B}(D)$ be given as in \cref{eq:DivergenceOp} and for $l \in \N^d$ with $|l| = m$ let $M = v \otimes e_1^{\odot l_1} \odot \dots \odot e_d^{\odot l_d} \in \R^k \otimes \S{m}$.  We set
				$V = \{\xi \in \R^d: \mathbb{B}(\xi)M = 0\}$ and let $L = m - \min_{j=1,\dots,d} l_j \leq m$.  Then
\begin{align*}
V = \bigcup_{j: l_j \neq 0} \vspan(e_j)^\perp,
\end{align*}				
			and	 there exists a constant $C=C(d,m,v)>0$ such
			that
			\begin{align*}
                          |\mathbb{B}(\frac{\xi}{|\xi|})M|^2 \geq C \frac{\dist_V(\xi)^{2L}}{|\xi|^{2L}}.
			\end{align*}
		\end{lem}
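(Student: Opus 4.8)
The plan is to exploit that, by the computation of the symbol $\mathbb{B}$ carried out right after \eqref{eq:DivergenceOp}, the multiplier $|\mathbb{B}(\xi)M|^2$ is --- up to a fixed positive constant --- a single monomial, so that both assertions become elementary estimates for monomials on $\mathbb{S}^{d-1}$; the structure is the same as in the proof of \cref{lem:LowerBoundMultipl}, but the computation is shorter. First I would record that for $M = v\otimes e_1^{\odot l_1}\odot\dots\odot e_d^{\odot l_d}$ one has $\mathbb{B}(\xi)M = \binom{m}{l}^{-1}\xi^l v$ with $\xi^l = \prod_{j=1}^d\xi_j^{l_j}$, and hence $|\mathbb{B}(\xi)M|^2 = \binom{m}{l}^{-2}|v|^2\prod_{j=1}^d\xi_j^{2l_j}$. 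In particular $\mathbb{B}(\xi)M = 0$ if and only if $\xi_j=0$ for some $j$ with $l_j\neq0$, which gives $V = \bigcup_{j:\,l_j\neq0}\vspan(e_j)^\perp$ (consistent with \cref{rmk:Divergence}), a nonzero finite union of hyperplanes since $|l|=m\geq1$; and since the distance of $\xi$ to $\vspan(e_j)^\perp$ equals $|\xi_j|$, one obtains the explicit formula $\dist_V(\xi) = \min_{j:\,l_j\neq0}|\xi_j|$.

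By $1$-homogeneity of $\xi\mapsto\dist_V(\xi)$ and of $\xi\mapsto|\mathbb{B}(\xi/|\xi|)M|$, it then suffices to prove that
\[
\prod_{j=1}^d\xi_j^{2l_j}\;\geq\; c\,\Big(\min_{j:\,l_j\neq0}|\xi_j|\Big)^{2L}\qquad\text{for all }\xi\in\mathbb{S}^{d-1},
\]
with $c=c(d,m)>0$; the claimed bound then holds with $C := c\,|v|^2\min_{|l|=m}\binom{m}{l}^{-2}$. Set $S:=\{j:l_j\neq0\}$ and $b:=\dist_V(\xi)=\min_{j\in S}|\xi_j|$, and distinguish two cases. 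If $S\neq\{1,\dots,d\}$, then $\min_jl_j=0$, so $L=m$, and simply $\prod_{j\in S}|\xi_j|^{2l_j}\geq b^{2\sum_{j\in S}l_j}=b^{2m}=b^{2L}$, so $c=1$ suffices. If $S=\{1,\dots,d\}$, then every $l_j\geq1$ and $L=m-\min_jl_j$; I fix $j_1$ with $|\xi_{j_1}|=b$, note $b^2\leq\frac1d\sum_j\xi_j^2=\frac1d$, hence $\sum_{j\neq j_1}\xi_j^2\geq\frac{d-1}{d}$ and there exists $j_2\neq j_1$ with $\xi_{j_2}^2\geq\frac1d$. Using $|\xi_j|\geq b$ for all $j$, $|\xi_{j_1}|=b$ and $|\xi_{j_2}|\geq d^{-1/2}$,
\[
\prod_{j=1}^d|\xi_j|^{2l_j}\;\geq\; b^{2l_{j_1}}\, d^{-l_{j_2}}\, b^{2(m-l_{j_1}-l_{j_2})}\;=\;d^{-l_{j_2}}\,b^{2(m-l_{j_2})}\;\geq\;d^{-m}\,b^{2L},
\]
where the last inequality uses $l_{j_2}\geq\min_jl_j$, so $m-l_{j_2}\leq L$, together with $b\leq1$. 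Hence $c=d^{-m}$ works and the lemma follows. As in \cite{RRT22}, this estimate then feeds, via \cref{prop:key_estimate,thm:LowerScalingUnionVS}, into the corresponding $\epsilon^{2L/(2L+1)}$ lower scaling bound for $\di^m$.

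The argument is entirely elementary; the only genuinely non-trivial point is the case $S=\{1,\dots,d\}$, where the crude bound $|\xi_j|\geq b$ for every $j$ is wasteful, as it would only yield the exponent $2m>2L$ on $b$. One then has to use that the constraint $\xi\in\mathbb{S}^{d-1}$ forces some coordinate $\xi_{j_2}$ to stay bounded away from $0$, and to spend this gain on the index $j_2$ so that the remaining product retains only the exponent $2(m-l_{j_2})\leq2L$ in $b$.
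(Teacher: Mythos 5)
Your argument is correct and is essentially the paper's proof: both reduce to the monomial estimate $\xi^{2l}\gtrsim \dist_V(\xi)^{2L}$ on $\mathbb{S}^{d-1}$ by singling out a coordinate with $|\xi_k|\geq d^{-1/2}$ and using its exponent $l_k$ to drop the total degree from $m$ down to $m-l_k\leq L$, with the same constant $d^{-m}$ emerging. The only difference is organizational: the paper runs one uniform chain of inequalities $\dist_V(\xi)^{2L}\leq\dist_V(\xi)^{2(m-l_k)}\leq \xi^{2l}/\xi_k^{2l_k}\leq d^m\xi^{2l}$ with no case distinction and without requiring $k$ to differ from the index attaining the minimum, whereas you separate the case where some $l_j=0$ (so $L=m$ and the crude bound $b^{2m}$ suffices) and, in the complementary case, insist on picking $j_2$ distinct from the minimizer $j_1$; neither refinement is needed, but both are harmless.
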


			\begin{proof}
				By virtue of \cref{lem:SymbolDiv} the symbol $\mathbb{B}(\frac{\xi}{|\xi|})M$ is given by $\mathbb{B}(\xi)(v \otimes e_1^{\odot l_1} \odot \dots \odot e_d^{\odot l_d}) = \binom{m}{l}^{-1} \xi^l v$ and therefore
				$V = \bigcup_{j: l_j \neq 0} \vspan(e_j)^\perp$.  Thus the distance to the zero set is given by
				\begin{align*}
					\dist_V(\xi)^2 = \min_{j: l_j \neq 0} |\xi_j|^2.
				\end{align*}
				
				Moreover, we use that for any $\xi \in \mathbb{S}^{d-1}$ there is $k$ such that $|\xi_k|^2 \geq \frac{1}{d}$ and that $L \geq m - l_k$ and therefore
				\begin{align*}
                                  \min_{j: l_j \neq 0} |\xi_j|^{2L} & \leq \min_{j:l_j \neq 0} |\xi_j|^{2(m-l_k)} = \prod_{p: l_p \neq 0, p \neq k} \min_{j:l_j \neq 0} |\xi_j|^{2l_p} \leq \prod_{p: l_p \neq 0, p \neq k} |\xi_p|^{2 l_p} = \frac{\xi^{2l}}{\xi_k^{2l_k}}\\
                                  & \leq d^m \xi^{2l}.
				\end{align*}
				Using the chain of inequalities above together with the fact that $|\mathbb{B}(\xi)M|^2 \geq C(v) \xi^{2l}$, for $\xi \in \mathbb{S}^{d-1}$ there holds
				\begin{align*}
					|\mathbb{B}(\xi)M|^2 \geq C(d,m,v) \dist_V(\xi)^{2L}
				\end{align*}
				and the claim follows.
			\end{proof}

		 As these two lemmata will be used to derive lower scaling bounds in \cref{sec:Applications}, we comment on the choice of $L$.

			\begin{rmk}
			\label{rmk:opt}
				The values of $L$ in \cref{lem:LowerBoundMultipl} and \cref{lem:LowerBoundMultipl_div} are indeed the maximal vanishing order, cf. \cref{defi:index}.  Let us elaborate on this statement:
                                \renewcommand{\labelenumi}{\textbf{\theenumi}} \renewcommand{\theenumi}{(\roman{enumi})}
				\begin{enumerate}
                                \item For the higher order curl, we obtain the characterization of the constant $L$ as follows. By \cref{lem:LowerBoundMultipl}, we immediately infer that $L = \max_{j=1,\dots,d} l_j$ is an upper bound for the maximal vanishing order. To prove that $L$ coincides with the maximal vanishing order, consider $L' < L$. Working as in the proof of \cref{lem:LowerBoundMultipl} with $L'$ in place of $L$, by \cref{eq:OptimalL_A} and the fact that there exists $j \in \{1,\dots,d\}$ such that $l_j -L' > 0$ , we obtain
                                    \begin{align*}
                                      \inf_{\xi \in \mathbb{S}^{d-1} \setminus V} \frac{|\AA(\xi)M|^2}{\dist_V(\xi)^{2L'}} = 0.
                                    \end{align*}
                                \item Similarly as above, for the higher order divergence, \cref{lem:LowerBoundMultipl_div} yields that $L = m - \min_{j=1,\dots,d}l_j$ is an upper bound for the maximal vanishing order.
                                  Now, let
                                  $L' < L$. Then it holds
					\begin{align*}
						\inf_{\xi \in \mathbb{S}^{d-1} \setminus V} \frac{|\mathbb{B}(\xi)M|^2}{\dist_V(\xi)^{2L'}} = 0.
					\end{align*}
					Indeed, choosing $\xi^{(k)}_j = \frac{1}{k}$ for $j \neq j_0$ (where $j_0$ is an index such that $L = m - l_{j_0}$) and $\xi^{(k)}_{j_0} = \sqrt{1-\frac{d-1}{k^2}}$ yields
					\begin{align*}
						\frac{(\xi^{(k)})^{2l}}{\dist_V(\xi^{(k)})^{2L'}} \leq \frac{k^{-2(m-l_{j_0})}(1-\frac{d-1}{k^2})^{l_{j_0}}}{k^{-2(m-l_{j_0}-1)}} = \frac{(1-\frac{d-1}{k^2})^{l_{j_0}}}{k^2} \to 0.
					\end{align*}
				\end{enumerate}
				This indeed proves that our choices of $L$ in \cref{lem:LowerBoundMultipl} and \cref{lem:LowerBoundMultipl_div} correspond to the maximal vanishing orders for these operators.
			\end{rmk}
		
		With the above observations in hand, for the higher order curl and divergence we then obtain the following lower bound estimates.
                
			\begin{proof}[Proof of lower bounds in \cref{thm:scaling_2D_new}]
				By \cref{eq:LowerBoundRRT} and \cref{lem:LowerBoundMultipl} it holds, with $V$ and $L$ as in \cref{lem:LowerBoundMultipl}, that
				\begin{align*}
					E_{el}(\chi;F_\lambda) \geq C \int_{\R^d} |\AA(\frac{\xi}{|\xi|})(A-B) \hat{f}|^2 d\xi \geq C \int_{\R^d} \dist_V(\frac{\xi}{|\xi|})^{2L} |\hat{f}|^2 d\xi,
				\end{align*}
				with $f = (1-\lambda)\chi_A-\lambda\chi_B$ and a constant $C = C(d,L)>0$.  Extending $f$ outside of $\Omega$ by zero, we can apply \cref{thm:LowerScalingUnionVS} and infer that
				\begin{align*}
					E_{\epsilon}(\chi;F_\lambda) \geq C\min\{1-\lambda,\lambda\}^2 \epsilon^{\frac{2L}{2L+1}}.
				\end{align*}
				This concludes the proof.
			\end{proof}

		Analogously, in the setting of the higher order divergence we infer the following lower bounds:

             \begin{lem}
            \label{lem:LowerScalingDiv}
		Let $d,m \in \N, d \geq 2$ and $l \in \N^d$. Let $\Omega \subset \R^d$ be an open, bounded Lipschitz domain. Let $E_{\epsilon}^{\mathcal{B}}(\chi;F)$ be as above in \cref{eq:energy_total_gen}  with the operator $\B(D) = \di^m$ given in \cref{eq:DivergenceOp}. Then the following scaling results hold:
                Let $A-B = v \otimes e_1^{\odot l_1} \odot e_2^{\odot l_2} \odot \cdots \odot e_d^{\odot l_d}$ for some $v \in \R^k$ and such
			that $\sum_{j = 1}^{d} l_{j} = m$ and let $F_{\lambda}:= \lambda A + (1-\lambda ) B$ for some $\lambda \in (0,1)$. Then there exists $C>0$ and $\epsilon_0>0$ (depending on $d, m, \Omega, v$ and $\epsilon_0$ also depending on $\lambda$) such
			that for $L:=m -\min\limits_{j\in\{1,2,\dots,d\}}l_j \leq m$ and for any $\epsilon \in (0,\epsilon_0)$
			\begin{align*}
				C \min\{1-\lambda,\lambda\}^2 \epsilon^{\frac{2L}{2L+1}} \leq \inf\limits_{\chi \in BV(\Omega;\{A,B\})} E_{\epsilon}^{\mathcal{B}}(\chi; F_{\lambda}) .
			\end{align*}
                      \end{lem}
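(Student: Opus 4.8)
The plan is to follow the scheme of the proof of the lower bounds in \cref{thm:scaling_2D_new} essentially verbatim, substituting the multiplier estimate for the higher order curl (\cref{lem:LowerBoundMultipl}) by its divergence counterpart (\cref{lem:LowerBoundMultipl_div}). The statement is a corollary of the machinery already assembled, so the work is purely a matter of checking hypotheses.

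First I would reduce to the abstract Fourier-side lower bound \eqref{eq:LowerBoundRRT}. Given $\chi = \chi_A A + \chi_B B \in BV(\Omega;\{A,B\})$, set $f := (1-\lambda)\chi_A - \lambda\chi_B$, so that $f \in BV(\Omega;\{1-\lambda,-\lambda\})$ (extended by zero to $\R^d$) and $\chi = (A-B)f + F_\lambda$. Applying \eqref{eq:LowerBoundRRT} to the operator $\mathcal{B}(D) = \di^m$ then yields a constant $C = C(\mathcal{B},A,B) > 0$ with
\[
E_\epsilon^{\mathcal{B}}(\chi;F_\lambda) \geq C\Big( \int_{\R^d} \big| \mathbb{B}(\tfrac{\xi}{|\xi|})(A-B)\big|^2 |\hat f|^2\, d\xi + \epsilon \int_\Omega |\nabla f| \Big).
\]

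Next I would insert the multiplier bound of \cref{lem:LowerBoundMultipl_div}: for $M = A - B = v \otimes e_1^{\odot l_1} \odot \cdots \odot e_d^{\odot l_d}$ (with $v \neq 0$, which is automatic since $A \neq B$) the zero set is $V = \bigcup_{j:\, l_j \neq 0} \vspan(e_j)^\perp$, the exponent is $L = m - \min_j l_j$, and $|\mathbb{B}(\tfrac{\xi}{|\xi|})M|^2 \geq C\,\dist_V(\xi)^{2L}/|\xi|^{2L}$ for some $C = C(d,m,v) > 0$. Before invoking the scaling result I would verify the hypotheses of \cref{thm:LowerScalingUnionVS}: the set $V$ is a finite union of the hyperplanes $\vspan(e_j)^\perp$, hence of linear subspaces of dimension at most $d-1$, and $V \neq \{0\}$ since $d \geq 2$ and at least one $l_j \neq 0$; moreover $1 \leq L \leq m$, because $\min_j l_j \geq 0$ gives $L \leq m$, while $\min_j l_j = m$ would force $\sum_j l_j \geq dm \geq 2m > m$, contradicting $\sum_j l_j = m$, so $L \geq 1$.

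Finally, putting these together and applying \cref{thm:LowerScalingUnionVS} to $f$ with this $V$ and $L$ produces $\epsilon_0 = \epsilon_0(d,\lambda,m,\Omega,V) > 0$ and $C = C(d,m,\Omega,V) > 0$ with $E_\epsilon^{\mathcal{B}}(\chi;F_\lambda) \geq C\min\{1-\lambda,\lambda\}^2 \epsilon^{\frac{2L}{2L+1}}$ for all $\epsilon \in (0,\epsilon_0)$; taking the infimum over $\chi \in BV(\Omega;\{A,B\})$ and absorbing the dependence on $v$ and on $V$ (which is itself determined by $l$) into the constant closes the argument. I do not expect a genuine obstacle here: the only point that needs care is the bookkeeping in the previous paragraph — checking that the exponent $L$ supplied by \cref{lem:LowerBoundMultipl_div} lies in the admissible range $\{1,\dots,m\}$ and that $V$ is a nontrivial finite union of proper linear subspaces — and both follow immediately from $d \geq 2$ (together with $m \geq 1$).
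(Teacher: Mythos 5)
Your proof is correct and follows essentially the same route as the paper: the paper simply cites \cref{thm:lower_bound_p} (with $L = m - \min_j l_j$ and \cref{lem:LowerBoundMultipl_div}), while you unpack that theorem's own proof — the reduction via \eqref{eq:LowerBoundRRT}, the multiplier estimate of \cref{lem:LowerBoundMultipl_div}, and the invocation of \cref{thm:LowerScalingUnionVS}. Your extra bookkeeping verifying $1 \leq L \leq m$ and that $V$ is a nontrivial finite union of proper subspaces is worthwhile and matches the hypotheses the paper tacitly checks.
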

                    As in \cref{thm:scaling_2D_new}, for $d = 2$ this lower bound is optimal for $\lambda = \frac{1}{2}$. In two dimensions the curl and divergence operators only differ by a rotation, therefore this is to be expected once the result for the symmetrized derivative is proved (see \cref{thm:scaling_2D_new}). In \cref{sec:upper} we will further comment on this.

			\begin{proof}[Proof of  \cref{lem:LowerScalingDiv}]
			This is a direct consequence of applying \cref{thm:lower_bound_p} with $L = m - \min\limits_{j=1,\dots,d}l_j$, cf. \cref{lem:LowerBoundMultipl_div}.
			\end{proof}

				\begin{rmk}[Comparison $m$-th order curl and divergence] \label{rmk:comp}
                                  Comparing the results from \cref{lem:LowerBoundMultipl} and \cref{lem:LowerScalingDiv} and noting that the exponents indeed originate from the maximal vanishing order (cf. \cref{rmk:opt}), we observe that since the function $\R \ni t \mapsto \frac{2t}{2t+1} \in \R$ is monotone increasing, the bounds for the higher order curl are, in general, substantially tighter than for the divergence.
                                  Indeed, denoting by $\A(D)$ the higher order curl from \cref{eq:Operator} and by $\mathcal{B}(D)$ the higher order divergence from \cref{eq:DivergenceOp} for $m\in \N$ fixed, $l\in \N^{d}$ with $|l|=m$ and $v\in \R^k$, with the notation from \cref{defi:index}, it follows that
\begin{align*}
L\left[|\mathbb{B}(\xi) v\otimes e_1^{l_1}\odot \cdots \odot e_{d}^{l_d}|^2 \right]:= m - \min\limits_{j=1,\dots,d} l_j \geq \max\limits_{j=1,\dots, d} l_j =: L\left[|\AA(\xi) e_1^{l_1}\odot \cdots \odot e_{d}^{l_d}|^2\right].
\end{align*}
We highlight that this is consistent with the fact that the higher order divergence yields lower bounds for general symbols (cf. \cite[Appendix B]{RRT23}).
\end{rmk}

		\section{Upper Bound Constructions}
		\label{sec:upper}
		
		In this section we provide the arguments for the upper bounds in \cref{thm:scaling_2D_new} in the setting in which $d=2$, $\lambda = \frac{1}{2}$ and with $m\in \N$ general. We emphasize that for $m\in \{1,2\}$ (and general $\lambda \in (0,1)$)
                these results are known (cf. \cite{CC15}). In order to deduce these in the case of higher order tensors we mimic the construction from the setting in \cite{CC15} and
		adapt it correspondingly. We split our discussion into first dealing with the highest possible maximal vanishing order in which $M:=A-B = e_1^{\odot m}$ and then make use of this construction to also infer the result
		for the intermediate cases $M:=A-B = e_1^{\odot l} \odot e_2^{\odot(m-l)}$, $l \in \{1,\dots,m-1\}$.
		
		In what follows, for convenience, we introduce the following notational convention:
		
                \begin{con} \label{rmk:Notation-2D}
                   In order to simplify the notation for $d=2$, we use the symmetries of the tensors and define for $M \in \textup{Sym}(\R^2;m)$
                    \begin{align*}
                      \tilde{M}_0 = M_{1\dots1}, \ \tilde{M}_1 = M_{1\dots12}, \ \tilde{M}_k = M_{1\dots12\dots2},\  \tilde{M}_m = M_{2\dots2},
                    \end{align*}
                    where for $\tilde{M}_k$ there are $k$ many twos and $m-k$ many ones in the index.  Thus, the `new' index counts how many twos appear in the index as by symmetry of $M$ the order of the indices $1$ and $2$ does not matter.  Analogously, we will use this notation for symmetric tensor fields, in particular, for the map $u:\R^2 \to \textup{Sym}(\R^2;m)$ and the potential $v:\R^2 \to \textup{Sym}(\R^2;m-1)$, cf. \cref{eq:potential}.  With this notation it
                    holds for $k =1,\dots,m-1$
                    \begin{align*}
                      \tilde{u}_0 = \p_1 \tilde{v}_1,\ \tilde{u}_k = \frac{m-k}{m} \p_1 \tilde{v}_{k} + \frac{k}{m} \p_2 \tilde{v}_{k-1},\ \tilde{u}_m = \p_2 \tilde{v}_{m-1}.
                    \end{align*}
			Indeed, here we have used that for $(i_1 \dots i_m)=(1\dots12\dots2)$
			\begin{align*}
				\tilde{u}_k &= u_{1\dots12\dots2} = u_{i_1 \dots i_m} =  \frac{1}{m!} \sum_{\tau \in \mathfrak{S}_m} \p_{i_{\tau(1)}} v_{i_{\tau(2)} \dots i_{\tau(m)}} \\
				& = \frac{1}{m!} \sum_{\tau \in \mathfrak{S}_m: i_{\tau(1)} = 1} \p_1 \tilde{v}_{k} + \frac{1}{m!} \sum_{\tau \in \mathfrak{S}_m: i_{\tau(1)} = 2} \p_2 \tilde{v}_{k-1} \\
				& = \frac{|\{\tau \in \mathfrak{S}_m: \tau(1) \in \{1,2,\dots,m-k\}\}|}{m!} \p_1 \tilde{v}_k \\
				& \quad + \frac{|\{\tau \in \mathfrak{S}_m: \tau(1) \in \{m-k+1,m-k+2,\dots,m\}\}|}{m!} \p_2 \tilde{v}_{k-1} \\
				& = \frac{(m-k) (m-1)!}{m!} \p_1 \tilde{v}_k + \frac{k (m-1)!}{m!} \p_2 \tilde{v}_{k-1} = \frac{m-k}{m} \p_1 \tilde{v}_{k} + \frac{k}{m} \p_2 \tilde{v}_{k-m}.
			\end{align*}

		As it will be of relevance in our constructions below, we recall that, for $d=2$, by the antisymmetry properties of $\A(D)u$, it holds $\A(D)u = 0$ if and only if
		$[\A(D)u]_{12\dots12} = 0$.  Moreover it holds
		\begin{align}
                  \label{eq:Operator2d} [\A(D)u]_{12\dots12} = \sum_{k=0}^m \Big( (-1)^k 2^{-m} \binom{m}{k} \p_1^k \p_2^{m-k} \tilde{u}_k \Big),
		\end{align}
		as every time we switch an index one and an index two we multiply by a factor of $(-1)$ and there are exactly $\binom{m}{k}$ possibilities to switch $k$ distinct twos and ones.
              \end{con}

		\subsection{Cell construction}
		\label{sec:cell}
		
	          	In this section, we consider the case $M = e_1^{\odot m}$, which has the largest possible maximal vanishing order.  Following \cite{CC15}, we begin with a unit cell construction (\cref{lem:UnitCell}) in which the higher vanishing order
		of our symbols will be turned into higher order scaling properties.
                This construction requires more careful considerations than for the case of first or second order tensors, as for $m \geq 3$, in general, one cell does not suffice to achieve the desired boundary conditions.
                Next, we will introduce a suitable cut-off procedure and combine these ingredients into a branching construction in
		\cref{sec:highest}.
		The construction will be carried out on the level of a potential, i.e., a map $v: \R^2 \to \textup{Sym}(\R^2;m-1)$, and then we set $u = \Ds v$. To ensure that $\tilde{v}_k$, see \cref{rmk:Notation-2D} for the notation, attains  the desired boundary conditions, we will use a suitable reflection argument.

		\begin{lem}[Unit cell construction] 
                  \label{lem:UnitCell}
                  Let $m \in \N$ and $\A(D)$ be as in \cref{eq:Operator} for $d = 2$.  For $0 < l \le h \leq 1$ let $\omega = (0,2^ml) \times (0,h)$. Let
			$A,B \in \textup{Sym}(\R^2;m)$ be such that $A-B = e_1^{\odot m}$ and let $F = \frac{1}{2} A + \frac{1}{2} B$.  Define for
			$u \in L^2_{loc}(\R^2;\textup{Sym}(\R^2;m)), \chi \in BV(\omega;\{A,B\})$
			\begin{align*}
				E_{el}(u,\chi;\omega) := \int_{\omega} |u-\chi|^2 dx, \ E_{surf}(\chi;\omega) = \int_{\omega} |\nabla \chi|.
			\end{align*}
			Then, there exist a potential $v: \omega \to \textup{Sym}(\R^2;m-1)$, a function $f \in BV(\omega;\{\pm \frac{1}{2}\})$
			and a constant $C = C(m, \Vert \gamma \Vert_{C^m}) > 0$ such that with $u = \Ds v + F\in \mathcal{D}_F$ and $\chi := (A-B)f + F \in BV(\omega;\{A,B\})$ it holds
			\begin{align*}
				E_{el}(u,\chi;\omega) \leq C \frac{l^{2m+1}}{h^{2m-1}}, \ E_{surf}(\chi) \leq C h.
			\end{align*}
			Furthermore it holds that $\p_1 \tilde{v}_0 = f$ and the following boundary conditions are satisfied 
                        
			\begin{align*}
                          v(0,y) &= v(2^ml,y) = 0 & y &\in [0,h], \\
                          v(x,0) & = (-\frac{1}{2}x \chi_{[0,\frac{l}{2})}(x) + \frac{1}{2}(x-l) \chi_{[\frac{l}{2},l]}(x)) e_1^{\odot(m-1)} & x & \in [0,l], \\
                          v(x,0) & = - v(x-2^jl,0) & x & \in [2^jl,2^{j+1}l], \\
                          & & & j \in \{ 0,1,\dots,m-1\}, \\
                          v(x,h) & = - \frac{1}{2} v(2x,0) & x & \in [0,2^{m-1}l],\\
                          v(x,h) & = \frac{1}{2} v(2x-2^{m}l,0) & x & \in [2^{m-1}l,2^ml], \\
                          \tilde{v}_k(x,0) & = \tilde{v}_k(x,h) =  0 & x & \in [0,2^ml], \\
                          && &k \in \{1,\dots,m-1\}.
                        \end{align*}
                        
		\end{lem}
		
		\begin{proof}
			We adopt the ideas from the upper bound construction in \cite{CC15} to our context.  To achieve this, we first reduce to the setting of $F = 0$ by subtracting the boundary data:
			\begin{align*}
				E_{el}(u,\chi;\omega) = \int_{\omega} |(u-F) - (\chi-F)|^2 dx.
			\end{align*}
			Hence, without loss of generality we may assume $F = 0$ and therefore $\frac{1}{2}A+\frac{1}{2}B=0$.
			
			Using this we introduce a function $f:\omega \to \{\pm \frac{1}{2}\}$ such that the phase indicator reads $\chi = (A-B) f = f e_1^{\odot m}$.  Plugging this into the elastic energy and recalling \cref{rmk:Notation-2D}, we obtain
					\begin{align*}
                                          E_{el}(u,\chi;\omega) & \leq C(m) \int_\omega |\tilde{u}_0 -f|^2 + \sum_{j=1}^m |\tilde{u}_j|^2 dx \\
                                          & = C(m) \int_\omega |\tilde{u}_0 - f|^2 + \sum_{j=1}^{m-1} |\tilde{u}_j|^2 + |\tilde{u}_m|^2 dx.
			\end{align*}
			
			Let us next outline the idea of constructing the tensor $u$: To this end, we will first fix $\tilde{u}_0$ using the construction from \cite{CC15}, which leads to $|\tilde{u}_0 - f| =
			0$. Iteratively, we will then define the remaining $\tilde{u}_k$ by setting them to zero except for $\tilde{u}_m$.  This will lead to an energy bound of the form
			\begin{align*}
				E_{el}(u,\chi;\omega) \leq C \int_\omega |\tilde{u}_m|^2 dx,
			\end{align*}
			where $\tilde{u}_m$ is determined by all the other components through the constraint $\A(D) u = 0$.
                        In comparison to the cases of $m=1,2$, i.e., of the curl and curl curl operator, for higher $m$ the argument to achieve the boundary condition is more involved.
                        To ensure this, we rely on an (iterative) reflection-type argument. We split the proof into several substeps carrying this out successively.

                      \textbf{Step 1: Preliminary definitions and outline of the strategy.}
			In order to implement the outlined strategy, we first define a monotone function $\gamma \in C^{\infty}(\R;[0,1])$ such that 
			\begin{align*}
				\gamma(t) = 0 \text{ for } t \leq \delta, \ \gamma(t) = 1 \text{ for } t \geq 1-\delta \mbox{ for some } \delta \in (0,\frac{1}{4}).
			\end{align*}
                        We start by giving the arguments in a smaller cell $\omega_0 := [0,l] \times [0,h]$ which we will reflect suitably to achieve zero boundary values for the potential.
			We split $\omega_0$ into three subregions given by (cf. \cite{CC15})
			\begin{align*}
				\omega_1 & := \Big\{(x,y) \in \omega_0 : x \in \big[0 , \gamma(\frac{y}{h})\frac{l}{4}\big)\Big\},\\
				\omega_2 & := \Big\{(x,y) \in \omega_0 : x \in \big[ \gamma(\frac{y}{h}) \frac{l}{4}, \frac{l}{2}+\gamma(\frac{y}{h}) \frac{l}{4} \big) \Big\},\\
				\omega_3 & := \Big\{(x,y) \in \omega_0 : x \in \big[ \frac{l}{2}+\gamma(\frac{y}{h}) \frac{l}{4}, l \big]\Big\}.
			\end{align*}
			We define
			\begin{align*}
				f(x,y) := \begin{cases}\frac{1}{2} & (x,y) \in \omega_1 \cup \omega_3, \\ -\frac{1}{2} & (x,y) \in \omega_2. \end{cases}
			\end{align*}
                        This is illustrated in \cref{fig:branching}.
                        
			Our next goal is to set $\tilde{u}_0 = f$ and then to iteratively fix the other components such that $\tilde{u}_k = 0$ for $k\in \{1,\dots,m-1\}$.  To this end, we use a potential
			$v: \omega \to \textup{Sym}(\R^2;m-1)$ and define $\tilde{v}_0$ by integration of $\tilde{u}_0 = \p_1 \tilde{v}_0$ in $\omega_0$. We then fix $\tilde{v}_0$ by a reflection-type argument in the remainder of $\omega = [0,2^ml]\times [0,h]$.
                        More precisely, we define
			\begin{align*}
				\tilde{v}_0(x,y) := \begin{cases}
                                                          \frac{1}{2} x & (x,y) \in \omega_1, \\
                                                          - \frac{1}{2} x + \gamma(\frac{y}{h}) \frac{l}{4} & (x,y) \in \omega_2, \\
                                                          \frac{1}{2} (x-l) & (x,y) \in \omega_3.
				\end{cases}
			\end{align*}
                        Therefore we have in $\omega_0$
			\begin{align*}
				\p_1 \tilde{v}_0(x,y) & = \begin{cases} \frac{1}{2} & (x,y) \in \omega_1 \cup \omega_3, \\ - \frac{1}{2} & (x,y) \in \omega_2, \end{cases}\\
				\p_2 \tilde{v}_0(x,y) & = \begin{cases} 0 & (x,y) \in \omega_1 \cup \omega_3, \\ \gamma'(\frac{y}{h}) \frac{l}{4h} & (x,y) \in \omega_2. \end{cases}
			\end{align*}

			Implementing the above outlined idea, we seek to define the $\tilde{v}_k$ by setting $\tilde{u}_k = 0$ for $k \in \{1,\dots,m-1\}$. As a consequence, we iteratively solve the equations
			\begin{align*} 
			\begin{cases}
				\displaystyle\vspace{1.5ex}
				\frac{1}{m} \p_2 \tilde{v}_0 + \frac{m-1}{m} \p_1 \tilde{v}_1 = 0, \\
				\displaystyle\vspace{1.5ex}
				\frac{k}{m} \p_2 \tilde{v}_{k-1} + \frac{m-k}{m} \p_1 \tilde{v}_k = 0, & k \in \{2,\dots,m-2\}, \\
				\displaystyle
				\frac{m-1}{m} \p_2 \tilde{v}_{m-2} + \frac{1}{m} \p_1 \tilde{v}_{m-1} = 0,
			\end{cases}
			\end{align*}
			with the boundary condition $\tilde{v}_k(0,y) = 0$.  The function $\tilde{v}_k$ is then defined in terms of $\tilde{v}_{k-1}$ by
			\begin{align}
			\label{eq:pot_iterative}
				\tilde{v}_k(x,y) := - \frac{k}{m-k} \int_{0}^x \p_2 \tilde{v}_{k-1}(t,y) dt.
			\end{align}
Notice that $\tilde{v}_0(l,y) = 0$, but for larger $k\ge1$ this in general fails. Thus, we use appropriate reflection arguments in order to attain a zero right-boundary condition.
                          Indeed, using the fact that $-\tilde{v}_0(x,y)$ fulfills $\p_1 (-\tilde{v}_0) \in \{\pm \frac{1}{2}\}$, we implement a reflection-type argument in the cell $[0,2l] \times [0,h]$, i.e., for $x \in [l,2l]$, we set
                          \begin{align*}
                            \tilde{v}_0(x,y) = -\tilde{v}_0(x-l,y).
                          \end{align*}
                          This, by the iterative definition \cref{eq:pot_iterative}, immediately implies that $\tilde v_1(2l,y)=0$.
By exploiting yet another reflaction argument, i.e., for $x \in [2l,4l]$
                          \begin{align*}
                            \tilde{v}_0(x,y) = -\tilde{v}_0(x-2l,y).
                          \end{align*}
and the fact that $\tilde v_1(2l,y)=0$, this "reflection" property carries over to $\tilde{v}_1$ in the sense that for $x \in [2l,4l]$, we then have $\tilde{v}_1(x,y) = -\tilde{v}_1(x-2l,y)$.
                          By this, it then also follows that
                          \begin{align*}
                            \int_0^{4l} \tilde{v}_1(x,y) dx = 0,
                          \end{align*}
which in turn ensures that $\tilde v_2(4l,y)=0$.
                          As this still does not suffice to ensure zero boundary conditions for $\tilde{v}_k$ for $k \geq 3$, we iterate further this reflection in the next steps.
                          
                            \textbf{Step 2: Definition of $\tilde{v}_0: \omega \to \R$.}
                          Building on the outlined reflection idea, we define $\tilde{v}_0: \omega \to \R$ to be given by
                          \begin{align*}
                            \tilde{v}_0(x,y) & :=
                            \begin{cases}
                              \frac{1}{2} x & (x,y) \in \omega_1, \\
                              -\frac{1}{2}x + \gamma(\frac{y}{h}) \frac{l}{4} & (x,y) \in \omega_2, \\
                              \frac{1}{2}(x-l) & (x,y) \in \omega_3,
                            \end{cases}
                                               \quad \text{for } x \in [0,l], \\
                            \tilde{v}_0(x,y) & := - \tilde{v}_0(x-2^jl,y), \quad \text{for } x \in [2^jl,2^{j+1}l], j\in \{0,1,\dots,m-1\}.
                          \end{align*}
                          As $\tilde{v}_0(0,y) = \tilde{v}_0(l,y)$ this function is continuous and well-defined and $\tilde{v}_0(2^j l,y) = 0$ for all $j\in\{1,\dots,m\}$.
                          Furthermore, it holds $\p_1 \tilde{v}_0(x,y) \in \{\pm \frac{1}{2} \}$ and for $y \in [0,\delta h]$
                          \begin{align*}
                            \tilde{v}_0(x,y) &= \begin{cases} - \frac{1}{2} x &x \in [0,\frac{l}{2}), \\ \frac{1}{2}(x-l) & x \in [\frac{l}{2},l),\\
\frac{1}{2}(x-l) & x\in [l, \frac{3}{2}l),\\
-\frac{1}{2}(x- 2l) & x\in [ \frac{3}{2}l, 2l].                           
                             \end{cases} 
                          \end{align*}
                          For $y \in [(1-\delta)h,h]$ and $x \in [0,l]$ we have
                          \begin{align*}
                            \tilde{v}_0(x,y) &= \begin{cases} \frac{1}{2} x & x \in [0,\frac{l}{4}), \\ -\frac{1}{2}(x-\frac{l}{2}) & x \in [\frac{l}{4},\frac{3l}{4}), \\ \frac{1}{2}(x-l) & x \in [\frac{3l}{4},l], \end{cases}
                          \end{align*}
                          which, for $x\in [0,l]$, can be written as
                          \begin{align*}
                            \tilde{v}_0(x,y) = - \frac{1}{2} \tilde{v}_0(2x,0).
                          \end{align*}
                          Iteratively, by using that $\tilde{v}_0(x,y) = -\tilde{v}_0(x-2^jl,y)$ for $x \in [2^jl,2^{j+1}l]$, $j\in\{0,\dots,m-1\}$, this carries on, and hence
                          \begin{align*}
                            \tilde{v}_0(x,h) &= - \frac{1}{2} \tilde{v}_0(2x,0), & x &\in [0,2^{m-1}l], \\
                            \tilde{v}_0(x,h) &= \frac{1}{2} \tilde{v}_0(2x-2^ml,0), & x &\in [2^{m-1}l,2^ml].
                          \end{align*}
                        
In the end, we also set $f:\omega\to\{\pm\frac{1}{2}\}$ as $f(x,y)=\p_1\tilde v_0(x,y)$.

                          \textbf{Step 3: Definition of $\tilde{v}_k:\omega \to \R$.}
                          With $\tilde{v}_0$ defined on $\omega$, we iteratively define $\tilde{v}_1$, $\tilde{v}_2,\dots,$ $\tilde{v}_{m-1}$ on $\omega$ by using the equation for the potential.
                          We define iteratively for $k\in\{1,2,\dots,m-1\}$
                          \begin{align*}
                            \tilde{v}_1(x,y) := -\frac{1}{m-1}\int_0^x \p_2 \tilde{v}_0(t,y) dt, \quad \tilde{v}_k(x,y) := - \frac{k}{m-k} \int_0^x \p_2 \tilde{v}_{k-1}(t,y) dt.
                          \end{align*}
                          As shown in Step 4 below these functions are Lipschitz continuous. 
                          By this definition, it holds for a.e. $(x,y) \in \omega$
                          \begin{align} \label{eq:DiffEqPotential}
                            \frac{k}{m} \p_2 \tilde{v}_{k-1}(x,y) + \frac{m-k}{m} \p_1 \tilde{v}_{k}(x,y) = 0, \quad k\in\{1,2,\dots,m-1\}.
                          \end{align}
                          We now claim that for $k \in \{1,\dots,m-1\}$ the following properties are satisfied
                          \begin{align}
                            \int_0^{2^{k+1}l}\tilde{v}_{k}(t,y) dt &=0, \label{eq:Mean_vk}\\
                            \tilde{v}_k(x,y) &= - \tilde{v}_k(x-2^{j+k}l,y),& \text{for } x &\in [2^{j+k}l,2^{j+1+k}l], \label{eq:Extension_vk}\\
                            &&j&\in \{0,1,\dots,m-1-k\}, \nonumber \\
                            \tilde{v}_k(2^{j+k}l,y) & = 0 = \tilde{v}_k(0,y), &\text{for } j &\in \{ 0,1,\dots,m-k\}. \label{eq:Zero_vk}
                          \end{align}
                        
Properties \cref{eq:Extension_vk,eq:Zero_vk} will be proved by finite induction in Steps 5 and 6 below.  The property \cref{eq:Mean_vk} then directly follows from \cref{eq:Extension_vk}.

                        \textbf{Step 4: Regularity of $\tilde{v}_k$.}
                        We claim that each $\tilde{v}_k$ is Lipschitz continuous. We will first discuss this only in $\omega_0:=[0,l]\times[0,h]$ and will then comment on how this immediately implies Lipschitz continuity on the full cell $\omega$.

Let $G:\omega_0\to\R$ be given in the form $G(x,y) = G_1(x,y) \chi_{\omega_1}(x,y) + G_2(x,y) \chi_{\omega_2} (x,y) + G_3(x,y) \chi_{\omega_3}(x,y)$, with $G_j$ being Lipschitz functions.
Notice that both $f$ and $\p_2\tilde v_0$ have this form.

\smallskip

\emph{Claim 1:}
Let $g(x,y)=\int_0^x G(t,y)dt$, then $g$ is Lipschitz and $\|\nabla g\|_{L^\infty(\omega_0)}\le C(\|G\|_{L^\infty(\omega_0)}+\sum_{j=1}^3\|\p_2 G_j\|_{L^\infty(\omega_j)})$, where $C>0$ depends on $l, \frac{l}{h}, \|\gamma'\|_{L^{\infty}}$.

\smallskip

\emph{Indeed:} for every $(x_1,y_1),(x_2,y_2)\in\omega_0$ we have
\begin{align*}
|g(x_2,y_2)-g(x_1,y_1)| &= \Big|\int_0^{x_2}G(t,y_2)dt-\int_0^{x_1}G(t,y_1)dt\Big| \\
&\le \Big|\int_0^{x_2}G(t,y_2)-G(t,y_1)dt+\int_{x_1}^{x_2}G(t,y_1)dt\Big|.
\end{align*}
For a.e. $t\in(0,l)$, $G(t,\cdot)\in BV((0,h))$ and by the representation theorem of one-dimensional BV functions (see e.g.\ \cite[Theorem 3.28 and Corollary 3.33]{AFP2000})
\begin{align*}
\int_0^{x_2}G(t,y_2)-G(t,y_1)dt &= \int_0^{x_2}\Big(\int_{y_1}^{y_2}\p_2 G(t,s)ds+\int_{y_1}^{y_2}[G](t,y_t) d\delta_{y_t}(s)\Big) dt,
\end{align*}
where $\p_2 G$ denotes the absolutely continuous part of the derivative of $G$, $[G](t,y_t)$ is the amplitude of the jump of $G(t,\cdot)$ in $y_t$ and $\{(t,y_t)\}=(\cup_{j=1}^3\p\omega_j)\cap(\{t\}\times(y_1,y_2))$ which is unique for fixed $t \in (0,l)$ by definition of the sets $\omega_j$ (see \cref{fig:branching}).
Without loss of generality we assume $y_1 \leq y_2$ and obtain that the last term above can be controlled as 
\begin{align*}
  \Big|\int_0^{x_2} \int_{y_1}^{y_2}[G](t,y_t) d\delta_{y_t}(s) dt\Big| &= \left| \int_0^{x_2} [G](t,y_t) \chi_{(y_1,y_2)}(y_t) dt \right| \\
  &\le 4\|G\|_{L^\infty(\omega_0)}\frac{l}{4}\Big|\gamma\Big(\frac{y_2}{h}\Big)-\gamma\Big(\frac{y_1}{h}\Big)\Big|.
\end{align*}
Combining the three formulas above, by regularity of $\gamma$, we obtain
\begin{align*}
|g(x_2,y_2)-g(x_1,y_1)| &\le \Big|\int_0^{x_2}\int_{y_1}^{y_2}\p_2 G(t,s)dsdt\Big|+ \Big| \int_0^{x_2} [G](t,y_t) \chi_{(y_1,y_2)}(y_t) dt\Big| \\
&\quad +\Big|\int_{x_1}^{x_2}G(t,y_1)dt\Big| \\
&\le l|y_2-y_1| \sum_{j=1}^3\|\p_2 G_j\|_{L^{\infty}(\omega_j)}+ \frac{l}{h}\|\gamma'\|_{L^\infty}\|G\|_{L^{\infty}(\omega_0)}|y_2-y_1| \\
&\quad +|x_2-x_1|\|G\|_{L^\infty(\omega_0)},
\end{align*}
which yields the claim by recalling the condition $0<l\le h\le1$.

\smallskip

Since we seek to iterate this, we need to prove that $\p_2 g$ has the same structure as $G$, if we start with $G_j$ sufficiently regular.

\smallskip

\emph{Claim 2:} Let $G$ be as above with $G_j\in C^\infty(\overline{\omega_j})$. Then there exist functions $\tilde g_j\in C^\infty(\overline{\omega_j})$, $j \in \{1,2,3\}$, such that $\p_2 g(x,y)=\tilde g_1(x,y)\chi_{\omega_1}(x,y)+\tilde g_2(x,y)\chi_{\omega_2}(x,y)+\tilde g_3(x,y)\chi_{\omega_3}(x,y)$.

\smallskip

\emph{Indeed:} we have
\begin{align*}
g(x,y)=\int_0^x G(t,y)dt = \sum_{j=1}^3 \int_0^x G_j(t,y)\chi_{\omega_j}(t,y)dt= \sum_{j=1}^3 \int_{((0,x)\times\{y\})\cap\omega_j} G_j(t,y)dt.
\end{align*}
By definition $((0,x)\times\{y\})\cap\omega_j=(\min\{a_j(y), x\}, \min\{b_j(y), x\})\times\{y\}$ for  $a_1=0$, $b_1(y)=a_2(y)=\frac{l}{4}\gamma(\frac{y}{h})$, $b_2(y)=a_3(y)=\frac{l}{2}+\frac{l}{4}\gamma(\frac{y}{h})$, $b_3(y)=l$. Note that $a_j, b_j\in C^\infty$.
Denoting
\begin{align*}
g_j(x,y)=\int_{a_j(y)}^x G_j(t,y)dt,
\end{align*}
which are $C^\infty$ functions in $y$, we get
\begin{align*}
g(x,y)=\sum_{j=1}^3 \left(g_j(x,y)\chi_{\omega_j}(x,y)+g_j(b_j(y),y)\chi_{\tilde\omega_j}(x,y) \right),
\end{align*}
where $\tilde \omega_1:=\omega_2\cup\omega_3$, $\tilde\omega_2:=\omega_3$ and $\tilde\omega_3:=\emptyset$.
The distributional derivative of $g$ in the $x_2$ direction can be computed to read
\begin{align*}
\p_2 g(x,y) &= \sum_{j=1}^3(\p_2g_j(x,y)\chi_{\omega_j}(x,y)-g_j(b_{j}(y),y) b_j'(y)\delta_{b_{j}(y)}(x)) \\
&\qquad\qquad +\Big(\left(\frac{d}{dy}g_j(b_j(y),y) \right)\chi_{\tilde{\omega}_j}(x,y)+g_j(b_{j}(y),y) b_j'(y)\delta_{b_{j}(y)}(x))\Big) \\
&= \sum_{j=1}^3\Big(\p_2g_j(x,y) \chi_{\omega_j}(x,y)+\left(\frac{d}{dy}g_j(b_j(y),y) \right) \chi_{\tilde{\omega}_j}(x,y)\Big).
\end{align*}
Here we used the fact that $g_j(a_j(y),y) = 0$ for $j=1,2,3$ and, with slight abuse of notation, we used $ b_j'(y)\delta_{b_{j}(y)}(x)$ to denote the distribution 
\begin{align*}
T_{b_j'(y) \delta_{b_{j}(y)}(x) }[\varphi] := \int\limits_{\R} b_j'(y) \varphi(b_j(y),y) dy \mbox{ for all } \varphi \in C_c^{\infty}(\omega_0).
\end{align*}
Since $\tilde{\omega}_j$ can be written as suitable unions of $\omega_k$, $k\in \{1,2,3\}$, the claim follows by recalling the higher regularity of $g_j$ and $b_j$ (which also provides an additional argument for Claim 1).

\smallskip

Applying iteratively the two claims above, we conclude the Lipschitz regularity of $\tilde{v}_k$.
We highlight that the $L^\infty$ bounds below are only a priori bounds and will be improved later in Step 8.

\smallskip

\emph{Claim 3:} For every $k\in \{0,1,\dots,m-1\}$, $\tilde v_k:\omega\to\R$ is Lipschitz continuous and $\|\tilde v_k\|_{L^\infty(\omega)}$, $\|\nabla\tilde v_k\|_{L^\infty(\omega)}\leq C(m,\Vert \gamma \Vert_{C^m},l,h)$.

\smallskip

\emph{Indeed:} we reason by finite induction. By Claim 1, applied to $G=f$, $\tilde v_0$ complies with Claim 3, and by Claim 2 it has the desired structure.

Assume now, by induction, that $\tilde v_{k-1}$ complies with Claim 3 and is such that $\p_2 \tilde{v}_{k-1}= \sum_{j=1}^3 G_j^{(k-1)}\chi_{\omega_j}$.
Then applying Claim 1 with $G^{(k-1)}_j=-\frac{k}{m-k}\p_2\tilde v_{k-1} \chi_{\omega_j}$, we obtain that $\tilde v_k$ complies with Claim 3 and has the desired structure.

In the end we notice that, if $(x,y)\in\omega_0+(l,0)$, then
\begin{align*}
\tilde v_k(x,y)=\tilde v_k(l,y)-\frac{k}{m-k}\int_l^x\p_2\tilde v_{k-1}(t,y)dt,
\end{align*}
from which we immediately infer that $\tilde v_k$ is Lipschitz also in $[0,2l]\times[0,h]$ and, again by finite induction, we obtain the claim in the full domain $\omega$.

                          \textbf{Step 5: Induction basis; properties for $\tilde{v}_1$.}
                           Properties \cref{eq:Mean_vk,eq:Extension_vk,eq:Zero_vk} are shown by induction, with the induction basis being given by $k=1$.
                          We note that $\p_2 \tilde{v}_0(x,y) = \gamma'(\frac{y}{h}) \frac{l}{4h} \chi_{\omega_2}(x,y)$ for $x \in [0,l]$.
                          Spelling out the definition $\tilde{v}_1(x,y)= - \frac{1}{m-1} \int_0^x \p_2 \tilde{v}_0(t,y) dt$ yields for $x \in [0,2l]$
                          \begin{align*}
                            \tilde{v}_1(x,y) = \begin{cases} 0 & (x,y) \in \omega_1, \\ \gamma'(\frac{y}{h}) \frac{l}{4h} (x-\gamma(\frac{y}{h}) \frac{l}{4}) & (x,y) \in \omega_2, \\ \gamma'(\frac{y}{h}) \frac{l^2}{8h} & (x,y) \in \omega_3 \cup \omega_1 + (l,0), \\
                                                 - \gamma'(\frac{y}{h}) \frac{l}{4h}(x-\frac{3l}{2}-\gamma(\frac{y}{h})\frac{l}{4}) & (x,y) \in \omega_2 + (l,0), \\
                                                 0 & (x,y) \in \omega_3+(l,0).
                                               \end{cases}
                          \end{align*}
                          Thus, indeed it holds $\tilde{v}_1(0,y) = \tilde{v}_1(2l,y) = 0$.

                          Let us show \cref{eq:Extension_vk,eq:Zero_vk} for $k=1$, i.e., $\tilde{v}_1(x,y) = - \tilde{v}_1(x-2^j 2l,y)$ for $x \in [2^{j+1}l,2^{j+2}l]$ and $\tilde{v}_1(2^{j+1}l,y)=0$ with $j\in\{0,\dots,m-2\}$:
                          To this end, let $x \in [2l,4l]$, then
                          \begin{align*}
                            \tilde{v}_1(x,y) & = -\frac{1}{m-1} \int_0^x \p_2 \tilde{v}_0(t,y) dt = \tilde{v}_1(2l,y) - \frac{1}{m-1}\int_{2l}^x \p_2 \tilde{v}_0(t,y) dt \\
                            &=-\frac{1}{m-1} \int_{2l}^x - \p_2 \tilde{v}_0(t-2l,y) dt = \frac{1}{m-1} \int_0^{x-2l} \p_2\tilde{v}_0(t,y) dt = - \tilde{v}_1(x-2l,y).
                          \end{align*}
                          This also implies $\tilde{v}_1(4l,y)=-\tilde{v}_1(2l,y)=0$.
                          Therefore, assuming by induction that for some $j_0 \in \{0,1,\dots,m-2\}$ it holds $\tilde{v}_1(2^{j+1}l,y)=0$ for $j\in\{0,1,\dots,j_0\}$ and also \cref{eq:Extension_vk} holds for $x\in[0,2^{j_0+1}l]$, then for $x \in [2^{j_0+1}l,2^{j_0+2}l]$ we have
                          \begin{align*}
                            \tilde{v}_1(x,y) &= - \frac{1}{m-1} \int_0^x \p_2 \tilde{v}_0(t,y) dt = \tilde{v}_1(2^{j_0+1}l,y) - \frac{1}{m-1} \int_{2^{j_0+1}l}^{x} \p_2 \tilde{v}_0(t,y) dt \\
                                             & = \frac{1}{m-1} \int_{2^{j_0+1}l}^{x} \p_2 \tilde{v}_0(t-2^{j_0+1}l,y) dt = \frac{1}{m-1} \int_0^{x-2^{j_0+1}l} \p_2 \tilde{v}_0(t,y) dt \\
                            & = -\tilde{v}_1(x-2^{j_0+1}l,y).
                          \end{align*}
                          This shows that \cref{eq:Extension_vk} holds also in $[2^{j_0+1}l,2^{j_0+2}l]$ and, moreover, as a consequence we have $\tilde{v}_{1}(2^{j_0+2}l,y) = -\tilde{v}_{1}(2^{j_0+1}l,y) = 0$.
                     
                          \textbf{Step 6: Induction step; properties for $\tilde{v}_k$.}
                          Assuming now that \cref{eq:Mean_vk,eq:Extension_vk,eq:Zero_vk} hold for $1,\dots,k-1$, we seek to prove that they also hold for $k$.
                          By definition, we have
                          \begin{align*}
                            \tilde{v}_k(x,y) = -\frac{k}{m-k} \int_0^x \p_2 \tilde{v}_{k-1}(t,y) dt.
                          \end{align*}
                          As $\int_0^{2^kl} \tilde{v}_{k-1}(t,y) dt = 0$ by \cref{eq:Mean_vk} for $k-1$, we deduce $\tilde{v}_k(2^kl,y) = 0 = \tilde{v}_k(0,y)$.                          
                        
                          As for $k=1$, we next deduce \cref{eq:Extension_vk,eq:Zero_vk} inductively.
                          We start by giving the argument for \cref{eq:Extension_vk} for $j=0$: Let $x \in [2^kl,2^{k+1}l]$, then
                          \begin{align*}
                            \tilde{v}_k(x,y) &= -\frac{k}{m-k} \int_0^x \p_2 \tilde{v}_{k-1}(t,y) dt = \tilde{v}_k(2^kl,y) - \frac{k}{m-k} \int_{2^kl}^x - \p_2 \tilde{v}_{k-1}(t-2^kl,y)dt \\
                            & = \frac{k}{m-k} \int_0^{x-2^kl} \p_2 \tilde{v}_{k-1}(t,y) dt = - \tilde{v}_k(x-2^kl,y).
                          \end{align*}
                          Again, as in the case $k=1$, this can be continued iteratively. Assume $\tilde{v}_k(2^{j+k}l,y)=0$ for $j\in\{0,1,\dots,j_0\}$ and also that \cref{eq:Extension_vk} holds in $[0,2^{j_0+k}l]$, then we have for $x \in [2^{j_0+k}l,2^{j_0+k+1}l]$
                          \begin{align*}
                            \tilde{v}_k(x,y) &= \tilde{v}_k(2^{j_0+k}l,y) - \frac{k}{m-k} \int_{2^{j_0+k}l}^x \p_2 \tilde{v}_{k-1}(t,y) dt \\
                            &= - \frac{k}{m-k} \int_{2^{j_0+k}l}^x - \p_2 \tilde{v}_{k-1}(t-2^{j_0+k}l,y) dt \\
                            & = \frac{k}{m-k} \int_{0}^{x-2^{j_0+k}l} \p_2 \tilde{v}_{k-1}(t,y) dt = - \tilde{v}_k(x-2^{j_0+k}l,y).
                          \end{align*}
                          Thus \cref{eq:Extension_vk} holds in $[0,2^{j_0+1+k}l]$ and, moreover, by an application of \cref{eq:Extension_vk}, we get $\tilde{v}_k(2^{j_0+1+k}l,y) = - \tilde{v}_k(2^{j_0+k}l,y) = 0$, i.e., \cref{eq:Zero_vk} also holds for $j=j_0+1$.
                          By induction, and the above arguments, we have shown \cref{eq:Mean_vk,eq:Extension_vk,eq:Zero_vk} for $k\in\{1,\dots,m-1\}$, where we stop at order $m-1$ to obtain $v: \omega \to \textup{Sym}(\R^2;m-1)$.

                          \textbf{Step 7: Conclusion of the construction.}
                          Furthermore, as $\gamma'(t) = 0$ for $t \in [0,\delta] \cup [1-\delta,1]$, we have
                          \begin{align*}
                            \p_2 \tilde{v}_0(x,y) = 0, \quad \text{for } y \in [0,\delta h] \cup [(1-\delta)h,h],
                          \end{align*}
                          and therefore for $k\in\{1,\dots,m-1\}$ it holds
                          \begin{align*}
                            \tilde{v}_k(x,y) = 0, \quad \text{for } y \in [0,\delta h] \cup [(1-\delta)h, h].
                          \end{align*}
                          Considering \cref{eq:Zero_vk} for $k \in \{ 1,\dots,m-1\}$ and $j=m-k$, we see that $\tilde{v}_{k}(2^{j+k}l,y) = \tilde{v}_k(2^ml,y) = 0$. In particular, due to the size of $\omega$, it holds $\tilde{v}_{m-1}(2^m l,y) = 0$.

                          \textbf{Step 8: Energy estimates.}
                          In conclusion, we have $v: \omega \to \textup{Sym}(\R^2;m-1)$, with $v(x,y) = 0$ for $x\in\{0,2^ml\}$ and, moreover, $\tilde{v}_k(x,y) = 0$ for $k\in \{1,\dots,m-1\}$ at $y=0$ and $y=h$.
                          Setting now $u = \Ds v$ it holds
                          \begin{align*}
                            u(x,y) =  f(x,y) e_1^{\odot m} + \p_2 \tilde{v}_{m-1}(x,y) e_2^{\odot m}, \quad (x,y) \in \omega,
                          \end{align*}
                            by the iterative definition of $\tilde{v}_k$.
                            Choosing $\chi(x,y) := f(x,y) e_1^{\odot m} \in BV(\omega;\{A,B\})$ we therefore have
                            \begin{align}\label{eq:IntermediateBoundElastic}
                              E_{el}(u,\chi;\omega) \leq C(m) \int_\omega |\p_2 \tilde{v}_{m-1}|^2 dx.
                            \end{align}

                           It remains to provide the bound for $\p_2 \tilde{v}_{m-1}$.
To this end, we consider $\tilde w_k:[0,2^m]\times[0,1]\to\R$ defined as $\tilde w_k(x',y'):=\frac{h^k}{l^{k+1}}\tilde v_k(lx',hy')$.
Notice that $w$ coincides with $v$ when $l=h=1$; it is indeed just a rescaled version of $v$.
Since $w$ is a particular choice of $v$ (when $l=h=1$), it has the same structure, i.e., it solves the same system (with rescaled $f$).
Indeed, for $k\in \{1,\dots,m-2\}$ we obtain
\begin{align*}
&k\p_2 \tilde w_{k-1}(x',y')+(m-k)\p_1 \tilde w_k(x',y') \\
&= k\p_{y'} \Big(\frac{h^{k-1}}{l^{k}}\tilde v_{k-1}(lx',hy')\Big)+(m-k)\p_{x'} \Big(\frac{h^k}{l^{k+1}} \tilde v_k(lx',hy')\Big) \\
&= (k\p_2 \tilde v_{k-1}(lx',hy')+(m-k)\p_1\tilde v_k(lx',hy'))\frac{h^k}{l^{k}}=0.
\end{align*}
We also have
\begin{align*}
\p_1\tilde w_0(x',y')=\p_{x'}\frac{1}{l}\tilde v_{0}(lx',hy')=\p_1\tilde v_{0}(lx',hy')=f(lx',hy').
\end{align*}
By Claim 3 in Step 4, applied to $l=h=1$, we have $\|\tilde w_k\|_{L^\infty},\|\nabla \tilde w_k\|_{L^\infty}\leq C(m,\Vert \gamma \Vert_{C^m})$ for every $k \in \{0,1,\dots, m-1\}$.
Finally,
\begin{align*}
|\p_2\tilde v_k(x,y)|=\frac{l^{k+1}}{{h^k}}|\p_y\tilde w(x/l,y/h)|=\frac{l^{k+1}}{h^{k+1}}|\p_2\tilde w(x/l,y/h)|\leq C(m,\Vert \gamma \Vert_{C^m}) \frac{l^{k+1}}{h^{k+1}}.
\end{align*}

  Plugging this for $k=m-1$ into the above estimate \cref{eq:IntermediateBoundElastic}, yields
 \begin{align*}
    E_{el}(u,\chi;\omega) \leq C(m) \int_\omega \frac{l^{2m}}{h^{2m}} d(x,y) = C(m) 2^m \frac{l^{2m+1}}{h^{2m-1}}.
  \end{align*}

  To bound the surface energy, we note that $f=\p_1\tilde{v}_0$ has only two interfaces in the interior of each $([0,l] + jl) \times [0,h]$ cell.
  Thus, we can bound the surface energy, using $l < h$, by
                            \begin{align*}
                              E_{surf}(\chi;\omega) & = \int_\omega |\nabla \chi| \leq C 2^m \left(2 \int_0^h \sqrt{1+\left( \gamma'(\frac{y}{h}) \frac{l}{4h}\right)^2} dy + 2h \right) \\
                              & \leq C(m) \Vert \gamma' \Vert_\infty h.
                            \end{align*}
                          
		\end{proof}
		
		\begin{rmk} \label{rmk:UnitCellRemark} Let us comment on three technical aspects of the previous construction:
			\begin{itemize}
				\item \emph{The function $\gamma$.} A possible choice for the function $\gamma$ as in the proof of \cref{lem:UnitCell} is given by
				\begin{align*}
					\gamma(t) = \frac{h(t-\delta)}{h(1-\delta-t)+h(t-\delta)},
				\end{align*}
				where $h: \R \to \R$ is the smooth function defined by
				\begin{align*}
					h(t) = \begin{cases} e^{-\frac{1}{t}} & t>0,\\ 0 & t \leq 0. \end{cases}
				\end{align*}
				
				Moreover, we emphasize that for $A-B $ as above, it would be sufficient to require $\gamma \in C^m(\R;[0,1])$ instead of $\gamma \in C^\infty(\R;[0,1])$.
				
				\item \emph{The off-set $\delta$.} We  point out that the off-set $\delta$ on the top and bottom layer in our unit cell construction  is not necessary.  It is however convenient
				as by virtue of this off-set, we immediately obtain that the `corner' arising at the meeting point of $\omega_1$ and $\omega_2$ does not result in loosing the $\A$-freeness of $u$. Thus, later
				when we combine the unit cell constructions into a self-similarly refining construction, we will automatically obtain compatibility at these corners.  If we would choose $\delta = 0$, we would  need to require
				$\gamma^{(k)}(t) = 0$ for $ t \in \{0,1\}$ for $k \in\{ 1,\dots,m\}$, cf. \cite{CC15}.

                                Moreover, by this $\delta$ off-set, we can directly observe, that $\tilde{v}_k(x,0) = \tilde{v}_k(x,h) =0$ for $k\in \{1,\dots,m-1\}$.
                                
                                \item In \cref{lem:UnitCell}, we could reduce the size of $\omega$ to $[0,2^{m-1}l] \times [0,h]$, as we do not require $\int_0^{2^ml} \tilde{v}_{m-1}(t,y)dt = 0$, but it is sufficient for our construction to have $\tilde{v}_{m-1}(2^{m-1}l,y) = 0$, as we are interested in Dirichlet boundary data.
                                  
			\end{itemize}
		\end{rmk}
		
		With the unit-cell construction in hand, we proceed to the definition of a suitable cut-off layer which will be used in the top and bottom boundary regions of our self-similarly refining construction in the next section.
		
		\begin{lem}[Cut-off layer] \label{lem:UpperCutOff} Let $m \in \N$ and $\A(D),A,B, F$ be as in \cref{lem:UnitCell}.  For $0 < h \leq c l \leq 1$ for some constant $c > 0$, let
                  $\omega = (0,2^ml) \times (0,h)$ and let $E_{el}(\cdot,\cdot;\omega),E_{surf}(\cdot;\omega)$ be as in \cref{lem:UnitCell}.  Then there exists a potential $v: \omega \to \textup{Sym}(\R^2;m-1)$ such that
                  
                    \begin{align*}
                      v(0,y) & = v(2^ml,y) = 0, &\text{for } y & \in [0,h],\\
                      v(x,h) & = 0, &\text{for } x &\in [0,2^ml], \\
                      v(x,0) & = \begin{cases} -\frac{1}{2}x e_1^{\odot (m-1)} & x \in [0,\frac{l}{2}), \\  \frac{1}{2}(x-l) e_1^{\odot (m-1)} & x \in [\frac{l}{2},l],\end{cases} &\text{for } x &\in [0,l], \\
                      v(x,0) & = - v(x-2^jl,0), & \text{for } x &\in [2^jl,2^{j+1}l], j\in\{0,1,\dots,m-1\}.
			\end{align*}
                        
			Moreover, there exist $f \in BV(\omega;\{\pm \frac{1}{2}\})$ and a constant $C = C(m)>0$ such that $\chi = (A-B)f + F$, $u = \Ds v + F$ and
			\begin{align*}
				E_{el}(u,\chi;\omega) \leq C \frac{l^{2m+1}}{h^{2m-1}}, \ E_{surf}(\chi;\omega) \leq C h.
			\end{align*}
                      \end{lem}
		
		\begin{proof}
			We consider a smooth cut-off function $\phi: [0,\infty) \to [0,1]$ such that $\phi(t) = 1$ for $0 \leq t < \frac{1}{2}$ and $\phi(t) = 0$ for $t>\frac{3}{4}$, e.g., a function of a similar form as in
			\cref{rmk:UnitCellRemark}.  Let
                        
			\begin{align*}
				v& : \omega \to \textup{Sym}(\R^2;m-1), \\
				v(x,y) & = \phi(\frac{y}{h}) \tilde{f}(x) e_1^{\odot (m-1)},
			\end{align*}
			where for $x \in [0,l]$
			\begin{align*}
                          \tilde{f}(x) = \begin{cases} - \frac{1}{2}x & x \in [0,\frac{l}{2}), \\  \frac{1}{2}x - \frac{1}{2} l & x \in [ \frac{l}{2},l),
                                         \end{cases}
                                               \end{align*}
                                               $\tilde{f}(x) = -\tilde{f}(x-2^jl)$ for $x \in [2^jl,2^{j+1}l]$ for $j\in \{0,1,\dots,m-1\}$, and $\tilde{f}(x) = 0$ for $x \notin [0,2^ml]$.
Notice that $\tilde f$ is continuous in $[0,2^ml]$ since $\tilde f(0,y)=\tilde f(l,y)=0$.
                                               It is direct, that $v(x,y) = 0$ for $y > \frac{3}{4}h$ and that the properties for $y=0$ hold.
                                             Moreover, we have $|\tilde{f}(x)| \leq \frac{l}{4}$.
                                           The phase indicator is given by
                        \begin{align*}
                          \chi(x,y) = \Ds(\tilde{f}(x) e_1^{\odot (m-1)}) \allowbreak + F = f (A-B) + F,
                        \end{align*}
                        where
			$f = \tilde{f}' \in \{\pm \frac{1}{2}\}$.  As $\phi(\frac{y}{h}) \tilde{f}(x) e_1^{\odot(m-1)} = 0$ for $ y \geq \frac{3}{4}h$ or for $x \notin [0,2^ml]$ the boundary data condition is
			fulfilled.
			
			It thus remains to provide the estimate for the energy contribution in $\omega$.  For the elastic energy, we notice that with $u = \Ds v + F$
			\begin{align*}
                          |u - \chi|^2 &= \left|\Ds \Big((\phi(\frac{y}{h})-1)\tilde{f}(x) e_1^{\odot(m-1)}\Big) \right|^2 \\
                          &\leq C(m) \Big( \frac{1}{4} |1-\phi(\frac{y}{h})|^2 + |\tilde{f}(x) \frac{1}{h} \phi'(\frac{y}{h})|^2 \Big) \\
                          & \leq C(m)(1 + \frac{l^2}{h^{2}}).
			\end{align*}
			Thus,
			\begin{align*}
				E_{el}(u, \chi;\omega) \leq C(m) (1+\frac{l^2}{h^{2}}) l h \leq C(m) \frac{l^3}{h} \leq C(m) \frac{l^{2m+1}}{h^{2m-1}},
			\end{align*}
			as $h \leq c l$.  Moreover, since the interfaces are given by straight lines and since $\chi$ is bounded, we also have
			\begin{align*}
				E_{surf}(\chi;\omega) \leq C(m) h.
			\end{align*}
			Combining the two bounds hence yields the desired result.
		\end{proof}
		
		\subsection{Highest vanishing order}
		\label{sec:highest}
		
		With the unit cell construction and the cut-off function the two central ingredients of our construction are in place. We now combine these into the usual branching construction. For the construction to work, we make use of the fact that by the choice $\lambda = \frac{1}{2}$ we have that  for $v$ given in \cref{lem:UnitCell} the function $-v$ still satisfies the desired properties, most importantly $\p_1 (-\tilde{v}_0) = \tilde{u}_0 \in \{\pm \frac{1}{2}\}$.
		
		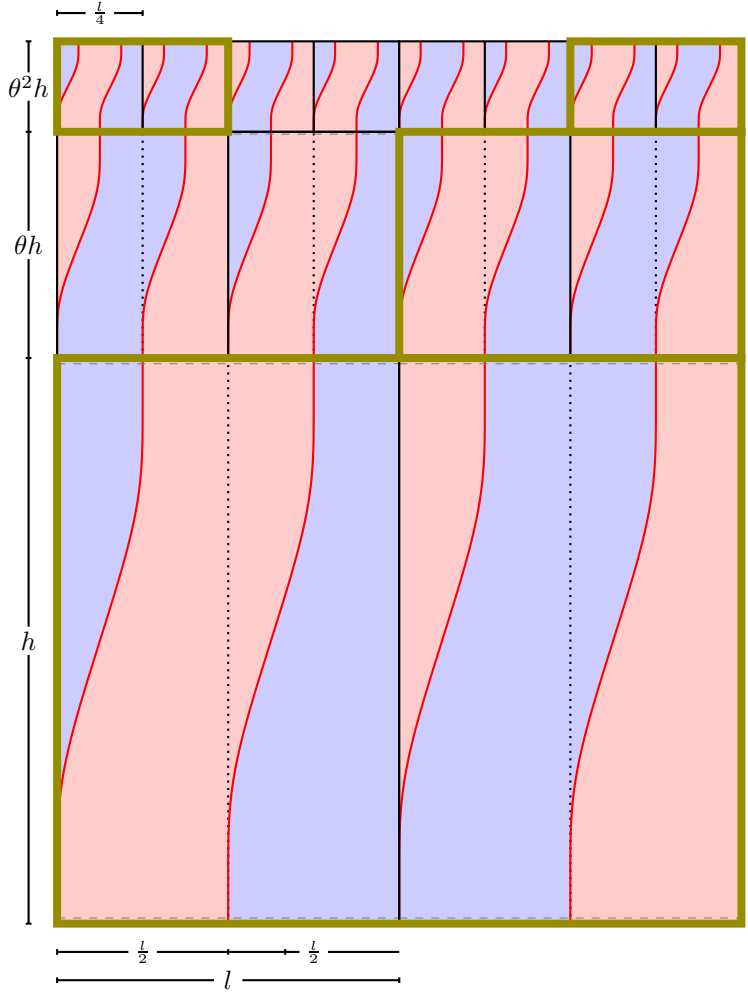
\begin{figure}
                  \centering
			\begin{tikzpicture}[thick, scale = 0.75]

				\fill[draw=none, color = blue!20] (0,0) -- (6,0) -- (6,15.6) -- (0,15.6) -- cycle;
				
                                \foreach \x in {0,4.5}
                                     \fill[draw=none, color = red!20, domain=0.21:1.4, smooth, variable=\y] (\x,14) -- plot({\x+3/(8*exp(8/(5*\y-0.08)-8/(7.92-5*\y))+8)},{\y+14}) -- (\x+0.375,15.6) -- (\x+1.125,15.6) -- plot({\x+0.75+3/(8*exp(-8/(5*\y-0.08)+8/(7.92-5*\y))+8)},{15.6-\y}) -- (\x+0.75,14) -- cycle;

                                \foreach \x in {1.5,3}
                                     \fill[draw=none, color = red!20, domain=0.21:1.58, smooth, variable=\y] (\x,14) -- plot({\x+3/(8*exp(8/(5*\y-0.08)-8/(7.92-5*\y))+8)},{\y+14}) -- (\x+0.375,15.6) -- (\x,15.6) -- cycle;

                                \foreach \x in {2.25,3.75}
                                     \fill[draw=none, color = red!20, domain=0.21:1.58, smooth, variable=\y] (\x,14) -- plot({\x+3/(8*exp(8/(5*\y-0.08)-8/(7.92-5*\y))+8)},{\y+14}) -- (\x+0.375,15.6) -- (\x+0.75,15.6) -- (\x+0.75,14) -- cycle;
                                
                                \fill[draw=none, color = red!20, domain=0.5:3.9, smooth, variable=\y] (0,10) -- plot({3/(4*exp(1/(0.25*\y-0.01)-1/(0.99-0.25*\y))+4)},{\y+10}) -- (0.75,14) -- (0,14) -- cycle;
				
                                \fill[draw=none, color = red!20, domain=0.5:3.9, smooth, variable=\y] (1.5,10) -- plot({1.5+3/(4*exp(1/(0.25*\y-0.01)-1/(0.99-0.25*\y))+4)},{\y+10}) -- (2.25,14) -- (3,14) -- (3,10) -- cycle;				
                                \fill[draw=none, color = red!20, domain=0.5:3.5, smooth, variable=\y] (3,10) -- plot({3+3/(4*exp(1/(0.25*\y-0.01)-1/(0.99-0.25*\y))+4)},{\y+10}) -- (3.75,14) -- (5.25,14) -- plot({4.5+3/(4*exp(-1/(0.25*\y-0.01)+1/(0.99-0.25*\y))+4)},{14-\y}) -- (4.5,10) -- cycle;

                                \fill[draw=none, color = red!20, domain=1.05:8.9, smooth, variable=\y] (0,0) -- plot({1.5/(exp(1/(0.1*\y-0.01)-1/(0.99-0.1*\y))+1)},{\y}) -- (1.5,10) -- (4.5,10) -- plot({3+1.5/(exp(-1/(0.1*\y-0.01)+1/(0.99-0.1*\y))+1)},{10-\y}) -- (3,0) -- cycle;
				
				\draw[dotted] (3,0) -- (3,10); \draw[dotted] (1.5,10) -- (1.5,14); \draw[dotted] (4.5,10) -- (4.5,14);

                                \foreach \x in {0,0.75,1.5,2.25,3,3.75,4.5,5.25}
                                     \draw[domain=0.21:1.58, smooth, variable=\y, red] (\x,14) -- plot({\x+3/(8*exp(8/(5*\y-0.08)-8/(7.92-5*\y))+8)},{\y+14}) -- (\x+0.375,15.6);
                               
                                \foreach \x in {0,1.5,3,4.5}
                                     \draw[domain=0.5:3.9, smooth, variable=\y, red] (\x,10) -- plot({\x+3/(4*exp(1/(0.25*\y-0.01)-1/(0.99-0.25*\y))+4)},{\y+10}) -- (\x+0.75,14);
                                
                                \draw[domain=1.05:9.85, smooth, variable=\y, red] (0,0) -- plot({1.5/(exp(1/(0.1*\y-0.01)-1/(0.99-0.1*\y))+1)},{\y}) -- (1.5,10);
                                \draw[domain=1.05:9.85, smooth, variable=\y, red] (3,0) -- plot({3+1.5/(exp(1/(0.1*\y-0.01)-1/(0.99-0.1*\y))+1)},{\y}) -- (4.5,10);
								
				\foreach \y in {0.1, 9.9, 10.04, 13.96}: \draw[dashed, gray, thin] (0,\y) -- (6,\y);
				
				\draw (0,0) -- (0,10) -- (6,10) -- (6,0) -- cycle;
                                \draw (0,10) -- (0,14) -- (6,14) -- (6,10);
                                \draw (3,10) -- (3,14);
                                \draw (0,14) -- (0,15.6) -- (6,15.6) -- (6,14) -- cycle;
                                \foreach \x in {1.5,3,4.5}
                                     \draw (\x,14) -- (\x,15.6);

                                \begin{scope}[shift={(6,0)}]
                                   	
                                  \fill[draw=none, color = red!20] (0,0) -- (6,0) -- (6,15.6) -- (0,15.6) -- cycle;
				
                                \foreach \x in {0,4.5}
                                     \fill[draw=none, color = blue!20, domain=0.21:1.4, smooth, variable=\y] (\x,14) -- plot({\x+3/(8*exp(8/(5*\y-0.08)-8/(7.92-5*\y))+8)},{\y+14}) -- (\x+0.375,15.6) -- (\x+1.125,15.6) -- plot({\x+0.75+3/(8*exp(-8/(5*\y-0.08)+8/(7.92-5*\y))+8)},{15.6-\y}) -- (\x+0.75,14) -- cycle;

                                \foreach \x in {1.5,3}
                                     \fill[draw=none, color = blue!20, domain=0.21:1.58, smooth, variable=\y] (\x,14) -- plot({\x+3/(8*exp(8/(5*\y-0.08)-8/(7.92-5*\y))+8)},{\y+14}) -- (\x+0.375,15.6) -- (\x,15.6) -- cycle;

                                \foreach \x in {2.25,3.75}
                                     \fill[draw=none, color = blue!20, domain=0.21:1.58, smooth, variable=\y] (\x,14) -- plot({\x+3/(8*exp(8/(5*\y-0.08)-8/(7.92-5*\y))+8)},{\y+14}) -- (\x+0.375,15.6) -- (\x+0.75,15.6) -- (\x+0.75,14) -- cycle;
                                
                                \fill[draw=none, color = blue!20, domain=0.5:3.9, smooth, variable=\y] (0,10) -- plot({3/(4*exp(1/(0.25*\y-0.01)-1/(0.99-0.25*\y))+4)},{\y+10}) -- (0.75,14) -- (0,14) -- cycle;
				
                                \fill[draw=none, color = blue!20, domain=0.5:3.9, smooth, variable=\y] (1.5,10) -- plot({1.5+3/(4*exp(1/(0.25*\y-0.01)-1/(0.99-0.25*\y))+4)},{\y+10}) -- (2.25,14) -- (3,14) -- (3,10) -- cycle;				
                                \fill[draw=none, color = blue!20, domain=0.5:3.5, smooth, variable=\y] (3,10) -- plot({3+3/(4*exp(1/(0.25*\y-0.01)-1/(0.99-0.25*\y))+4)},{\y+10}) -- (3.75,14) -- (5.25,14) -- plot({4.5+3/(4*exp(-1/(0.25*\y-0.01)+1/(0.99-0.25*\y))+4)},{14-\y}) -- (4.5,10) -- cycle;

                                \fill[draw=none, color = blue!20, domain=1.05:8.9, smooth, variable=\y] (0,0) -- plot({1.5/(exp(1/(0.1*\y-0.01)-1/(0.99-0.1*\y))+1)},{\y}) -- (1.5,10) -- (4.5,10) -- plot({3+1.5/(exp(-1/(0.1*\y-0.01)+1/(0.99-0.1*\y))+1)},{10-\y}) -- (3,0) -- cycle;
				
				\draw[dotted] (3,0) -- (3,10); \draw[dotted] (1.5,10) -- (1.5,14); \draw[dotted] (4.5,10) -- (4.5,14);

                                \foreach \x in {0,0.75,1.5,2.25,3,3.75,4.5,5.25}
                                     \draw[domain=0.21:1.58, smooth, variable=\y, red] (\x,14) -- plot({\x+3/(8*exp(8/(5*\y-0.08)-8/(7.92-5*\y))+8)},{\y+14}) -- (\x+0.375,15.6);
                               
                                \foreach \x in {0,1.5,3,4.5}
                                     \draw[domain=0.5:3.9, smooth, variable=\y, red] (\x,10) -- plot({\x+3/(4*exp(1/(0.25*\y-0.01)-1/(0.99-0.25*\y))+4)},{\y+10}) -- (\x+0.75,14);
                                
                                \draw[domain=1.05:9.85, smooth, variable=\y, red] (0,0) -- plot({1.5/(exp(1/(0.1*\y-0.01)-1/(0.99-0.1*\y))+1)},{\y}) -- (1.5,10);
                                \draw[domain=1.05:9.85, smooth, variable=\y, red] (3,0) -- plot({3+1.5/(exp(1/(0.1*\y-0.01)-1/(0.99-0.1*\y))+1)},{\y}) -- (4.5,10);
								
				\foreach \y in {0.1, 9.9, 10.04, 13.96}: \draw[dashed, gray, thin] (0,\y) -- (6,\y);
				
				\draw (0,0) -- (0,10) -- (6,10) -- (6,0) -- cycle;
                                \draw (0,10) -- (0,14) -- (6,14) -- (6,10);
                                \draw (3,10) -- (3,14);
                                \draw (0,14) -- (0,15.6) -- (6,15.6) -- (6,14) -- cycle;
                                \foreach \x in {1.5,3,4.5}
                                     \draw (\x,14) -- (\x,15.6);
                                  \end{scope}

				\draw (0,-1) -- (2.75,-1);
                                \draw (3.25,-1) -- (6,-1);
                                \node at (3,-1) {$l$};
                                \draw (0,-0.95) -- (0,-1.05);
                                \draw (6,-0.95) -- (6,-1.05);
				
				\draw (0,-0.5) -- (1.25,-0.5);
                                \draw (1.75,-0.5) -- (3,-0.5);
                                \node at (1.5,-0.5) {\tiny{$\frac{l}{2}$}};
                                \draw (0,-0.45) -- (0,-0.55);
                                \draw (3,-0.45) -- (3,-0.55);
				
				\draw (3,-0.5) -- (4.25,-0.5);
                                \draw (4.75,-0.5) -- (6,-0.5);
                                \node at (4.5,-0.5) {\tiny{$\frac{l}{2}$}};
                                \draw (4,-0.45) -- (4,-0.55);

                                \draw (0,16.1) -- (0.5,16.1);
                                \draw (1,16.1) -- (1.5,16.1);
                                \node at (0.75,16.1) {\tiny{$\frac{l}{4}$}};
                                \draw (0,16.05) -- (0,16.15);
                                \draw (1.5,16.05) -- (1.5,16.15);
								
				\draw (-0.5,0) -- (-0.5,4.75);
                                \draw (-0.5,5.25) -- (-0.5,10);
                                \node at (-0.5,5) {$h$};
                                \draw (-0.45,0) -- (-0.55,0);
                                \draw (-0.45,10) -- (-0.55,10);
				
				\draw (-0.5,10) -- (-0.5,11.75);
                                \draw (-0.5,12.25) -- (-0.5,14);
                                \node at (-0.5,12) {$\theta h$};
                                \draw (-0.45,14) -- (-0.55,14);

                                \draw (-0.5,14) -- (-0.5,14.55);
                                \draw (-0.5,15.05) -- (-0.5,15.6);
                                \node at (-0.5,14.8) {$\theta^2 h$};
                                \draw (-0.45,15.6) -- (-0.55,15.6);

                                \draw[line width=3, olive, opacity=0.5] (0,0) rectangle (12,10);
                                \draw[line width=3, olive, opacity=0.5] (6,10) rectangle (12,14);
				\draw[line width=3, olive, opacity=0.5] (9,14) rectangle (12,15.6);
                                \draw[line width=3, olive, opacity=0.5] (0,14) rectangle (3,15.6);
			\end{tikzpicture}
			\caption{An illustration of the branching construction used in \cref{lem:Branching}. The individual unit cell constructions from \cref{lem:UnitCell} are iteratively combined into a
				construction refining in the $e_2$ direction. In blue is the region where $\tilde{u}_0 = \tilde{A}_0$ and red corresponds to $\tilde{u}_0 = \tilde{B}_0$. The dashed horizontal lines depict the region in which we have a simple laminate. As in \cite{CC15} for $m\geq 2$ we need the curves separating the domains to be of a sufficiently high regularity (see the discussion in
				\cref{rmk:UnitCellRemark}). The unit cell and its copies, are highlighted with a green box. Moreover, for $m \geq 3$, we need to do the reflection-type argument outlined in \cref{lem:Branching} to ensure zero boundary values at the left and right.}
                              \label{fig:branching}
                            \end{figure}
		
		\begin{lem} \label{lem:Branching} Let $m \in \N$, $\Omega = (0,1)^2$ and $A,B$ be as in \cref{lem:UnitCell} and  let $F = \frac{1}{2}A + \frac{1}{2}B$.  Consider the
                  operator $\A(D)$ given in \cref{eq:Operator} and let the energy $E_{\epsilon}(u,\chi)$ for $u \in \mathcal{D}_{F}$, defined as in \cref{eq:admissible_gen}, and $\chi \in BV(\Omega;\{A,B\})$, be given by
                  \begin{align*}
                    E_\epsilon(u,\chi) = \int_\Omega |u-\chi|^2 dx + \epsilon \int_\Omega |\nabla \chi|.
                  \end{align*}
                  Then for every $N \in 2^m \N, N \geq 4$ there exist $u: \R^2 \to \textup{Sym}(\R^2;m)$,
			$f \in BV(\Omega;\{\pm \frac{1}{2}\})$ with $\A(D)u = 0$ in $\Omega$, $u = F$ outside $\Omega$, and a constant $C = C(m, \Vert \gamma \Vert_{C^m})>0$ such that for any $\epsilon >0$ it holds
			\begin{align*}
				E_{\epsilon}(u,\chi) \leq C(N^{-2m} + \epsilon N),
			\end{align*}
			where $\chi = f (A-B) + F \in BV(\Omega;\{A,B\})$.
		\end{lem}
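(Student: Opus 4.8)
The plan is to establish the bound by the usual self-similar branching construction in the spirit of \cite{KM92,KM94,CC15}: one assembles rescaled copies of the unit cell from \cref{lem:UnitCell} into successive generations of geometrically refining period, arranged symmetrically about the horizontal mid-line of $\Omega$, and closes the construction off to the exterior datum $F_\lambda$ using the cut-off layer of \cref{lem:UpperCutOff}. A first useful observation is that, since $A-B=e_1^{\odot m}\in\ker\AA(e_1)$, the field may jump between a pure phase and $F_\lambda$ along the $e_1$-direction in an $\A$-free way; hence the exterior condition on the vertical sides $\{x=0\}$, $\{x=1\}$ of $\Omega$ is automatic, and the only genuine matching problem occurs at the two horizontal sides.

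I would fix the geometry as follows. Given $N\ge 4$ large (the finitely many small values of $N$ being trivial, since then $N^{-2m}+\epsilon N\gtrsim 1$), I split $\Omega=(0,1)^2$ symmetrically about $\{y=1/2\}$ and fix an auxiliary exponent $\beta\in(0,1)$. In the upper half, for $j=0,1,\dots,J$ with $J:=\lceil(2m-1)\log_2 N\rceil$, I reserve a horizontal strip $S_j$ of height $h_j:=c_0\,2^{-\frac{(2m-\beta)j}{2m-1}}$, with $c_0=c_0(m,\beta)>0$ chosen so that these strips together with a thin cut-off layer exactly fill $\{1/2<y<1\}$, and tile $S_j$ by $\sim N2^j$ side-by-side rescaled copies of the unit cell of \cref{lem:UnitCell}, each of width $\ell_j:=N^{-1}2^{-j}$ and height $h_j$. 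A short computation gives $\ell_j<h_j$ for all $0\le j\le J$ once $N$ is large, so \cref{lem:UnitCell} applies in each strip; moreover the laminate at the top of the cells of $S_j$ has period $\ell_j/2=\ell_{j+1}$, matching the one at the bottom of the cells of $S_{j+1}$, so the strips can be stacked. Once the period reaches $\ell_J/2\sim N^{-2m}$, I cap it with the cut-off layer of \cref{lem:UpperCutOff} (cells of width $\ell_J/2$ and height $\sim\ell_J/2$), so that $u=F_\lambda$ for $y>1$. In the lower half I proceed identically with the $y$-reflected cell, which is again $\A$-free since $\A(D)$ is covariant under the reflection $y\mapsto -y$; the two halves agree along $\{y=1/2\}$, where each is the same simple period-$N^{-1}$ laminate. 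Finally $u:=\Ds v$ for the concatenated potential $v$ and $\chi:=(A-B)f+F_\lambda$ for the corresponding two-valued $f$, with $v$ vanishing near the vertical sides so that the extension by $F_\lambda$ outside $\Omega$ is admissible.

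The energy estimate is then a geometric summation using \cref{lem:UnitCell}. The elastic energy of $S_j$ is
\begin{align*}
\lesssim_{m,\lambda}(N2^j)\,\frac{\ell_j^{2m+1}}{h_j^{2m-1}}\lesssim_{m,\lambda}N^{-2m}\,2^{-\beta j},
\end{align*}
so, summing over $j$ and over the two halves, the bulk elastic energy is $\lesssim_{m,\lambda}N^{-2m}$, to which the two cut-off layers add a further $\lesssim_{m,\lambda}N^{-2m}$ by \cref{lem:UpperCutOff} at scale $\sim N^{-2m}$. Likewise the surface energy of $S_j$ is
\begin{align*}
\lesssim_{m,\lambda}(N2^j)\,h_j\lesssim_{m,\lambda}N\,2^{-\frac{(1-\beta)j}{2m-1}},
\end{align*}
which sums to $\lesssim_{m,\lambda}N$, the cut-off layers contributing only $\lesssim1$. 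Altogether $E_\epsilon(u,\chi)\lesssim_{m,\lambda}N^{-2m}+\epsilon N$, as claimed.

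The hard part is not this bookkeeping but verifying that the glued field $u=\Ds v$ is genuinely $\A$-free throughout $\Omega$. Compatibility inside each cell is provided by \cref{lem:UnitCell}; along the horizontal interfaces between consecutive generations one must in addition match the traces of the potentials --- amplitudes together with their first $m-1$ derivatives --- and, most delicately, ensure compatibility at the ``corners'' where the branching curves of one generation meet those of the next. This last point is precisely what the off-set $\delta$ and the $C^m$-regularity of the transition profile built into \cref{lem:UnitCell} are meant to control (cf.\ \cref{rmk:UnitCellRemark}); since the cases $m\in\{1,2\}$ are already covered in \cite{CC15}, the real work lies in pushing this corner matching through for general tensor order $m$, all other steps being routine.
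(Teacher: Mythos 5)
Your construction is essentially the same as the paper's: both assemble the unit cells of \cref{lem:UnitCell} into a self-similar branching pattern in the upper half, reflect across $\{y=1/2\}$, and close with the cut-off layer of \cref{lem:UpperCutOff}. Your decay rate $\beta\in(0,1)$ is just the paper's parameter in disguise, via $\theta=2^{-(2m-\beta)/(2m-1)}\in(2^{-2m/(2m-1)},2^{-1})$, and the geometric-series bookkeeping for elastic and surface energies coincides. Your observation that the $\delta$-offset in \cref{lem:UnitCell} makes the gluing automatic (each cell is a plain laminate near its top and bottom, so horizontal interfaces between generations match and no corner issues arise) is exactly the point the paper relies on; you slightly overstate it as "the real work," whereas in fact this is precisely why the gluing is routine once the $\delta$-offset is built in.

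There is, however, one small bookkeeping slip. You stop the branching at $J=\lceil(2m-1)\log_2 N\rceil$, at which point $\ell_J\sim N^{-2m}$, and then claim the cut-off layer has height $\sim\ell_J/2$. But with a \emph{fixed} $c_0=c_0(m,\beta)$, the residual gap above the strips has height $\sim\theta^{J}\sim N^{-(2m-\beta)}$, which for $\beta>0$ is \emph{strictly larger} than $\ell_J/2\sim N^{-2m}$; hence the cut-off cells would need $h>l$, violating the $h\le l$ hypothesis of \cref{lem:UpperCutOff}. The paper avoids this by defining the last branching index $j_0$ as the \emph{maximal} $j$ with $l_j<h_j$ — in your notation $j_0\sim\tfrac{2m-1}{1-\beta}\log_2 N$, strictly larger than your $J$ — at which point the leftover gap $\sim\theta^{j_0+1}$ and the cell width $\sim\ell_{j_0+1}$ become comparable and the cut-off lemma applies (up to a bounded constant). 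Alternatively you may keep your $J$ but take $c_0=c_0(m,\beta,N)\to (1-\theta)/2$ so that the gap is exactly $\ell_J/2$; either fix is a one-line change and does not affect the final exponents.
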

		
		\begin{proof}
			We argue as in \cite{CC15,RRT23,RT22}.  Let $\theta \in (2^{-\frac{2m}{2m-1}},2^{-1})$ and consider the splitting $\Omega = \Omega_+ \cup \Omega_-$ with
			$\Omega_+ = [0,1] \times [\frac{1}{2},1], \Omega_- = [0,1] \times [0,\frac{1}{2}]$.  In what follows, we give the construction of $u$ and $\chi$ on $\Omega_+$.  For given $N \in 2^m\N, N \geq 4$, we
			define
			\begin{align*}
				y_j = 1 - \frac{\theta^j}{2}, \ l_j = \frac{1}{2^jN}, \ h_j = y_{j+1} - y_j, \ j \in \N \cup \{0\},
			\end{align*}
			and set $j_0 \in \N$ as the maximal $j$ satisfying $l_j < h_j$. This is possible as, due to $N \geq 4$, we have $l_0 < h_0$.  In what follows, we use \cref{lem:UnitCell} in the cells $\omega_{j,k}$, see the definition below, to achieve a
			refinement towards $y = 1$.
For the sake of clarity of exposition, we define a construction on $[0,+\infty]\times[\frac{1}{2},1]$ and then restrict it to $\Omega_+$.
			
For this, let $v^{(j)}$ be the map defined on $[0,2^ml_j] \times [0,h_j]$ according to \cref{lem:UnitCell} (with $\delta = \frac{1}{10}$) for $j \in\{0,1,\dots,j_0\}$, in particular $v^{(j)}:[0,2^ml_j] \times [0,h_j] \rightarrow \textup{Sym}(\R^2, m-1)$ is such that it satisfies the boundary conditions from \cref{lem:UnitCell}. Moreover, let $v^{(j_0+1)}$ be the map defined on $[0,2^m l_{j_0+1}] \times [0,\frac{\theta^{j_0+1}}{2}]$ according to \cref{lem:UpperCutOff}. In the following we will write $h_{j_0+1} = \frac{\theta^{j_0+1}}{2}$.
                            
We extend $v^{(j)}$ for $j\in \{0,1,\dots,j_0+1\}$ (without relabelling) onto $[0,+\infty] \times [0,h_j]$ as
\begin{align*}
v^{(j)}(x,y):=-v^{(j)}(x-2^{m+m'}l_j,y), \quad \text{for } x\in[2^{m+m'}l_j,2^{m+m'+1}l_j], \ m'\in\N.
\end{align*}
Notice that $v^{(j)}(\alpha2^{m}l_j,y)=v^{(j)}(2^ml_j,y)=0$ for every $\alpha\in\N$.
Thus $v^{(j)}(\beta,y)=0$ for every $\beta\in\N$, in particular $v^{(j)}(1,y)=0$.

			With this we define $v$ on the upper half $\Omega_+$ as follows:
			\begin{align*}
				v(x,y) = (-1)^{j} v^{(j)}(x,y-y_j), \quad \text{for } (x,y) \in [0,1]\times[y_j,y_{j+1}].
			\end{align*}

We have that $v^{(j)}(x,0) = - v^{(j-1)}(x,h_{j-1})$ for $x\in[0,1].$
Indeed, by \cref{lem:UnitCell,lem:UpperCutOff}, this is true for $x \in [0,2^ml_j]$.
Assuming that it is true for $x\in[0,2^{m+m'}l_j]$, by definition of $v^{(j)}$, and using the fact that $l_{j-1}=2l_j$, we infer that, for $x\in[2^{m+m'}l_j,2^{m+m'+1}l_j]$ we have 
\begin{align*}
v^{(j-1)}(x,h_{j-1}) 
&= - v^{(j-1)}(x-2^{m+m'-1}l_{j-1},h_{j-1}) = - v^{(j-1)}(x-2^{m+m'}l_j,h_{j-1})\\
& = v^{(j)}(x-2^{m+m'}l_j,0) = -v^{(j)}(x,0).
\end{align*}
Moreover, as already pointed out, it holds $v^{(j)}(0,y) = v^{(j)}(1,y) = 0$; 
                        thus we can extend $v(x,y) = 0$ for $x < 0$ and $x>1$. Finally, as $v^{(j_0+1)}(x,h_{j_0+1}) = 0$, we can deduce $v(x,1) = (-1)^{j_0+1} v^{(j_0+1)}(x,h_{j_0+1}) = 0$, and thus we can set $v(x,y) = 0$ for $y \geq 1$.

Now let us turn to proving the energy estimate. We denote now
\begin{align*}
\omega_{j,k} := (k2^ml_j,y_j) + [0,2^ml_j] \times [0,h_j], \  k \in \{ 0,1,\dots,N2^{j-m}-1\}, \ j \in \{ 0,1,\dots,j_0\},
\end{align*}
and
\begin{align*}
  \omega_{j_0+1,k} & := (k2^ml_{j_0+1},y_{j_0+1}) + [0,2^ml_{j_0+1}] \times [0,h_{j_0+1}].
\end{align*}
                        Setting $f = \p_1 \tilde{v}_0$, $u = \Ds v + F$, which is $\A$-free, and $\chi = f (A-B) + F$, we have
				\begin{align*}
                                  E_\epsilon (u,\chi;\omega_{j,k}) &= E_\epsilon\Big((-1)^{j} \Ds v^{(j)}+F,(-1)^j\p_1 \tilde{v}_0^{(j)} e_1^{\odot m} + F;[0,2^ml_j] \times [0,h_j]\Big) \\
			      &= E_\epsilon\Big(\Ds v^{(j)}+F,\p_1 \tilde{v}_0^{(j)} e_1^{\odot m} + F;[0,2^ml_j] \times [0,h_j]\Big) \\
                                  & \leq C(m) (\frac{l_j^{2m+1}}{h_j^{2m-1}} + \epsilon h_j),
				\end{align*}
				where we used the estimates in \cref{lem:UnitCell} and \cref{lem:UpperCutOff} for the corresponding index $j$.
			
			By symmetry we can repeat the same construction in $\Omega_-$ by replacing $e_2$ with $-e_2$. Therefore we obtain an $\A$-free map $u$ attaining the exterior data $u = F$ outside
			$\Omega$ and an associated phase indicator $\chi$.  Moreover, it holds
			\begin{align*}
				E_{\epsilon}(u,\chi) & \leq 2 \sum_{j=0}^{j_0+1} \sum_{k=0}^{N 2^{j-m}-1} \Big(E_\epsilon(u,\chi; \omega_{j,k}) + \epsilon Per(\omega_{j,k})\Big) \leq 2 C \sum_{j=0}^{j_0+1} \sum_{k=0}^{N 2^{j-m}-1} (\frac{l_j^{2m+1}}{h_j^{2m-1}} + \epsilon h_j) \\
				& = 2 C \sum_{j=0}^{j_0+1} N 2^{j-m} (\frac{l_j^{2m+1}}{h_j^{2m-1}} + \epsilon h_j) = 2 C(m) \sum_{j=0}^{j_0+1} (\frac{l_j^{2m}}{h_j^{2m-1}} + \epsilon \frac{h_j}{l_j}) \\
				& \leq 2 C(m) \sum_{j=0}^{j_0+1} \Big(N^{-2m} (2^{2m} \theta^{2m-1})^{-j} + N (2\theta)^j \Big) \leq C(m,\theta)(N^{-2m} + \epsilon N),
			\end{align*}
			which concludes the argument.
		\end{proof}
		
		\subsection{Intermediate cases and proof of the upper bounds in \cref{thm:scaling_2D_new}}
                In this section, we use the construction from \cref{sec:highest} in order to deduce an analogous construction for the case of intermediate vanishing orders for which $M:=A-B = e_1^{\odot l} \odot e_2^{\odot (m-l)}$,
		where $l,m-l \neq 0$.

		In contrast to the highest vanishing order setting, mimicking the construction from above, we now are confronted with the fact that the term involving $\chi$ is paired with two different components of the potential $v$, i.e.,
		\begin{align*}
			E_{el}(\Ds v,\chi) \sim \int_\Omega |\p_1 \tilde{v}_{m-l} + \p_2 \tilde{v}_{m-l-1} -\tilde{\chi}_{m-l}|^2 + H(\Ds v) dx,
		\end{align*}
		since now $\chi$ is nondiagonal.
		
		In order to deal with this, we make the ansatz that $\tilde{v}_{m-l-1} = 0$ or $\tilde{v}_{m-l}=0$ (depending on whether $l$ or $m-l$ is larger)  and apply the construction from \cref{lem:UnitCell} to define the other components in the energy.
		Using then the corresponding equations as in the proof of \cref{lem:UnitCell}, we set all but the last of the components for a smaller or bigger index, respectively, to zero.
		
		\begin{lem} \label{lem:Intermediate} Let $m,l \in \N$ with $0\leq l \leq m$, $\Omega = (0,1)^2$ and let $A,B$ be such that $A-B = e_1^{\odot l } \odot e_2^{\odot (m-l)}$ and $F=\frac{1}{2}A+\frac{1}{2}B$.  Consider the operator $\A(D)$ given in \cref{eq:Operator} and let the energy $E_{\epsilon}$ be given as in \cref{lem:Branching}.  We define $L := \max \{l,m-l\}$.  Then for every $N \in 2^L\N, N \geq 4$, there
			exist a deformation $u: \R^2 \to \textup{Sym}(\R^2;m)$, a phase indicator $\chi:\R^2 \to \{A,B\}$ with $\A(D)u = 0$ in $\Omega$, $u = F$ outside $\Omega$, and a constant
				$C = C(m)>0$ such that for any $\epsilon >0$ it holds
			\begin{align*}
				E_{\epsilon}(u,\chi) \leq C(N^{-2L} + \epsilon N).
			\end{align*}
		\end{lem}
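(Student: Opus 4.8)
First I would normalise the problem. Subtracting the affine exterior datum we may assume $F_\lambda = 0$, and, since the higher order curl $\A(D)$ is natural under orthogonal changes of frame — so that $\A(D)u=0$ is equivalent to $\A(D)u^{R}=0$ for the reflection $R$ across the diagonal, under which $\Omega=(0,1)^2$ is invariant and $e_1^{\odot l}\odot e_2^{\odot(m-l)}$ is carried to $e_1^{\odot(m-l)}\odot e_2^{\odot l}$ — we may assume $l\geq m-l$, so that $L=l$. Set $M:=A-B=e_1^{\odot l}\odot e_2^{\odot(m-l)}$ and write $\chi=fM+F_\lambda$ with $f\in BV(\Omega;\{1-\lambda,-\lambda\})$. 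In the notation of \cref{rmk:Notation-2D} the only nonzero component of $M$ is $\tilde M_{m-l}=\binom{m}{l}^{-1}$, hence for $u=\Ds v$ one has $E_{el}(u,\chi;\Omega)\leq C(m)\int_\Omega\big(|\tilde u_{m-l}-\binom{m}{l}^{-1}f|^2+\sum_{k\neq m-l}|\tilde u_k|^2\big)\,dx$. Since $\tilde u_{m-l}=\frac{l}{m}\p_1\tilde v_{m-l}+\frac{m-l}{m}\p_2\tilde v_{m-l-1}$, the key idea — as flagged before the statement — is to impose $\tilde v_0=\dots=\tilde v_{m-l-1}=0$: this forces $\tilde u_0=\dots=\tilde u_{m-l-1}=0$ automatically and collapses the target identity to $\frac{l}{m}\p_1\tilde v_{m-l}=\binom{m}{l}^{-1}f$, which, as $l=L\geq 1$, is solved by installing a suitably rescaled copy of the profile that played the role of ``$\tilde v_0$'' in \cref{lem:UnitCell} into the slot $\tilde v_{m-l}$.

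Then I would run the upward recursion exactly as in \cref{lem:UnitCell}: for $k=m-l+1,\dots,m-1$, imposing $\tilde u_k=0$ determines $\tilde v_k(x,y)=-\frac{k}{m-k}\int_0^x\p_2\tilde v_{k-1}(t,y)\,dt$ (the coefficient $\frac{m-k}{m}$ is nonzero for $k\leq m-1$), leaving $\tilde u_m=\p_2\tilde v_{m-1}$ as the only error term. Since the chain from $\tilde v_{m-l}$ to $\tilde v_{m-1}$ now has only $l-1$ integration steps and the profile satisfies $|\p_2^k\tilde v_{m-l}|\leq C\,l_c\,h^{-k}$ on a cell $\omega=(0,l_c)\times(0,h)$, exactly as in \eqref{eq:BoundDerivative}, this yields $|\tilde u_m|\leq C\,l_c^{\,l-1}\Vert\p_2^l\tilde v_{m-l}\Vert_\infty\leq C\,l_c^{\,l}\,h^{-l}$, hence the cell estimate $E_{el}(u,\chi;\omega)\leq C\,l_c^{\,2l+1}h^{-(2l-1)}=C\,l_c^{\,2L+1}h^{-(2L-1)}$, while $E_{surf}(\chi;\omega)\leq Ch$ since the interfaces are the same curves (with the same off-set $\delta$) as before. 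The cut-off layer of \cref{lem:UpperCutOff} is adapted by replacing its potential with $\phi(y/h)\,\tilde f(x)\,e_1^{\odot(l-1)}\odot e_2^{\odot(m-l)}$, whose symmetrized derivative has $\tilde u_{m-l}$ as its unique nonzero component; on a closure layer with $h\leq l_c$ this again gives $E_{el}(u,\chi;\omega)\leq C\,l_c^3h^{-1}\leq C\,l_c^{\,2L+1}h^{-(2L-1)}$. As in \cref{lem:UnitCell,lem:UpperCutOff}, the potential vanishes for $x\in\{0,l_c\}$ — the $\gamma$-dependent contributions to $\int_0^{l_c}\tilde v_{m-l}(t,y)\,dt$ cancelling because the slabs $\omega_2,\omega_4$ have $y$-independent widths — and $M\in\ker\AA(e_1)$ lets us glue to $F_\lambda$ along the $e_1$-direction, so $u=\Ds v$ is $\A$-free and attains the exterior datum.

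Finally I would run the branching of \cref{lem:Branching} verbatim, now with the cell bound $E_\epsilon(u_j,\chi_j;(0,l_j)\times(0,h_j))\leq C\big(l_j^{\,2L+1}h_j^{-(2L-1)}+\epsilon h_j\big)$ in place of the exponent-$m$ one: taking $y_j=1-\theta^j/2$, $l_j=2^{-j}/N$, $h_j=y_{j+1}-y_j$ with $\theta\in(2^{-\frac{2L}{2L-1}},2^{-1})$ and cutting off at the maximal $j_0$ with $l_{j_0}<h_{j_0}$, the two resulting geometric series converge and sum to $E_\epsilon(u,\chi)\leq 2C\sum_{j=0}^{j_0+1}N2^j\big(l_j^{\,2L+1}h_j^{-(2L-1)}+\epsilon h_j\big)\leq C(m,\lambda)\big(N^{-2L}+\epsilon N\big)$, which is the claim.

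The hard part is none of the estimates, which are routine adaptations of \cref{lem:UnitCell,lem:UpperCutOff,lem:Branching}, but rather checking that the reduction is faithful: one must verify that the ansatz $\tilde v_0=\dots=\tilde v_{m-l-1}=0$ together with the upward recursion really does produce an $\A$-free field $u=\Ds v$ attaining the affine exterior datum, so that the off-diagonal two-well problem is genuinely reduced to the diagonal construction of \cref{lem:UnitCell}. The endpoint cases $l\in\{0,m\}$ need no separate argument, being exactly \cref{lem:Branching} after the diagonal reflection.
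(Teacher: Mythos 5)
The proposal is correct and produces the same family of constructions, but takes a genuinely different route from the paper. The paper's proof reuses \cref{lem:Branching} as a black box for tensors of order $L=l$: it invokes the already-established branching construction $w:\R^2\to\textup{Sym}(\R^2;l)$, lifts it to order $m$ by setting $\tilde u_k = c_k\,\tilde w_{k-(m-l)}$ for $k\geq m-l$ (with explicit $c_k$) and $\tilde u_k = 0$ otherwise, and verifies $\A$-freeness by the single algebraic identity
\begin{equation*}
[\A(D)u]_{12\dots 12} \;=\; \p_1^{m-l}\,[\bar\A(D)w]_{12\dots 12}\;=\;0
\end{equation*}
derived from \eqref{eq:Operator2d}. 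All of the delicate boundary-matching (continuity of the glued potential, attainment of the exterior datum, compatibility at cell corners) is thereby inherited from \cref{lem:Branching} without re-checking. Your proof, by contrast, re-derives the unit cell, cut-off, and branching constructions from scratch at the level of the potential $v$, by imposing $\tilde v_0=\dots=\tilde v_{m-l-1}=0$ and running the upward recursion from $\tilde v_{m-l}$. The two approaches are mathematically equivalent — the paper's lifted field is precisely $D^{\text{sym}}$ of a potential with $\tilde v_k$ proportional to the lower-order potential $\tilde w'_{k-(m-l)}$ — but yours is longer, and it re-exposes the verification that the potential glues across cell boundaries, which you only sketch (and which the paper handles once, inside \cref{lem:UnitCell}, and then never has to touch again). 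The trade-off is that your version makes the reduction visibly ``the same construction, just shifted by $m-l$ slots,'' whereas the paper's version is terser but relies on a coefficient computation the reader must unpack.

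One concrete inaccuracy: in your adapted cut-off layer you claim that the symmetrized derivative of $\phi(y/h)\,\tilde f(x)\,e_1^{\odot(l-1)}\odot e_2^{\odot(m-l)}$ has $\tilde u_{m-l}$ as its \emph{unique} nonzero component. This is false: since the only nonzero potential component is $\tilde v_{m-l}=\phi(y/h)\tilde f(x)\binom{m-1}{l-1}^{-1}$, the recursion $\tilde u_k=\frac{m-k}{m}\p_1\tilde v_k+\frac{k}{m}\p_2\tilde v_{k-1}$ gives \emph{two} nonzero components, $\tilde u_{m-l}=\frac{l}{m}\p_1\tilde v_{m-l}$ and $\tilde u_{m-l+1}=\frac{m-l+1}{m}\p_2\tilde v_{m-l}$, the second of which carries the $\phi'$ factor. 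This mirrors exactly the structure in \cref{lem:UpperCutOff}, and the extra term is bounded by $C\,l_c^2 h^{-2}$ so the estimate $E_{el}\leq C\,l_c^3 h^{-1}$ you assert does still hold; but the ``unique nonzero component'' statement should be corrected.
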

		
		\begin{proof}
			The idea is to reduce the order of the tensor such that effectively we can use \cref{lem:Branching}.  As before, without loss of generality, $F = 0$ and therefore
			$A = \frac{1}{2} e_1^{\odot l} \odot e_2^{\odot(m-l)}, B = -\frac{1}{2} e_1^{\odot l} \odot e_2^{\odot (m-l)}$.  In order to reduce the order, for  $\chi = f (A-B)$, we estimate
			\begin{align*}
				E_{el}(u,\chi) \leq C(m) \left( \sum_{k=0}^{m-l-1} \int_\Omega |\tilde{u}_k|^2 dx + \int_\Omega |\tilde{u}_{m-l} - f|^2 dx + \sum_{k=m-l+1}^m \int_\Omega |\tilde{u}_k|^2 dx \right),
			\end{align*}
			where $f(x) \in \{\pm \frac{1}{2}\}$. Since the case $l=m$ (and symmetrically $l=0$) has already been considered in \cref{lem:Branching}, we only present the construction for the setting $0<l<m$. In this case, the ansatz is to set $\tilde{u}_k = 0$ for either $k \leq m-l-1$ or for $k \geq m-l+1$ (depending on whether $l>m-l$ or $m-l>l$) and to use \cref{lem:Branching} for the remaining components.   

                        \smallskip 
                                               Without loss of generality let $l \geq m-l$, else change the roles of $x$ and $y$ and adapt the coefficients accordingly.
                        We invoke \cref{lem:Branching} for $m = l \neq 0$ to define $w: \R^2 \to \textup{Sym}(\R^2;l)$, $w = 0$ outside $\Omega$, and $f: \R^2 \to \{\pm \frac{1}{2}\}$ such that
			$f (A'-B'): \R^2 \to \{A',B'\}$ with $A'= \frac{1}{2} e_1^{\odot l},B' = -\frac{1}{2} e_1^{\odot l}$. 
			We then define
			$\tilde{u}_k := (-1)^{l-m} 2^{m-l} \frac{l!k!}{m!(k-(m-l))!} \tilde{w}_{k-(m-l)}$ for $k \geq m-l$ and $\tilde{u}_k=0$ else. 
                        Moreover, we set $\chi := f (A-B) = f e_1^{\odot l} \odot e_2^{\odot (m-l)}: \R^2 \to \{A,B\}$.  Then $u$ defined by $u_{1\dots12\dots2} = \tilde{u}_k$ fulfills $u=0$ outside $\Omega$ and defines indeed an $\A$-free map as by \cref{eq:Operator2d} (denoting by $\bar{A}(D)$ the corresponding operator for $l$-th order tensors):
			\begin{align*}
				[\A(D)u]&_{1212\dots12} = \sum_{k=0}^m \Big( (-1)^k 2^{-m} \binom{m}{k} \p_1^k \p_2^{m-k} \tilde{u}_k \Big) \\
				& = \sum_{k=m-l}^m (-1)^k 2^{-m} \binom{m}{k} \p_1^k \p_2^{m-k}(-1)^{l-m} 2^{m-l} \frac{l!k!}{m!(k-(m-l))!} \tilde{w}_{k-(m-l)} \\
                                & = \sum_{k=0}^l (-1)^k 2^{-l}\frac{l!}{(m-(k+m-l))!k!}\p_1^{k+m-l} \p_2^{m-(k+m-l)} \tilde{w}_{k} \\
				& = \p_1^{m-l} \sum_{k=0}^l (-1)^k 2^{-l} \binom{l}{k} \p_1^k \p_2^{l-k} \tilde{w}_k = \p_1^{m-l} [\bar{\A}(D)w]_{12\dots12} = 0.
			\end{align*} 
		
			Furthermore, it holds
			\begin{align*}
				E_{\epsilon}(u,\chi) &\leq C \left(\int_\Omega |\tilde{w}_0 - f|^2 dx + \sum_{k=1}^{l} \int_\Omega |\tilde{w}_k|^2 dx+ \epsilon \int_\Omega |\nabla f| \right) = C \tilde{E}_\epsilon(w,fe_1^{\odot l})\\
				& \leq C (N^{-2l} + \epsilon N),
			\end{align*}
			where $\tilde{E}_\epsilon(w,fe_1^{\odot l})$ denotes the corresponding energy for $l$ tensors.
                      Since $l \geq m-l$, this yields that
                        \begin{align*}
				E_\epsilon(u,\chi) \leq C \min\{ N^{-2l} + \epsilon N, N^{-2(m-l)} + \epsilon N\} = C(N^{-2L} + \epsilon N),
			\end{align*}
                        which therefore concludes the proof.
		\end{proof}
			We combine the estimates from \cref{lem:Branching,lem:Intermediate} into the proof of the upper bound scaling result.
		
		\begin{proof}[Proof of upper bounds in \cref{thm:scaling_2D_new}]
			To show the upper bounds, we use \cref{lem:Branching}, \cref{lem:Intermediate}, and optimize in $N$, thus we choose $N \sim \epsilon^{-\frac{1}{2L+1}}$ and therefore
			\begin{align*}
				E_\epsilon(\chi;F) \leq E_\epsilon(u,\chi) \leq C (N^{-2L} + \epsilon N) \leq C\epsilon^{\frac{2L}{2L+1}}.
			\end{align*}
		\end{proof}
		
	Last but not least, we remark that the results for the higher order curl also imply a corresponding result for the higher order divergence in $d=2$.

                  \begin{cor}[Higher order divergence]
                    Let $m, k \in \N$. Let $\Omega = (0,1)^2 \subset \R^2$ and $E_\epsilon^{\mathcal{B}}$ as in \cref{eq:energy_total_gen} for the $m$-th order divergence $\mathcal{B}(D) = \di^m$ as in \cref{eq:DivergenceOp}.
                    Consider $A,B \in \R^k \otimes \textup{Sym}(\R^2;m)$ such that $A-B = v \otimes e_1^{\odot l} \odot e_2^{\odot (m-l)}$ for $l \in \{0,1,\dots,m\}, v\in \R^k$ and $F = \frac{1}{2}A+\frac{1}{2}B$.
                    Then there exist $\chi \in BV(\Omega;\{A,B\})$ and $C = C(m) > 1$ such that for $L := m - \min\{l,m-l\} = \max\{l,m-l\}$  
                    \begin{align*}
                      E_\epsilon(\chi;F) \leq C \epsilon^{\frac{2L}{2L+1}}.
                    \end{align*}
                  \end{cor}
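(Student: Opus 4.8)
The plan is to exploit the fact that, in two spatial dimensions, the $m$-th order divergence $\mathcal{B}(D)=\di^m$ and the higher order curl $\A(D)$ from \eqref{eq:Operator} differ only by a rotation acting on the target, and to transfer the branching construction underlying the upper bounds in \cref{thm:scaling_2D_new} through this identification.

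\emph{Step 0 (reduction to the scalar case).} Since $E^{\mathcal{B}}_\epsilon(\chi;F_\lambda)$ depends only on $A-B$ and $\lambda$ (subtract the affine datum $F_\lambda$, so that $\chi-F_\lambda$ takes values in $\{(1-\lambda)(A-B),-\lambda(A-B)\}$ and competitors $u-F_\lambda$ range over the $\B$-free fields vanishing outside $\overline\Omega$), I would assume $F_\lambda=0$. Then $A,B$ and all relevant competitors may be taken valued in $\vspan(v)\otimes\textup{Sym}(\R^2;m)$, and dividing by $|v|$ reduces everything to the scalar case $k=1$ with $A-B=e_1^{\odot l}\odot e_2^{\odot(m-l)}$, at the cost of a fixed factor $|v|^2$ (resp.\ $|v|$) in the elastic (resp.\ surface) energy.

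\emph{Step 1 (the rotation dictionary).} Let $R\in SO(2)$ be rotation by $\pi/2$, $Re_1=e_2$, $Re_2=-e_1$, and let $\mathcal{R}\colon\textup{Sym}(\R^2;m)\to\textup{Sym}(\R^2;m)$ be the induced linear automorphism, determined by $\mathcal{R}(a_1\odot\dots\odot a_m)=(Ra_1)\odot\dots\odot(Ra_m)$. Using \cref{lem:WaveCone} and \cref{rmk:Divergence} together with the elementary fact that in $\R^2$ one has $a\cdot\xi=0$ iff $a\in\vspan(R\xi)$, I would first check that for every $\xi\in\R^2\setminus\{0\}$
\[
\ker\mathbb{B}(\xi)=\mathcal{R}\,\ker\AA(\xi).
\]
Passing to Fourier space, this says that $u$ is $\B$-free if and only if $\mathcal{R}^{-1}u$ is $\A$-free; since $\mathcal{R}$ acts pointwise on the target, it also respects exterior data ($u=G$ in $\R^2\setminus\overline\Omega$ iff $\mathcal{R}^{-1}u=\mathcal{R}^{-1}G$ there) and sends phase indicators valued in $\{A,B\}$ to phase indicators valued in $\{\mathcal{R}^{-1}A,\mathcal{R}^{-1}B\}$.

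\emph{Step 2 (transfer and pull back).} From $R^{-1}e_1=-e_2$, $R^{-1}e_2=e_1$ one computes $\mathcal{R}^{-1}(A-B)=(-1)^{l}\,e_1^{\odot(m-l)}\odot e_2^{\odot l}$; writing $l':=m-l$, the parameter $\max\{l',m-l'\}$ equals $L=\max\{l,m-l\}$, the sign being immaterial (absorbed by interchanging the two wells and $\lambda\leftrightarrow1-\lambda$). Thus the transferred problem is precisely a two-well problem for the higher order curl on $(0,1)^2$ with well difference (a unimodular multiple of) $e_1^{\odot l'}\odot e_2^{\odot(m-l')}$, to which \cref{lem:Branching} (if $l'\in\{0,m\}$) or \cref{lem:Intermediate} (otherwise) applies: for every $N\ge4$ there are an $\A$-free $w$, equal to $\mathcal{R}^{-1}F_\lambda$ outside $(0,1)^2$, and $\chi'=\tilde f\,\mathcal{R}^{-1}(A-B)+\mathcal{R}^{-1}F_\lambda$ with $E_\epsilon^{\A}(\chi';\mathcal{R}^{-1}F_\lambda)\le C(m,\lambda)(N^{-2L}+\epsilon N)$. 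Setting $u:=\mathcal{R}w$ and $\chi:=\mathcal{R}\chi'=\tilde f(A-B)+F_\lambda\in BV((0,1)^2;\{A,B\})$, Step 1 gives $\B(D)u=0$ and $u=F_\lambda$ outside; since $\mathcal{R}$ is a fixed isomorphism, $\int_{(0,1)^2}|u-\chi|^2\,dx\le C(m)\int_{(0,1)^2}|w-\chi'|^2\,dx$, while $\int_{(0,1)^2}|\nabla\chi|=\int_{(0,1)^2}|\nabla\chi'|$ because $|e_1^{\odot l}\odot e_2^{\odot(m-l)}|=|e_1^{\odot l'}\odot e_2^{\odot(m-l')}|$. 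Hence $E_\epsilon^{\mathcal{B}}(\chi;F_\lambda)\le E_\epsilon(u,\chi)\le C(m,\lambda)(N^{-2L}+\epsilon N)$, and optimizing with $N\sim\epsilon^{-1/(2L+1)}$ gives $E_\epsilon^{\mathcal{B}}(\chi;F_\lambda)\le C\epsilon^{\frac{2L}{2L+1}}$.

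\emph{Main obstacle.} The only genuinely new point is Step 1: pinning down the precise linear identification $\mathcal{R}$ on $\textup{Sym}(\R^2;m)$ intertwining $\di^m$ and $\A(D)$, which requires some care with the symmetrization conventions (the inner product on $\textup{Sym}(\R^2;m)$ is weighted, so $\mathcal{R}$ need not be an isometry, but it is a fixed isomorphism, which is all that is needed). Everything else—the reduction to $k=1$ and the harmless change of constants under $\mathcal{R}$—is routine bookkeeping.
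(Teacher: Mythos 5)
Your proposal is correct and structurally parallel to the paper's: both proofs reduce to $k=1$, conjugate the $\mathcal{B}$-free problem to an $\A$-free problem by a fixed linear automorphism of the target $\textup{Sym}(\R^2;m)$, invoke Lemma \ref{lem:Intermediate} (resp.\ Lemma \ref{lem:Branching}), and transfer back. The difference lies only in the choice of that automorphism. The paper writes down the explicit componentwise re-indexing $\tilde u_{m-j}=\alpha(m,j)\tilde u'_j$ with $\alpha(m,j)=(-1)^j2^{-m}\binom{m}{j}$, which is designed so that $\di^m u=[\A(D)u']_{12\dots12}$ as an identity of operators; this makes the intertwining manifest without any separate kernel computation, at the price of being somewhat opaque. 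You instead take the automorphism $\mathcal{R}$ induced on $\textup{Sym}(\R^2;m)$ by the $\pi/2$ rotation $R\in SO(2)$, and verify via Lemmas \ref{lem:WaveCone} and \ref{rmk:Divergence} that $\ker\mathbb{B}(\xi)=\mathcal{R}\ker\AA(\xi)$ for all $\xi$, which (passing to Fourier) is equivalent to the statement that $u$ is $\mathcal{B}$-free iff $\mathcal{R}^{-1}u$ is $\A$-free — this is exactly the precise form of the informal remark made in the paper right after Lemma \ref{lem:LowerScalingDiv}, that in $d=2$ the curl and divergence differ by a rotation. Your route is more conceptual and has a small technical bonus: $\mathcal{R}$, being induced by an orthogonal map, is an isometry for the Frobenius norm on $\textup{Sym}(\R^2;m)$, so the elastic and surface energies transform with constant exactly $1$ rather than an $m$-dependent constant. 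The bookkeeping points you flag (the sign $(-1)^l$ being absorbed by swapping the wells and $\lambda\leftrightarrow 1-\lambda$; $\mathcal{R}$ commuting with the Fourier transform since it acts only on the target; exterior data being preserved) are all handled correctly. Both approaches are essentially equally short; yours better explains \emph{why} the scaling for $\di^m$ matches that of the higher order curl in two dimensions, while the paper's avoids appealing to the wave-cone lemmata at all.
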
 
                  \begin{proof}
                    First we notice that, without loss of generality we can assume $k=1$ by working componentwise and moreover, using the notation of \cref{rmk:Notation-2D}, $\di^m u = \sum_{j=0}^m \p_1^j \p_2^{m-j} \tilde{u}_{m-j}$.
                    The idea is to use the $\A$-free setting for $\A$ as in \cref{lem:Intermediate} and then transform $u',\chi'$ such that we are in the divergence-free setting.
                    To simplify the notation, set
                    \begin{align*}
                      \alpha(m,j) = (-1)^j 2^{-m} \binom{m}{j} \neq 0,
                    \end{align*}
                    such that $[\A(D)u']_{12\dots12} = \sum_{j=0}^m \alpha(m,j) \p_1^j \p_2^{m-j} \tilde{u}'_j$. 
                    By an application of \cref{lem:Intermediate}, for $N \in 2^L\N$, $A',B' \in \textup{Sym}(\R^2;m)$ and $F' = \frac{1}{2}A' + \frac{1}{2}B'$, where $A'$ and $B'$ are defined componentwise by $\tilde{A}'_j = \alpha(m,j)^{-1} \tilde{A}_{m-j}$, $\tilde{B}'_j = \alpha(m,j)^{-1} \tilde{B}_{m-j}$, and for $F' = \frac{1}{2}A' + \frac{1}{2}B'$ there exists $u':\R^2 \to \textup{Sym}(\R^2;m)$, $\chi':\R^2 \to \{A',B'\}$ such that $u' = F'$ outside $\Omega$ and $\A(D)u' = 0$.
                    We can apply \cref{lem:Intermediate} after a rescaling, as for the above definition of $A',B'$ we have $A'-B' = \alpha(m,l)^{-1} e_1^{\odot (m-l)} \odot e_2^{\odot l}$. This can be seen by considering $\widetilde{(A'-B')}_j = \tilde{A}'_j - \tilde{B}'_j= \alpha(m,j)^{-1} \cdot\widetilde{(A-B)}_{m-j}$ and using that $A-B = e_1^{\odot l} \odot e_2^{\odot (m-l)}$.
                    Moreover it holds 
                    \begin{align*}
                      E_\epsilon^{\A}(u',\chi') \leq C (N^{-2L} + \epsilon N)
                    \end{align*}
                    with a constant $C > 0$ only depending on $m$.
                    Setting now
                    \begin{align*}
                      \tilde{u}_{m-j} := \alpha(m,j)  \tilde{u}'_j, \ \tilde{\chi}_{m-j} := \alpha(m,j)  \tilde{\chi}'_j,
                    \end{align*}
                    we observe that
                    \begin{align*}
                      \di^m u = \sum_{j=0}^m \p_1^j \p_2^{m-j} \tilde{u}_{m-j} = [\A(D)u']_{12 \dots 12} = 0
                    \end{align*}
                    and that outside $\Omega$ we have
                    \begin{align*}
                      \tilde{u}_{m-j} = \alpha(m,j)  \tilde{u}'_j = \alpha(m,j) \tilde{F}'_j = \alpha(m,j) \Big( \frac{1}{2} \tilde{A}'_j + \frac{1}{2} \tilde{B}'_j \Big) =  \tilde{F}_{m-j}.
                    \end{align*}
                    Thus $u$ is admissible and $\tilde{\chi}_{m-j} = \alpha(m,j) \tilde{\chi}'_j \in \{\tilde{A}_{m-j},\tilde{B}_{m-j}\}$.
                    To bound the energy we notice that
                    \begin{align*}
                      |u-\chi|^2 = \sum_{j=0}^m \binom{m}{j} |\tilde{u}_j - \tilde{\chi}_j|^2 \leq C(m) |u'-\chi'|^2, \quad |\nabla \chi| \leq C(m) |\nabla \chi'|,
                    \end{align*}
                    and therefore $E_\epsilon^{\mathcal{B}}(u,\chi) \leq C(m) E_\epsilon^{\A}(u',\chi') \leq C(m) (N^{-2L} + \epsilon N)$.
                    Choosing $N \sim \epsilon^{-\frac{1}{2L+1}}$ concludes the proof.
                  \end{proof}

                  \section*{Acknowledgments}
                  A.R. and A.T. acknowledge funding by the Deu\-tsche For\-schungs\-ge\-mein\-schaft (DFG, German Research Foundation) through SPP 2256, pro\-ject ID 441068247. 
                  All authors were partially supported by the Hausdorff Institute for Mathematics at the University of Bonn which is funded by the Deutsche Forschungsgemeinschaft (DFG, German Research Foundation) under Germany's Excellence Strategy – EXC-2047/1 – 390685813, as part of the Trimester Program on Mathematics for Complex Materials. A.R. and C.T. are supported by the Hausdorff Center for Mathematics which is funded by the Deutsche Forschungsgemeinschaft (DFG, German Research Foundation) under Germany's Excellence Strategy – EXC-2047/1.
              
		\bibliographystyle{alpha} \bibliography{references.bib}
		
	\end{document}